\documentclass[11pt]{article}

\usepackage[a4paper,margin=1in]{geometry}
\usepackage[T1]{fontenc}
\usepackage[utf8]{inputenc}
\usepackage{lmodern}
\usepackage{amsmath,amsthm,amssymb,mathtools}
\usepackage{booktabs,multirow,makecell,array}
\usepackage{authblk}
\usepackage{enumitem}
\usepackage{xcolor}
\usepackage{listings}
\usepackage{graphicx}
\usepackage{float}
\usepackage[numbers,sort&compress]{natbib}
\usepackage{hyperref}

\hypersetup{
  colorlinks=true,
  linkcolor=blue,
  citecolor=blue,
  urlcolor=blue,
  breaklinks=true,
  pdftitle={Global small-hole minimization of the first Dirichlet eigenvalue in a square with two hard obstacles},
  pdfauthor={Baruch Schneider; Diana Schneiderova; Yifan Zhang},
  pdfkeywords={Dirichlet eigenvalue, obstacle placement, shape optimization, singular perturbation, polarization}
}

\newtheorem{theorem}{Theorem}[section]
\newtheorem{proposition}[theorem]{Proposition}
\newtheorem{lemma}[theorem]{Lemma}
\newtheorem{corollary}[theorem]{Corollary}

\title{Global small-hole minimization of the first Dirichlet eigenvalue in a square with two hard obstacles}

\author[1]{Baruch Schneider}
\author[1]{Diana Schneiderov\'a}
\author[1,2,3]{Yifan Zhang}

\affil[1]{Department of Mathematics, University of Ostrava, Ostrava, Czech Republic}
\affil[2]{Department of Algebra, Faculty of Mathematics and Physics, Charles University, Prague, Czech Republic}
\affil[3]{Department of Applied Mathematics, VSB -- Technical University of Ostrava, Ostrava, Czech Republic}

\date{}

\begin{document}
\maketitle

\begin{abstract}
We study the global minimization of the first Dirichlet eigenvalue of a square containing two equal non-overlapping circular obstacles as their common radius $r$ tends to zero.  A capacitary localization theorem first shows that every obstacle in a configuration whose eigenvalue excess above $\pi^2/2$ is $O(r^4)$ must lie in a fixed $O(r)$ corner layer; this excludes interior, open-side, and intermediate boundary scales.  We then analyze the resulting corner cells.  The correct leading functional is the full $u$-capacity, containing both the exterior harmonic corrector energy and the polynomial energy inside the reflected obstacles.  Its monotonicity forces asymptotic true-corner tangency, while a direct quantitative capacity estimate excludes two-hole clustering at one corner.  For holes near distinct corners, an exact polarization argument shows that a diagonally opposite placement is strictly improved by reflecting one obstacle to an adjacent corner.  Hence every unrestricted global minimizer is asymptotic, up to the symmetries of the square and interchange of the holes, to a pair of disks tangent at adjacent corners, in the sense that the center errors are $o(r)$.  We also obtain the leading $r^4$ expansion of the global minimum.  A reproducible finite element study provides a qualitative validation of the representative branch ordering.
\end{abstract}

\noindent\textbf{Keywords:} Dirichlet eigenvalue; obstacle placement; shape optimization; singular perturbation; polarization; finite element method

\smallskip
\noindent\textbf{MSC (2020):} 35P15, 49Q10, 35J05, 65N30

\section{Introduction}

The first Dirichlet eigenvalue is one of the basic spectral quantities attached to a bounded domain. In mechanical and acoustic models it determines the fundamental frequency of a clamped membrane or an absorbing cavity, in diffusion it governs the slowest exponential decay rate, and in quantum confinement it represents the ground-state energy under hard-wall constraints. If the ambient domain is fixed but contains movable hard inclusions, one is naturally led to an obstacle-placement problem for this fundamental spectral quantity.

For a single obstacle, this point of view was developed by Harrell--Kr\"oger--Kurata~\cite{HKK2001}, who showed for a class of convex symmetric domains that a minimizing obstacle position must touch the boundary, while maximizing positions are constrained by the symmetry of the ambient domain.  For unions of congruent disks with variable centers, Birgin--Fernandez--Haeser--Laurain~\cite{BirginEtAl2023} established existence, derived shape sensitivities, and developed numerical optimization procedures.  In a different small-volume problem, where the obstacle shape is free, Noris--Siclari--Verzini~\cite{NorisSiclariVerzini2025} proved localization near boundary points minimizing the gradient of the unperturbed ground state.

The problem considered here has a different singular mechanism.  The obstacles remain circular, the ambient boundary is polygonal, and the square ground state has a quadratic zero at each corner.  Two defects may also interact on the same corner scale.  The distinctive points of the present analysis are therefore the full corner-cell capacity, a quantitative exclusion of same-corner clustering, an exact adjacent-versus-opposite polarization comparison, and an unrestricted global $r^4$ asymptotic.  Besides its intrinsic shape-optimization interest, the multi-obstacle problem is a simple model for the arrangement of rigid inclusions, voids, blocked regions, or impenetrable defects in confined media.

Throughout the paper,
\[
Q=(-1,1)^2,
\qquad
\mathcal C_r:=\Bigl\{(x_1,x_2)\in\bigl([-1+r,1-r]^2\bigr)^2:\ |x_1-x_2|\ge 2r\Bigr\},
\]
and
\[
\Lambda_r(x_1,x_2):=\lambda_1\Bigl(Q\setminus\bigl(\overline{B_r(x_1)}\cup \overline{B_r(x_2)}\bigr)\Bigr).
\]
Thus the closed disks may touch each other or the outer boundary, but their interiors do not overlap.

\begin{lemma}[Existence and continuity at tangency]\label{lem:existence-continuity}
For every $r>0$ for which $\mathcal C_r$ is nonempty, the map $\Lambda_r$ is continuous on the compact set $\mathcal C_r$.  In particular, the global minimum and all restricted minima used below are attained.
\end{lemma}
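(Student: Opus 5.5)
The plan is to prove continuity by squeezing the eigenvalue between a two-sided bound: an upper semicontinuity estimate obtained from a trial function, and a lower semicontinuity estimate obtained through Mosco (or $\gamma$-) convergence of the perforated domains. Throughout, write $K(x_1,x_2)=\overline{B_r(x_1)}\cup\overline{B_r(x_2)}$ and $\Omega(x_1,x_2)=Q\setminus K(x_1,x_2)$. First note that $\mathcal C_r$ is compact: it is a closed subset of the compact set $([-1+r,1-r]^2)^2$, being cut out by the closed condition $|x_1-x_2|\ge 2r$. Moreover, $\Omega(x_1,x_2)$ is a nonempty open set for every admissible pair, since the two disks have total area $2\pi r^2<4$ whenever $\mathcal C_r\neq\emptyset$. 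Hence $\Lambda_r$ is finite and lies between $\pi^2/2$ (by domain monotonicity) and the first eigenvalue of any fixed ball contained in $\Omega$. Once continuity is established, attainment of the global minimum and of all restricted minima over closed subsets of $\mathcal C_r$ follows from Weierstrass.

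\textbf{Upper semicontinuity.} Take $(x_1^n,x_2^n)\to(x_1,x_2)$ in $\mathcal C_r$, and fix $\varepsilon>0$. Choose $\varphi\in C_c^\infty(\Omega(x_1,x_2))$ whose Rayleigh quotient is within $\varepsilon$ of $\Lambda_r(x_1,x_2)$; this is possible because $C_c^\infty$ is dense in $H_0^1$. The support of $\varphi$ is a compact subset of the open set $Q\setminus K$, so it has positive distance from $K$ and from $\partial Q$. For $n$ large, $K(x_1^n,x_2^n)$ lies in a small neighbourhood of $K$ (Hausdorff convergence of the disk unions), so $\operatorname{supp}\varphi\subset\Omega(x_1^n,x_2^n)$. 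Therefore $\limsup\Lambda_r(x^n)\le\Lambda_r(x)+\varepsilon$.

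\textbf{Lower semicontinuity.} Let $u_n\in H_0^1(\Omega(x^n))$ be the normalized first eigenfunctions, extended by zero to $Q$. By the uniform bound on $\Lambda_r$ from above, they are bounded in $H_0^1(Q)$. Pass to a subsequence with $u_n\rightharpoonup u$ weakly in $H_0^1(Q)$ and strongly in $L^2(Q)$, so that $\|u\|_2=1$ and $\int|\nabla u|^2\le\liminf\Lambda_r(x^n)$. It then remains to show $u\in H_0^1(\Omega(x))$. Since $u_n=0$ a.e. on $K(x^n)$, passing to an a.e. convergent subsequence gives $u=0$ a.e. on the interior of every limit set, and in particular $u=0$ a.e. on $K(x)$. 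The key point is then a regularity fact about the obstacle set: a function in $H^1(Q)\cap H_0^1(Q)$ that vanishes a.e. on $K$ belongs to $H_0^1(Q\setminus K)$. This holds because $\Omega(x)$ is a finite union of Lipschitz-type pieces. Alternatively, and more robustly, one can use that $K$ equals the closure of its interior and that each disk satisfies a uniform exterior/interior cone condition, so that the $H^1$-stability (Keldysh stability) theorem applies. Either route yields $\Lambda_r(x)\le\int|\nabla u|^2\le\liminf\Lambda_r(x^n)$.

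\textbf{Expected main obstacle.} I expect the main difficulty to lie precisely at tangency, meaning configurations where the disks touch each other or touch $\partial Q$. There $\Omega(x)$ is not Lipschitz near the contact point: it has a cusp, and $\Omega(x)$ may even be disconnected. The claim "vanishing a.e. on $K$ implies $H_0^1(Q\setminus K)$" must therefore be checked directly. My first approach would be to cover $K$ by the two closed disks individually. A function $v\in H^1(\mathbb R^2)$ vanishing a.e. on a closed disk $D$ has quasi-continuous representative vanishing quasi-everywhere on $D$: at each boundary point the disk has positive density, so Lebesgue points force the value zero in the fine/capacitary sense. Hence $v\in H_0^1(\mathbb R^2\setminus D)$ by Hedberg's characterization, namely that $v$ lies in $H_0^1(U)$ if and only if its quasi-continuous representative vanishes q.e. off $U$. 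The cusp geometry is then irrelevant, since Hedberg's criterion is local and capacitary rather than geometric. Applying this to both disks together with $\partial Q$ gives the required membership $u\in H_0^1(\Omega(x))$, which closes the argument.
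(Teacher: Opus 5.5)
Your proposal is correct and is essentially the paper's argument. Your upper-semicontinuity step is the paper's Mosco recovery condition (compactly supported test functions plus Hausdorff convergence of the disk unions), and your lower-semicontinuity step is its weak-limit condition (a.e.\ vanishing on the limiting disks, followed by the quasi-everywhere characterization of $H_0^1(Q\setminus K)$, which does not see the tangency cusps).

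The only differences are minor. You work directly with first eigenfunctions and the Rayleigh quotient instead of invoking Mosco convergence plus min--max, and you make the disk-boundary step explicit via positive density at Lebesgue points. Note that it is this Hedberg argument, not your earlier ``Lipschitz-type pieces'' remark, that is valid at tangency.
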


\begin{proof}
Let $(x_{1,n},x_{2,n})\to(x_1,x_2)$ in $\mathcal C_r$, write
\[
 K_n:=\overline{B_r(x_{1,n})}\cup\overline{B_r(x_{2,n})},
 \qquad
 K:=\overline{B_r(x_1)}\cup\overline{B_r(x_2)},
\]
and set $\Omega_n:=Q\setminus K_n$ and $\Omega:=Q\setminus K$.  We verify the two Mosco conditions for the zero extensions of $H_0^1(\Omega_n)$ in $H_0^1(Q)$.

For the recovery condition, first take $\varphi\in C_c^\infty(\Omega)$.  Its support has positive distance from $K$, and the Hausdorff convergence $K_n\to K$ implies $\varphi\in H_0^1(\Omega_n)$ for all sufficiently large $n$.  Density of $C_c^\infty(\Omega)$ in $H_0^1(\Omega)$ and a diagonal argument give a strongly convergent recovery sequence for every element of $H_0^1(\Omega)$.

Conversely, suppose that the zero extensions $u_n\in H_0^1(\Omega_n)$ converge weakly in $H_0^1(Q)$ to $u$.  Every compact subset of the interior of either limiting disk is contained in $K_n$ for all sufficiently large $n$, so $u=0$ almost everywhere in the interiors of the disks.  An $H^1$ function that vanishes on the interior of a Lipschitz disk has zero quasi-continuous trace on its closure.  The only additional points created by disk--disk or disk--boundary tangency are isolated and have zero planar $H^1$-capacity.  Hence the quasi-continuous representative of $u$ vanishes quasi-everywhere on $K$, which is the standard characterization of $H_0^1(Q\setminus K)$ inside $H_0^1(Q)$.  Thus $u\in H_0^1(\Omega)$.

We have proved $H_0^1(\Omega_n)\to H_0^1(\Omega)$ in the Mosco sense.  This elementary argument is consistent with the general nonsmooth-domain criteria of~\cite{FornoniRondi2023}.  The min--max characterization yields convergence of the first eigenvalues, and compactness of $\mathcal C_r$ gives attainment.
\end{proof}

For a corner $p=(\varepsilon_1,\varepsilon_2)\in\{\pm1\}^2$ and a fixed $R>1$, set
\[
\mathcal N_r^R(p):=
\Bigl\{\bigl(\varepsilon_1(1-ar),\varepsilon_2(1-br)\bigr):
(a,b)\in[1,R]^2\Bigr\},
\]
and let $\mathcal C_r^{\rm cor}(R)$ be the subset of $\mathcal C_r$ for which each center belongs to one of the four sets $\mathcal N_r^R(p)$.  This is the compact corner-scale class addressed by the full two-hole asymptotic analysis.

\begin{theorem}[Global small-hole minimization]\label{thm:main-global}
Let
\[
 m_r:=\min_{(x_1,x_2)\in\mathcal C_r}\Lambda_r(x_1,x_2).
\]
As $r\to0$,
\[
 m_r
 =
 \frac{\pi^2}{2}
 +\frac{\pi^4}{8}\,\mathcal C(1,1)\,r^4
 +o(r^4),
\]
where $\mathcal C(1,1)$ is the full one-hole true-corner cell capacity defined in Section~\ref{sec:same-corner}.
Moreover, if $(x_{1,r},x_{2,r})$ is any global minimizer of $\Lambda_r$ over $\mathcal C_r$, then, after relabeling the centers and applying a symmetry of the square if necessary,
\[
 x_{1,r}=(-1+r,1-r)+o(r),
 \qquad
 x_{2,r}=(1-r,1-r)+o(r).
\]
Thus every unrestricted global minimizing sequence approaches, at the obstacle scale, two true-corner configurations at adjacent corners.
\end{theorem}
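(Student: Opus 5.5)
The plan is to assemble the theorem from four ingredients that the abstract announces and the later sections establish: capacitary localization, the corner-cell capacity functional with its monotonicity, exclusion of same-corner clustering, and the adjacent-versus-opposite polarization comparison. The proof has an upper bound and a lower bound, followed by an identification of minimizers.

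\textbf{Upper bound.} Take the explicit competitor with two disks tangent at adjacent corners,
\[
x_1=(-1+r,1-r),\qquad x_2=(1-r,1-r).
\]
The unperturbed ground state is $u_0=\tfrac12\cos(\pi x/2)\cos(\pi y/2)$, normalized in $L^2(Q)$. Near a corner it behaves like $\tfrac{\pi^2}{8}\,st$ in local coordinates $(s,t)$ measured from the corner. After rescaling by $r$, the eigenvalue shift from one hole becomes the $u$-capacity of the unit disk centered at $(1,1)$ in the reflected quarter-plane cell. This quantity is $\mathcal C(1,1)$ multiplied by the squared coefficient $(\pi^2/8)^2$ and by $r^4$. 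Working out the normalization should give a single-hole contribution of $\tfrac{\pi^4}{16}\mathcal C(1,1)r^4$; this is the step where the constant has to be checked. The two holes sit at distance $O(1)$ from each other, so their correctors decouple up to $o(r^4)$, and the contributions add to $\tfrac{\pi^4}{8}\mathcal C(1,1)r^4$. To make this rigorous, insert $u_0\,(1-\chi_r)$ as a test function in the Rayleigh quotient, where $\chi_r$ is the rescaled capacitary corrector cut off at scale $\sqrt r$. The cutoff errors are then $o(r^4)$.

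\textbf{Lower bound and localization.} The upper bound shows $m_r-\pi^2/2=O(r^4)$. By the capacitary localization theorem, every minimizer therefore has both centers in $\mathcal C_r^{\rm cor}(R)$ for some fixed $R$. There are three cases.
\begin{enumerate}[label=(\roman*)]
\item \emph{Both holes at the same corner.} The quantitative clustering estimate gives an excess of at least $\bigl(\tfrac{\pi^4}{8}\mathcal C(1,1)+\delta\bigr)r^4$ for some $\delta>0$, which contradicts the upper bound.
\item \emph{Holes at diagonally opposite corners.} The exact polarization argument produces a configuration with adjacent corners whose eigenvalue is strictly smaller. So an opposite placement can be a minimizer only if equality held, and the strict inequality rules this out.
\item \emph{Holes at distinct corners (adjacent or not).} The leading expansion is $\tfrac{\pi^4}{16}\bigl(\mathcal C(a_1,b_1)+\mathcal C(a_2,b_2)\bigr)r^4+o(r^4)$, uniformly on the compact class $[1,R]^2$.
\end{enumerate}
The uniformity in case (iii) comes from the corner-cell analysis through a $\Gamma$-type lower bound on the rescaled energies. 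Combined with case (i), this yields $m_r\ge \tfrac{\pi^2}{2}+\tfrac{\pi^4}{8}\mathcal C(1,1)r^4+o(r^4)$, because $\mathcal C$ is minimized at $(1,1)$ by monotonicity.

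\textbf{Identification of minimizers.} Let $(a_i,b_i)$ be the rescaled offsets of a minimizing family, and take any subsequence along which they converge to $(a_i^*,b_i^*)$. Matching the upper and lower bounds forces
\[
\mathcal C(a_1^*,b_1^*)+\mathcal C(a_2^*,b_2^*)=2\,\mathcal C(1,1).
\]
Strict monotonicity of $\mathcal C$ in each variable then gives $a_i^*=b_i^*=1$. This says exactly that the center errors are $o(r)$. The polarization result excludes the opposite pair, so after a symmetry of the square the two holes occupy adjacent corners.

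\textbf{Main obstacle.} The hardest step is the uniform lower bound in case (iii): showing that the $o(r^4)$ remainder is uniform over $\mathcal C_r^{\rm cor}(R)$. This includes tangent configurations, where the corrector has a singular contact point. My first attempt would use a two-sided Rayleigh quotient estimate built on the eigenfunction projected against $u_0$, together with a compactness argument for the rescaled corrector energies in the cell.
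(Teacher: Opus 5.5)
Your proposal is correct and follows the paper's proof essentially step for step: the adjacent true-corner competitor gives the $O(r^4)$ upper bound, Corollary~\ref{cor:two-hole-localization} places both centers in $\mathcal C_r^{\rm cor}(R)$, and Theorem~\ref{thm:same-corner-branch-complete} excludes same-corner pairs. The uniform two-sided expansion of Proposition~\ref{prop:separated-corner-additivity}, together with strict coordinatewise monotonicity of $\mathcal C$, then forces $(a_i,b_i)\to(1,1)$, and Corollary~\ref{cor:polarized-corner-layers} removes the opposite pair; the uniformity you flag as the main obstacle is supplied there by capacity additivity and Proposition~\ref{prop:boundary-simple-eigenvalue}.

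One slip in your constant bookkeeping: $\cos\frac{\pi x}{2}\cos\frac{\pi y}{2}$ is already $L^2(Q)$-normalized, with corner term $\frac{\pi^2}{4}st$, which gives $\frac{\pi^4}{16}\mathcal C(a,b)r^4$ directly. The factor $\frac12$ and the coefficient $\frac{\pi^2}{8}$ belong to the doubly reflected square $(-1,3)^2$, where they must be paired with the four-disk capacity $\mathfrak C(K_{a,b},XY)=4\mathcal C(a,b)$, not with $\mathcal C(a,b)$.
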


The proof separates localization from corner-cell optimization.  First, a one-hole capacitary estimate shows that any disk whose individual eigenvalue cost is $O(r^4)$ must lie in a fixed $O(r)$ neighborhood of a corner.  Domain monotonicity then localizes both disks of every global two-hole minimizer.  Once this global reduction is available, the remaining corner-scale problem is resolved by three ingredients: monotonicity of the full one-hole corner capacity, a strict exclusion of same-corner clusters, and exact polarization of opposite-corner pairs into adjacent ones.

The paper is organized as follows. Section~\ref{sec:global-localization} proves the global localization theorem. Section~\ref{sec:side-branch} treats the side-tangent one-hole branch and shows that its asymptotically best configuration approaches true corner tangency. Section~\ref{sec:polarization} proves by exact polarization that opposite-corner placements are strictly worse than their adjacent reflected counterparts. Section~\ref{sec:same-corner} introduces the corrected full corner capacities and excludes same-corner clusters. Section~\ref{sec:numerics} gives a reproducible finite element validation. Section~\ref{sec:distinct-global} treats distinct corner layers and completes the proof of Theorem~\ref{thm:main-global}.

\section{Global localization of low-cost obstacles}\label{sec:global-localization}

For a single disk set
\[
 \lambda_r^{(1)}(x):=
 \lambda_1\bigl(Q\setminus\overline{B_r(x)}\bigr),
 \qquad
 x\in Q_r:=[-1+r,1-r]^2.
\]
For a bounded Lipschitz domain $D$, a compact set $K\subset\overline D$, and $u\in H_0^1(D)$, we use the finite-domain $u$-capacity
\begin{equation}\label{eq:finite-u-capacity}
 \operatorname{Cap}_D(K,u):=
 \inf\left\{
 \int_D|\nabla V|^2:
 V\in H_0^1(D),\quad V-u\in H_0^1(D\setminus K)
 \right\},
\end{equation}
where, throughout this paper,
\[
 H_0^1(D\setminus K)
 :=\{w\in H_0^1(D):\widetilde w=0\text{ quasi-everywhere on }K\cap D\}
\]
and $\widetilde w$ denotes the quasi-continuous representative.  This convention permits contact with $\partial D$.  It agrees with the usual definition when $K\Subset D$; adding points of $K\cap\partial D$ does not change the class because functions in $H_0^1(D)$ already have zero trace there quasi-everywhere.  If $K$ is a finite union of closed disks, the minimizing potential agrees with $u$ almost everywhere in the portions of the disk interiors contained in $D$.  Consequently,
\begin{equation}\label{eq:capacity-interior-lower}
 \operatorname{Cap}_D(K,u)
 \ge \int_{K\cap D}|\nabla u|^2.
\end{equation}
We also use the simple-eigenvalue perturbation formula of~\cite[Theorem~1.4]{AFHL2019}: if $K_n\Subset D$ concentrates to a zero-capacity compact set and $u$ is the normalized eigenfunction of a simple limiting eigenvalue, then
\begin{equation}\label{eq:AFHL-sequence}
 \lambda_N(D\setminus K_n)-\lambda_N(D)
 =\operatorname{Cap}_D(K_n,u)
 +o\!\left(\operatorname{Cap}_D(K_n,u)\right).
\end{equation}
The theorem in~\cite{AFHL2019} is stated for interior compact sets.  The distinct-corner argument below involves sets that may touch the exterior boundary, so we record the needed boundary-concentrating extension.  The proof is included because the extension is functional-analytic rather than geometric and does not follow merely from changing the definition of capacity.

\begin{lemma}[Smallness of a boundary-concentrating capacitary potential]\label{lem:boundary-potential-L2}
Let $D\subset\mathbb R^2$ be a bounded connected Lipschitz domain, let $S\subset\partial D$ be finite, and let $K_n\subset\overline D$ be compact sets concentrating to $S$, in the sense that every relative neighborhood of $S$ in $\overline D$ contains $K_n$ for all sufficiently large $n$.  For $u\in H_0^1(D)$, let $V_n$ be the minimizer in~\eqref{eq:finite-u-capacity} and put
\[
 c_n:=\operatorname{Cap}_D(K_n,u)=\int_D|\nabla V_n|^2.
\]
If $c_n\to0$, then
\begin{equation}\label{eq:boundary-potential-L2-small}
 \|V_n\|_{L^2(D)}^2=o(c_n).
\end{equation}
\end{lemma}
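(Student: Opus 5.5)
Following the interior argument of~\cite{AFHL2019}, I would normalize and argue by contradiction. Set $W_n:=V_n/\sqrt{c_n}$, so that $\|\nabla W_n\|_{L^2(D)}=1$. The claim~\eqref{eq:boundary-potential-L2-small} is exactly $\|W_n\|_{L^2(D)}\to0$. By Rellich compactness on the bounded Lipschitz domain $D$, it suffices to show that every weak $H_0^1(D)$ limit $W$ of a subsequence of $(W_n)$ vanishes; strong $L^2$ convergence then gives the conclusion for the full sequence.

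\emph{Step 1: harmonicity away from $S$.} By minimality in~\eqref{eq:finite-u-capacity}, $V_n$ satisfies $\int_D\nabla V_n\cdot\nabla\varphi=0$ for all $\varphi\in H_0^1(D\setminus K_n)$. Fix $\varphi\in C_c^\infty(D)$ with support at positive distance from $S$. Its support together with a neighborhood of $\partial D$ near it does not meet $K_n$ for large $n$, because $K_n$ lies in any prescribed relative neighborhood of $S$. Hence $\varphi\in H_0^1(D\setminus K_n)$ eventually. Dividing by $\sqrt{c_n}$ and passing to the limit, $\int_D\nabla W\cdot\nabla\varphi=0$. Since $S$ is finite, every $\varphi\in C_c^\infty(D)$ can be approximated in $H^1$ by such test functions, using the standard logarithmic cutoffs around each point of $S$ (points have zero planar capacity). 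Therefore $W\in H_0^1(D)$ is weakly harmonic in all of $D$, and so $W=0$.

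The point I expect to need care is exactly this approximation near $S\subset\partial D$. Let $\eta_\delta$ be a cutoff equal to $0$ near $S$ and $1$ outside a $\delta$-neighborhood, with $\|\nabla\eta_\delta\|_{L^2}\to0$; this is possible in dimension two via $\log\log$ or $\log$ profiles. For fixed $\varphi$, the product $\eta_\delta\varphi\to\varphi$ in $H^1$: the term $\varphi\nabla\eta_\delta$ is small since $\varphi$ is bounded, and $\eta_\delta\nabla\varphi\to\nabla\varphi$ by dominated convergence. Applying the weak equation to $\eta_\delta\varphi$ and using $\|\nabla W\|\le1$ then closes the argument. The two-dimensional setting and the finiteness of $S$ are used precisely here.

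\emph{Step 2: conclusion.} Every subsequence of $(W_n)$ has a further subsequence converging weakly to $0$ in $H_0^1(D)$, hence strongly to $0$ in $L^2(D)$. Thus $\|V_n\|_{L^2}^2/c_n\to0$. The hypothesis $c_n\to0$ is not needed for the compactness argument itself; it only records the regime in which the statement is applied. Note that no assumption on $u$ near $S$ enters, because the argument only uses the Euler--Lagrange equation off $K_n$.
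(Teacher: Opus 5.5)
Your proof is correct and is essentially the paper's argument: both normalize the potentials, extract a weak $H_0^1(D)$ limit, show it is weakly harmonic because each $\varphi\in C_c^\infty(D)$ eventually lies in $H_0^1(D\setminus K_n)$, and conclude it vanishes. The paper normalizes by $\|V_n\|_{L^2}$ and argues by contradiction, whereas you normalize by $\sqrt{c_n}$ and argue directly. Your logarithmic-cutoff step is unnecessary, since the support of any $\varphi\in C_c^\infty(D)$ is automatically at positive distance from $S\subset\partial D$; so, contrary to your remark, neither the finiteness of $S$ nor two-dimensionality is used there.
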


\begin{proof}
Suppose otherwise.  After taking a subsequence, $c_n>0$ and
\[
 \|V_n\|_{L^2(D)}^2\ge C^{-1}c_n
\]
for some $C>0$.  Set $W_n:=V_n/\|V_n\|_{L^2(D)}$.  Then
\[
 \|W_n\|_{L^2(D)}=1,
 \qquad
 \int_D|\nabla W_n|^2\le C.
\]
After passing to a further subsequence, $W_n\rightharpoonup W$ in $H_0^1(D)$ and $W_n\to W$ in $L^2(D)$, so $\|W\|_2=1$.  The Euler--Lagrange equation for the capacity potential gives
\[
 \int_D\nabla V_n\cdot\nabla\varphi=0
 \qquad
 \text{for every }\varphi\in H_0^1(D\setminus K_n).
\]
If $\varphi\in C_c^\infty(D)$, then $\operatorname{supp}\varphi$ has positive distance from the finite boundary set $S$, and hence $\varphi\in H_0^1(D\setminus K_n)$ for all large $n$.  Passing to the limit shows that $W$ is weakly harmonic in $D$.  Since $W\in H_0^1(D)$, testing with $W$ gives $W=0$, contradicting $\|W\|_2=1$.
\end{proof}

\begin{proposition}[Simple-eigenvalue reduction for boundary-concentrating sets]\label{prop:boundary-simple-eigenvalue}
Let $D\subset\mathbb R^2$ be a bounded connected Lipschitz domain and let $K_n\subset\overline D$ be compact.  Write
\[
 \mathcal H_n:=H_0^1(D\setminus K_n)\subset H_0^1(D)
\]
with the convention in~\eqref{eq:finite-u-capacity}, and assume that $\mathcal H_n$ converges to $H_0^1(D)$ in the Mosco sense.  Let $\lambda=\lambda_N(D)$ be simple, let $u$ be an associated real $L^2(D)$-normalized eigenfunction, let $V_n$ be the $u$-capacitary potential of $K_n$, and set
\[
 c_n:=\operatorname{Cap}_D(K_n,u).
\]
If $c_n>0$ for all sufficiently large $n$ and
\begin{equation}\label{eq:boundary-reduction-assumptions}
 c_n\to0,
 \qquad
 \|V_n\|_{L^2(D)}=o(c_n^{1/2}),
\end{equation}
then
\begin{equation}\label{eq:boundary-simple-eigenvalue-expansion}
 \lambda_N(D\setminus K_n)
 =\lambda_N(D)+c_n+o(c_n).
\end{equation}
\end{proposition}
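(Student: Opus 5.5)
We follow the standard two-sided argument of Abatangelo--Felli--Hillairet--L\'ena, using as test function the capacitary correction $u-V_n\in\mathcal H_n$. Write $\lambda_n:=\lambda_N(D\setminus K_n)$, the $N$-th eigenvalue of the form $\int_D|\nabla w|^2$ on $\mathcal H_n$.

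\emph{Step 1 (upper bound via min--max).} Let $u_1,\dots,u_{N-1}$ be orthonormal eigenfunctions of $D$ below $\lambda$, and let $\Pi^n_j$ be their Mosco recovery sequences in $\mathcal H_n$. A cleaner route is to work directly with the projection: set $w_n:=u-V_n\in\mathcal H_n$. Using the Euler--Lagrange relation $\int\nabla V_n\cdot\nabla\varphi=0$ for $\varphi\in\mathcal H_n$ with $\varphi=w_n$, we get
\[
\int_D|\nabla w_n|^2=\lambda-c_n\cdot(-1)\ldots
\]
More precisely, $\int|\nabla w_n|^2=\int|\nabla u|^2-2\int\nabla u\cdot\nabla V_n+c_n$. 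Since $\int\nabla V_n\cdot\nabla(u-V_n)=0$, we have $\int\nabla u\cdot\nabla V_n=c_n$, and therefore
\[
\int|\nabla w_n|^2=\lambda-c_n .
\]
This identity has the wrong sign for an upper bound on $\int|\nabla w_n|^2$, so the eigenvalue shift must come from the $L^2$ norm and the Rayleigh quotient. We have $\int\nabla u\cdot\nabla V_n=\lambda\int uV_n$, hence $\int uV_n=c_n/\lambda$, and
\[
\|w_n\|_2^2=1-\frac{2c_n}{\lambda}+\|V_n\|_2^2 .
\]
Consequently
\[
\frac{\lambda-c_n}{1-2c_n/\lambda+o(c_n)}=\lambda+c_n+o(c_n),
\]
where the hypothesis $\|V_n\|_2^2=o(c_n)$ is used exactly here. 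For $N=1$ this already gives the upper bound. For general $N$ we apply min--max on $\mathrm{span}\{u^n_1,\dots,u^n_{N-1},w_n\}$, where $u^n_j$ are eigenfunctions of $D\setminus K_n$. Mosco convergence, together with the simplicity of $\lambda$, implies that the first $N-1$ eigenvalues converge and their eigenfunctions converge strongly to an orthonormal system orthogonal to $u$. The cross terms $\int\nabla u^n_j\cdot\nabla w_n=\lambda_j^n\int u^n_jw_n$ are then small. Writing the $N\times N$ generalized eigenproblem and using a Schur complement shows that the cross terms contribute $O(\text{cross}^2)$. The critical estimate is $\int u^n_j w_n=\int u^n_j u-\int u^n_jV_n$. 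The second term is $o(c_n^{1/2})$. For the first, $(\lambda-\lambda^n_j)\int u^n_ju=\int\nabla u^n_j\cdot\nabla V_n\cdot(\ldots)$, which we expect to give $O(c_n^{1/2})$ smallness, so that the squares are $o(c_n)$.

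\emph{Step 2 (lower bound).} Let $u_n$ be a normalized eigenfunction for $\lambda_n$. By Mosco convergence and simplicity, $u_n\to u$ strongly in $H_0^1(D)$ after a sign choice. Test the eigenvalue equation of $D$ against $u_n$ and the equation in $\mathcal H_n$ against $w_n$:
\[
\lambda_n\int u_nw_n=\int\nabla u_n\cdot\nabla w_n=\int\nabla u_n\cdot\nabla u,
\]
where the second equality holds because $\int\nabla u_n\cdot\nabla V_n=0$, as $u_n\in\mathcal H_n$. Also $\int\nabla u_n\cdot\nabla u=\lambda\int u_nu$. Hence
\[
(\lambda_n-\lambda)\int u_nu=\lambda_n\int u_nV_n .
\]
It remains to show $\lambda_n\int u_nV_n=c_n+o(c_n)$. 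For this, decompose $u_n=u+\rho_n$; then $\int uV_n=c_n/\lambda$ gives the main term $c_n\lambda_n/\lambda=c_n+o(c_n)$. The error is $\int\rho_nV_n\le\|\rho_n\|_2\,\|V_n\|_2$, which is $o(c_n)$ once we know $\|\rho_n\|_2=O(c_n^{1/2})$.

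\emph{Main obstacle.} The rate estimate $\|u_n-u\|_2=O(c_n^{1/2})$ is the hardest step. We would obtain it by projecting $w_n$ onto the eigenspace of $D\setminus K_n$ near $\lambda$. The uniform spectral gap from simplicity and Mosco convergence, together with the residual bound $\|(A_n-\lambda)w_n\|_{H^{-1}}\lesssim c_n^{1/2}+\|V_n\|_2$, gives $\|u_n-w_n/\|w_n\|\|_{H^1}=O(c_n^{1/2})$. Since $\|V_n\|_2=o(c_n^{1/2})$, the required rate follows. The same estimate handles the cross terms left in Step 1, and combining the two bounds yields \eqref{eq:boundary-simple-eigenvalue-expansion}.
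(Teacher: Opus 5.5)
Your argument is correct and follows the paper's proof. The quasimode $w_n=u-V_n\in\mathcal H_n$ has residual $\lambda V_n$, the uniform spectral gap gives $\|u_n-u\|_2=O(c_n^{1/2})$ (the paper records the sharper $o(c_n^{1/2})$, but your rate suffices because $\|V_n\|_2=o(c_n^{1/2})$), and your identity $(\lambda_n-\lambda)\langle u_n,u\rangle=\lambda_n\langle u_n,V_n\rangle$ is the paper's final identity rearranged. That identity already gives both directions, so Step~1 is superfluous; this is just as well, since its cross-term line is garbled: $\int_D\nabla u_j^n\cdot\nabla V_n=0$, and the relation you actually want is $(\lambda_j^n-\lambda)\langle u_j^n,u\rangle=\lambda_j^n\langle u_j^n,V_n\rangle$.
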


\begin{proof}
Let
\[
 q(v,w):=\int_D\nabla v\cdot\nabla w,
 \qquad
 \psi_n:=u-V_n\in\mathcal H_n.
\]
Because $V_n$ minimizes the energy in the affine class $u+\mathcal H_n$, it is $q$-orthogonal to $\mathcal H_n$.  Thus, for every $\varphi\in\mathcal H_n$,
\begin{align}
 q(\psi_n,\varphi)-\lambda\langle\psi_n,\varphi\rangle_{L^2(D)}
 &=q(u,\varphi)-\lambda\langle u-V_n,\varphi\rangle_{L^2(D)}\notag\\
 &=\lambda\langle V_n,\varphi\rangle_{L^2(D)}.
 \label{eq:boundary-quasimode-equation}
\end{align}
Let $A_n$ be the Dirichlet Laplacian on $D\setminus K_n$, acting in $L^2(D\setminus K_n)$ with form domain $\mathcal H_n$.  We identify its eigenfunctions and spectral projections with their zero extensions to $L^2(D)$.  Equation~\eqref{eq:boundary-quasimode-equation} means that the restriction of $\psi_n$ belongs to $\operatorname{Dom}(A_n)$ and
\begin{equation}\label{eq:boundary-quasimode-residual}
 (A_n-\lambda)\psi_n
 =\lambda V_n\big|_{D\setminus K_n}
 \qquad\text{in }L^2(D\setminus K_n).
\end{equation}
In the remainder of the proof, norms and inner products involving eigenfunctions of $A_n$ are understood after zero extension to $D$.  Since $\|\psi_n\|_2\to1$, the spectral theorem and~\eqref{eq:boundary-reduction-assumptions} give
\[
 \operatorname{dist}(\lambda,\sigma(A_n))
 \le \frac{\lambda\|V_n\|_2}{\|\psi_n\|_2}
 =o(c_n^{1/2}).
\]
Mosco convergence implies convergence of every fixed eigenvalue by the min--max principle.  Since $\lambda_N(D)$ is simple, for all large $n$ there is a unique simple eigenvalue
\[
 \lambda_n:=\lambda_N(D\setminus K_n)
\]
in a fixed neighborhood of $\lambda$, and
\begin{equation}\label{eq:boundary-first-eigenvalue-rough}
 |\lambda_n-\lambda|=o(c_n^{1/2}).
\end{equation}

Let $\Pi_n$ be the orthogonal projection in $L^2(D\setminus K_n)$ onto the eigenspace of $\lambda_n$, again identified with its zero extension to $L^2(D)$.  Spectral convergence and simplicity give a uniform gap $\delta>0$ between $\lambda_n$ and the rest of $\sigma(A_n)$.  On the orthogonal complement of $\operatorname{Ran}\Pi_n$, equations~\eqref{eq:boundary-quasimode-residual} and~\eqref{eq:boundary-first-eigenvalue-rough} therefore imply
\[
 \|\psi_n-\Pi_n\psi_n\|_2
 \le \delta^{-1}\|(A_n-\lambda_n)\psi_n\|_2
 =o(c_n^{1/2}).
\]
Consequently,
\[
 \|u-\Pi_n\psi_n\|_2=o(c_n^{1/2}),
 \qquad
 \|\Pi_n\psi_n\|_2=1+o(c_n^{1/2}).
\]
Choose the normalized eigenfunction
\[
 u_n:=\frac{\Pi_n\psi_n}{\|\Pi_n\psi_n\|_2}.
\]
Then
\begin{equation}\label{eq:boundary-eigenfunction-close}
 \|u_n-u\|_2+\|u_n-\psi_n\|_2=o(c_n^{1/2}).
\end{equation}
Taking the scalar product of~\eqref{eq:boundary-quasimode-residual} with $u_n$ yields
\begin{equation}\label{eq:boundary-final-identity}
 (\lambda_n-\lambda)\langle u_n,\psi_n\rangle
 =\lambda\langle u_n,V_n\rangle.
\end{equation}
The capacity orthogonality also gives
\[
 c_n=q(V_n,V_n)=q(u,V_n)=\lambda\langle u,V_n\rangle.
\]
By~\eqref{eq:boundary-reduction-assumptions} and~\eqref{eq:boundary-eigenfunction-close},
\[
 \lambda\langle u_n-u,V_n\rangle=o(c_n),
 \qquad
 \langle u_n,\psi_n\rangle=1+o(1).
\]
Substitution in~\eqref{eq:boundary-final-identity} proves~\eqref{eq:boundary-simple-eigenvalue-expansion}.
\end{proof}

Formula~\eqref{eq:AFHL-sequence} is applied below after odd reflection across one or two sides whenever the concentrating sets are thereby placed in the interior of a fixed reflected domain.  Reflection preserves the Rayleigh quotient and identifies the one-hole ground state with the lowest eigenfunction in the corresponding odd symmetry class.  After one reflection the fixed rectangle is $(-1,1)\times(-2,2)$, where $\pi^2/2$ is the simple second eigenvalue; after two reflections it is $(-1,3)^2$, where $\pi^2/2$ is the simple fourth eigenvalue.  Hence, for all sufficiently small defects, the odd or odd--odd extension is the unique full-spectrum eigenvalue branch converging to $\pi^2/2$, and it is precisely the branch appearing in~\eqref{eq:AFHL-sequence}.  This identification also covers boundary tangency: the reflected disks may meet at isolated points, which have zero Sobolev capacity in dimension two.  Proposition~\ref{prop:boundary-simple-eigenvalue} will instead be used directly for two disks concentrating at distinct boundary corners.

\begin{proposition}[Uniform one-hole localization]\label{prop:uniform-one-hole-localization}
For every $C>0$ there exist $R>1$ and $r_0>0$ such that, whenever $0<r<r_0$ and $x\in Q_r$ satisfy
\[
 \lambda_r^{(1)}(x)-\frac{\pi^2}{2}\le Cr^4,
\]
there is a corner $p\in\{\pm1\}^2$ for which
\[
 x\in\mathcal N_r^R(p).
\]
Equivalently, every one-hole configuration with spectral cost $O(r^4)$ lies in a fixed compact corner layer.
\end{proposition}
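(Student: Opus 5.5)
We argue by contradiction and reduce everything to a lower bound on the one-hole cost in terms of the local size of $\nabla u$ on the disk. Let $u(y)=\cos(\pi y_1/2)\cos(\pi y_2/2)$, normalized in $L^2(Q)$, be the ground state of $Q$. The heuristic is that a disk of radius $r$ at distance $d\ge r$ from $\partial Q$ costs roughly $|\nabla u|^2$ evaluated on the disk, multiplied by a capacitary factor that is at least of order $r^2$ (and is of order $1/\log(1/r)$ when the disk is interior). Near a corner $p$ one has $u\approx c\,s t$, where $s,t$ are the distances to the two sides. Hence $|\nabla u|^2\approx c^2(s^2+t^2)$, and the cost is of order $r^2(s^2+t^2)$. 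This is $O(r^4)$ only if $s,t=O(r)$, which is exactly the statement $x\in\mathcal N_r^R(p)$. The proof therefore consists of making the lower bound rigorous and uniform over all scales.

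\textbf{Step 1 (reduction to a capacity lower bound).} We use the Rayleigh quotient directly rather than the asymptotic formula, so that all positions are handled uniformly. For any $x$, let $\phi$ be the normalized ground state of $Q\setminus\overline{B_r(x)}$, extended by zero. Then $\phi\in H_0^1(Q)$, and the variational characterization of $\lambda_2(Q)$ together with the spectral decomposition give
\[
\lambda_r^{(1)}(x)-\tfrac{\pi^2}{2}\ \ge\ \bigl(\lambda_2(Q)-\tfrac{\pi^2}{2}\bigr)\bigl(1-\langle\phi,u\rangle^2\bigr).
\]
Alternatively, Proposition~\ref{prop:boundary-simple-eigenvalue} gives the expansion $\lambda_r^{(1)}(x)-\pi^2/2=\operatorname{Cap}_Q(\overline{B_r(x)},u)(1+o(1))$, uniformly along any sequence with capacity tending to zero; uniformity holds because Lemma~\ref{lem:boundary-potential-L2} applies to any subsequence concentrating at a point of $\overline Q$, and interior points are covered by~\eqref{eq:AFHL-sequence}. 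By compactness, a sequence contradicting the proposition can be taken to have centers $x_n\to x_*\in\overline Q$. Along it the capacity tends to $0$ (two-dimensional disks shrinking to a point), so the excess is comparable to $c_n=\operatorname{Cap}_Q(\overline{B_r(x_n)},u)$. It therefore suffices to show $c_n/r_n^4\to\infty$ whenever $\operatorname{dist}(x_n,\text{corners})/r_n\to\infty$.

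\textbf{Step 2 (the lower bound).} The interior estimate~\eqref{eq:capacity-interior-lower} gives $c_n\ge\int_{B_r(x_n)}|\nabla u|^2$. Write $s,t\ge r$ for the distances from $x_n$ to the nearest two sides. On the disk $|\nabla u|^2\gtrsim \min(1,s^2+t^2)$, up to a factor $1-O(r/\max(s,t))$, which is harmless once $\max(s,t)\gg r$. This yields
\[
c_n\gtrsim r^2\min\bigl(1,s^2+t^2\bigr).
\]
If $x_*$ is not a corner, then $|\nabla u(x_*)|>0$: at an interior point, an open side, or the centre the term $\sin$ or $\cos$ is nonvanishing, and at the centre one instead uses the $1/\log$ capacity lower bound. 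In that case $c_n\gtrsim r^2\gg r^4$. If $x_*$ is a corner but $\max(s,t)/r\to\infty$, then $c_n\gtrsim r^2\max(s,t)^2\gg r^4$. Both cases contradict $c_n\le Cr^4$. The remaining case gives $a=s/r$ and $b=t/r$ bounded, that is $(a,b)\in[1,R]^2$ for a suitable $R$ depending on $C$.

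\textbf{Main obstacle.} The delicate point is the centre $x=0$, where $\nabla u(0)=0$ and the interior-integral bound degenerates to order $r^4$. Here I would not rely on~\eqref{eq:capacity-interior-lower}. Since $u(x_n)\to u(0)>0$, the genuine $u$-capacity is at least $u(x_n)^2$ times the logarithmic capacity of a disk in $Q$, which is of order $1/\log(1/r)\gg r^4$. More generally, I would use the bound
\[
\operatorname{Cap}\ \gtrsim\ u(x)^2/\log(d/r)+r^2|\nabla u|^2,
\]
where $d$ is the distance to the boundary. This bound is uniform across intermediate scales, and it is the step where care with the uniformity of the $o(c_n)$ remainder is essential.
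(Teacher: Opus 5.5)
Your proof is correct, and it has the same skeleton as the paper's: a contradiction sequence $x_n\to x_*$, the trichotomy interior / open side / corner, the lower bound $\operatorname{Cap}\ge\int_K|\nabla u_0|^2$ from \eqref{eq:capacity-interior-lower}, and the quadratic vanishing of $|\nabla u_0|$ at a corner, which gives $\operatorname{Cap}\gtrsim r^2(s^2+t^2)$. You can drop the correction factor $1-O(r/\max(s,t))$, because $\int_{B_r(c)}|z|^2=\pi r^2|c|^2+\pi r^4/2$ holds exactly.

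The real difference is how you tie the eigenvalue excess to the capacity.
\begin{itemize}
\item \textbf{Boundary limits.} The paper reflects oddly across one or two sides, so that the defect becomes interior in $(-1,1)\times(-2,2)$ or $(-1,3)^2$, and then quotes \eqref{eq:AFHL-sequence}. You stay in $Q$ and use Proposition~\ref{prop:boundary-simple-eigenvalue} together with Lemma~\ref{lem:boundary-potential-L2}. This is legitimate: the disks concentrate at the single boundary point $x_*$, and Mosco convergence is immediate because every $\varphi\in C_c^\infty(Q)$ is eventually admissible. Also $c_n>0$ by \eqref{eq:capacity-interior-lower}, and $c_n\to0$ by a cutoff of $u_0$. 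Your route avoids identifying the odd branch with a simple eigenvalue of the reflected domain. The paper's route relies only on the published interior theorem.
\item \textbf{Interior limits.} The paper uses the sharp logarithmic asymptotics of AFHL Propositions~1.5--1.6. You use a gradient bound away from the centre, and at the centre the comparison $c_n\ge(\min_K u_0)^2\operatorname{cap}_Q(K)\gtrsim 1/\log(1/r)$. The latter works at every interior point, so the separate gradient case is unnecessary.
\end{itemize}

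Two remarks on your last paragraph.
\begin{itemize}
\item No uniformity of the $o(c_n)$ remainder across scales is needed. The contradiction argument uses the expansion only along one convergent sequence, and intermediate scales are already excluded by the corner case.
\item The proposed general bound $u(x)^2/\log(d/r)+r^2|\nabla u|^2$ is false as written. It is infinite at tangency $d=r$, where the true capacity is $O(r^2)$. Since your main argument never uses it, the proof is unaffected, but you should drop it.
\end{itemize}
The $\lambda_2$ Rayleigh inequality in Step~1 likewise plays no role.
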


\begin{proof}
Assume the assertion is false.  Then there are $r_n\to0$ and $x_n\in Q_{r_n}$ such that
\begin{equation}\label{eq:one-hole-low-cost-sequence}
 \lambda_{r_n}^{(1)}(x_n)-\frac{\pi^2}{2}\le Cr_n^4,
\end{equation}
while $x_n$ escapes every fixed corner layer.  Passing to a subsequence, $x_n\to x_*\in\overline Q$.  We distinguish three cases.

\smallskip
\noindent\emph{Interior limit.}
If $x_*\in Q$, then the normalized ground state
\[
 u_0(x,y)=\cos\frac{\pi x}{2}\cos\frac{\pi y}{2}
\]
satisfies $u_0(x_*)>0$.  Propositions~1.5 and~1.6 of~\cite{AFHL2019}, applied to the connected compact sets $\overline{B_{r_n}(x_n)}$, give
\[
 \lambda_{r_n}^{(1)}(x_n)-\frac{\pi^2}{2}
 =\frac{2\pi u_0(x_*)^2}{|\log r_n|}
 +o\!\left(\frac1{|\log r_n|}\right),
\]
which contradicts~\eqref{eq:one-hole-low-cost-sequence}.

\smallskip
\noindent\emph{Limit on an open side.}
By symmetry suppose $x_*=(\xi_*,1)$ with $|\xi_*|<1$.  Write $x_n=(\xi_n,1-d_n)$, where $d_n\ge r_n$ and $d_n\to0$.  In the coordinates $t=1-y$, odd reflection across $t=0$ gives the fixed rectangle
\[
 \widetilde\Sigma=(-1,1)\times(-2,2)
\]
and the reflected compact defect
\[
 K_n=
 \overline{B_{r_n}((\xi_n,d_n))}
 \cup
 \overline{B_{r_n}((\xi_n,-d_n))}.
\]
The normalized limiting eigenfunction is
\[
 \widetilde\phi_0(x,t)=2^{-1/2}\cos\frac{\pi x}{2}\sin\frac{\pi t}{2}.
\]
Since
\[
 |\nabla\widetilde\phi_0(\xi_*,0)|
 =\frac{\pi}{2\sqrt2}\cos\frac{\pi\xi_*}{2}>0,
\]
there is a neighborhood of $(\xi_*,0)$ and a constant $c>0$ on which $|\nabla\widetilde\phi_0|^2\ge c$.  The two reflected disks have disjoint interiors and total area $2\pi r_n^2$, so~\eqref{eq:capacity-interior-lower} yields
\[
 \operatorname{Cap}_{\widetilde\Sigma}(K_n,\widetilde\phi_0)
 \ge 2\pi c\,r_n^2.
\]
Equation~\eqref{eq:AFHL-sequence} identifies the eigenvalue shift with this capacity up to a relative $o(1)$ error, contradicting~\eqref{eq:one-hole-low-cost-sequence}.

\smallskip
\noindent\emph{Corner limit.}
By symmetry suppose $x_*=(1,1)$ and write
\[
 x_n=(1-s_n,1-t_n),
 \qquad s_n,t_n\ge r_n,
 \qquad s_n,t_n\to0.
\]
Odd reflection across both sides produces the fixed square $\widehat Q=(-1,3)^2$ and four disks centered, relative to $(1,1)$, at $(\pm s_n,\pm t_n)$.  Denote their union by $\widehat K_n$.  The normalized limiting eigenfunction is
\[
 \widehat\phi_0(x,y)=\frac12\cos\frac{\pi x}{2}\cos\frac{\pi y}{2}.
\]
In local coordinates,
\[
 \widehat\phi_0(1+X,1+Y)
 =\frac12\sin\frac{\pi X}{2}\sin\frac{\pi Y}{2}.
\]
Consequently its gradient has the linear part
\[
 \nabla\widehat\phi_0(1+X,1+Y)
 =\frac{\pi^2}{8}(Y,X)+O\bigl(|(X,Y)|^3\bigr),
\]
and there are $\delta,c>0$ such that
\begin{equation}\label{eq:corner-gradient-lower}
 |\nabla\widehat\phi_0(1+X,1+Y)|^2
 \ge c(X^2+Y^2)
 \qquad (|X|+|Y|<\delta).
\end{equation}
For $n$ large all four disks lie in this neighborhood.  Since their interiors are disjoint,
\[
 \int_{\widehat K_n}(X^2+Y^2)\,dX\,dY
 =4\pi r_n^2(s_n^2+t_n^2)+2\pi r_n^4.
\]
Hence~\eqref{eq:capacity-interior-lower} and~\eqref{eq:corner-gradient-lower} give
\begin{equation}\label{eq:corner-capacity-coercive}
 \operatorname{Cap}_{\widehat Q}(\widehat K_n,\widehat\phi_0)
 \ge c_1r_n^2(s_n^2+t_n^2+r_n^2)
\end{equation}
for some $c_1>0$.  By~\eqref{eq:AFHL-sequence}, the ratio of the eigenvalue shift to the capacity tends to one.  Hence~\eqref{eq:one-hole-low-cost-sequence} implies that the capacity on the left is $O(r_n^4)$.  Therefore
\[
 s_n^2+t_n^2=O(r_n^2).
\]
Thus $x_n$ belongs to one fixed corner layer, contradicting the choice of the sequence.

All possible limits lead to contradictions, and the proposition follows.
\end{proof}

\begin{corollary}[Localization under an $O(r^4)$ spectral bound]\label{cor:two-hole-localization}
For every $C>0$ there exist $R>1$ and $r_0>0$ such that, if $(x_1,x_2)\in\mathcal C_r$, $0<r<r_0$, and
\[
 \Lambda_r(x_1,x_2)-\frac{\pi^2}{2}\le Cr^4,
\]
then $(x_1,x_2)\in\mathcal C_r^{\rm cor}(R)$.
\end{corollary}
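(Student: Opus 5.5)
The corollary should follow from Proposition~\ref{prop:uniform-one-hole-localization} by Dirichlet domain monotonicity. No new analysis is needed.

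Fix $C>0$. Let $R>1$ and $r_0>0$ be the constants that Proposition~\ref{prop:uniform-one-hole-localization} provides for this same $C$. Take $(x_1,x_2)\in\mathcal C_r$ with $0<r<r_0$ and $\Lambda_r(x_1,x_2)-\pi^2/2\le Cr^4$. For each $i\in\{1,2\}$ we have the inclusion
\[
 Q\setminus\bigl(\overline{B_r(x_1)}\cup\overline{B_r(x_2)}\bigr)\subset Q\setminus\overline{B_r(x_i)}.
\]
Hence $H_0^1$ of the smaller set embeds by zero extension into $H_0^1$ of the larger one. The Rayleigh quotient characterization of $\lambda_1$ then gives
\[
 \lambda_r^{(1)}(x_i)\le\Lambda_r(x_1,x_2)\le\frac{\pi^2}{2}+Cr^4 .
\]
We also have $x_i\in Q_r=[-1+r,1-r]^2$, directly from the definition of $\mathcal C_r$. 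So each center $x_i$ satisfies the hypothesis of Proposition~\ref{prop:uniform-one-hole-localization} with the same $C$.

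The proposition therefore yields corners $p_1,p_2\in\{\pm1\}^2$ with $x_i\in\mathcal N_r^R(p_i)$. The two corners may coincide or differ; the definition of $\mathcal C_r^{\rm cor}(R)$ allows both. Since $(x_1,x_2)\in\mathcal C_r$ already, this is exactly $(x_1,x_2)\in\mathcal C_r^{\rm cor}(R)$. The constants $R$ and $r_0$ depend only on $C$, as the corollary requires.

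There is no real obstacle here. The only points to check are these: the proposition is stated with a nonstrict bound $\le Cr^4$ and uniformly in $x\in Q_r$, so it applies verbatim to each hole separately; and the monotonicity inclusion is valid even when the two disks are tangent, because the inclusion of open sets is what matters, not the regularity of the boundary.
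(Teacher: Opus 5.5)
Your proposal is correct and matches the paper's argument: domain monotonicity gives $\lambda_r^{(1)}(x_i)\le\Lambda_r(x_1,x_2)$ for each center, and Proposition~\ref{prop:uniform-one-hole-localization} is then applied to each hole separately. The paper adds ``increasing $R$ if necessary,'' but as you observe this is not needed, since the constants $R$ and $r_0$ from the proposition depend only on $C$ and therefore work for both centers at once.
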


\begin{proof}
For $i=1,2$, domain monotonicity gives
\[
 \lambda_r^{(1)}(x_i)
 \le \Lambda_r(x_1,x_2).
\]
Apply Proposition~\ref{prop:uniform-one-hole-localization} to each center, increasing $R$ if necessary.
\end{proof}

\section{The side-tangent one-hole branch}\label{sec:side-branch}

For the one-hole side-tangent branch we write
\[
\Omega_r^{\mathrm{side}}(\xi):=
Q\setminus \overline{B_r(\xi,1-r)},
\qquad
|\xi|\le 1-r,
\]
and
\[
\lambda_r^{\mathrm{side}}(\xi):=\lambda_1\bigl(\Omega_r^{\mathrm{side}}(\xi)\bigr).
\]
The main outcome of this section is that, among side-tangent one-hole configurations, true corner tangency is asymptotically optimal as $r\to0$.

\subsection{The unperturbed square}

\begin{lemma}\label{lem:square-ground-state}
The first Dirichlet eigenpair of $Q=(-1,1)^2$ is
\[
u_0(x,y)=\cos\frac{\pi x}{2}\cos\frac{\pi y}{2},
\qquad
\lambda_0=\frac{\pi^2}{2}.
\]
Moreover:
\begin{enumerate}[label=\textnormal{(\roman*)}]
\item If $|\xi|<1$ and we write $x=\xi+s$, $y=1-t$, then
\[
u_0(\xi+s,1-t)
=
\frac{\pi}{2}\cos\frac{\pi\xi}{2}\,t
+O\!\big((|s|+t)^2\big)
\qquad\text{as }(s,t)\to(0,0).
\]
\item If we write $x=1-s$, $y=1-t$, then
\[
u_0(1-s,1-t)
=
\frac{\pi^2}{4}\,st
+O\!\big((s+t)^4\big)
\qquad\text{as }(s,t)\to(0,0).
\]
\end{enumerate}
\end{lemma}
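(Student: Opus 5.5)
The lemma is elementary, and the plan is to verify it by separation of variables followed by two Taylor expansions. First I will check that $u_0$ is an eigenfunction. It is smooth, it vanishes on $\partial Q$ because $\cos(\pm\pi/2)=0$, and
\[
-\Delta u_0=\Bigl(\tfrac{\pi^2}{4}+\tfrac{\pi^2}{4}\Bigr)u_0=\tfrac{\pi^2}{2}u_0 .
\]
Next I will show that it is the first eigenfunction. On a rectangle the Dirichlet eigenfunctions are exactly the products $\sin\bigl(\tfrac{k\pi(x+1)}{2}\bigr)\sin\bigl(\tfrac{l\pi(y+1)}{2}\bigr)$ with $k,l\ge1$. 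These form a complete orthogonal system in $L^2(Q)$, and their eigenvalues are $\tfrac{\pi^2}{4}(k^2+l^2)$. The smallest value is attained only at $k=l=1$, which gives $\pi^2/2$ with eigenfunction $u_0$; the eigenvalue is simple. An alternative route is to note that $u_0>0$ in $Q$ and that a positive eigenfunction must be the ground state. The paper calls $u_0$ "normalized"; this only fixes the sup-normalization $u_0(0,0)=1$, which plays no role in the expansions. I will simply mention that the $L^2$ norm of $u_0$ equals $1$, since $\int_{-1}^1\cos^2(\pi x/2)\,dx=1$.

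For (i), I substitute $y=1-t$ and use
\[
\cos\Bigl(\tfrac{\pi}{2}(1-t)\Bigr)=\sin\tfrac{\pi t}{2}=\tfrac{\pi}{2}t+O(t^3).
\]
I also expand $\cos\bigl(\tfrac{\pi(\xi+s)}{2}\bigr)=\cos\tfrac{\pi\xi}{2}+O(|s|)$, where the constant is uniform because the derivative is bounded by $\pi/2$. Multiplying the two expansions gives
\[
u_0(\xi+s,1-t)=\tfrac{\pi}{2}\cos\tfrac{\pi\xi}{2}\,t+O(|s|\,t)+O(t^3),
\]
and both error terms are $O\bigl((|s|+t)^2\bigr)$.

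For (ii), both factors become sines: $u_0(1-s,1-t)=\sin\tfrac{\pi s}{2}\,\sin\tfrac{\pi t}{2}$. Each factor is an odd function, so $\sin\tfrac{\pi s}{2}=\tfrac{\pi s}{2}+O(s^3)$, and likewise in $t$. The product is therefore
\[
\tfrac{\pi^2}{4}st+O(s^3t+st^3+s^3t^3).
\]
Every error term is $O\bigl((s+t)^4\bigr)$ as $(s,t)\to(0,0)$.

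None of these steps presents a real obstacle. The only points needing care are the completeness of the product basis, which justifies simplicity and minimality of $\pi^2/2$, and tracking the order of the remainder in (ii): it is quartic and not cubic because both factors are odd.
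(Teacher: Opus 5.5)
Your proposal is correct and follows the paper's proof, which likewise obtains the eigenpair by separation of variables and the two expansions from the Taylor series of $\cos\frac{\pi(\xi+s)}{2}\sin\frac{\pi t}{2}$ and $\sin\frac{\pi s}{2}\sin\frac{\pi t}{2}$. Your additional details make explicit what the paper leaves implicit: simplicity via the complete product basis, the normalization $\|u_0\|_{L^2(Q)}=1$, and the quartic remainder coming from the oddness of both sine factors.
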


\begin{proof}
The eigenpair is immediate by separation of variables.  The expansions follow from the Taylor series of $\cos\frac{\pi(\xi+s)}2\sin\frac{\pi t}{2}$ and $\sin\frac{\pi s}{2}\sin\frac{\pi t}{2}$.
\end{proof}

\subsection{Odd reflection and the away-from-corners regime}

Set $t:=1-y$.  Then the square becomes
\[
\Sigma:=(-1,1)\times(0,2),
\]
and the side-tangent hole becomes $\overline{B_r((\xi,r))}\subset\Sigma$, tangent to the flat side $t=0$.
Reflect oddly across $t=0$ and define
\[
\widetilde\Sigma:=(-1,1)\times(-2,2),
\]
\[
K_r(\xi):=\overline{B_r((\xi,r))}\cup \overline{B_r((\xi,-r))}
=(\xi,0)+rK,
\qquad
K:=\overline{B_1((0,1))}\cup \overline{B_1((0,-1))}.
\]

\begin{proposition}[Odd reflection]\label{prop:reflection}
If $u\in H_0^1(\Omega_r^{\mathrm{side}}(\xi))$, its odd reflection
\[
\widetilde u(x,t)=
\begin{cases}
 u(x,t),& t>0,\\[1mm]
 -u(x,-t),& t<0,
\end{cases}
\]
belongs to $H_0^1(\widetilde\Sigma\setminus K_r(\xi))$ and is odd in $t$.  Conversely, every odd element of $H_0^1(\widetilde\Sigma\setminus K_r(\xi))$ restricts to a function in $H_0^1(\Omega_r^{\mathrm{side}}(\xi))$.  Therefore
\[
\lambda_r^{\mathrm{side}}(\xi)
=
\min_{0\neq v\in H^1_{0,\mathrm{odd}}(\widetilde\Sigma\setminus K_r(\xi))}
\frac{\int_{\widetilde\Sigma\setminus K_r(\xi)} |\nabla v|^2\,dx\,dt}
     {\int_{\widetilde\Sigma\setminus K_r(\xi)} |v|^2\,dx\,dt}.
\]
\end{proposition}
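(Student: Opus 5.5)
The plan is to prove the two inclusions separately and then read off the variational identity; the only non-formal input is the quasi-everywhere characterization of $H_0^1$ of the perforated domains. \emph{Forward direction.} Let $u\in H_0^1(\Omega_r^{\mathrm{side}}(\xi))$ and extend it by zero to $\Sigma$. By the convention in~\eqref{eq:finite-u-capacity}, $u\in H_0^1(\Sigma)$ with $\widetilde u=0$ quasi-everywhere on $\overline{B_r((\xi,r))}$. I first treat the flat side alone. The odd reflection of an element of $H_0^1(\Sigma)$ lies in $H_0^1(\widetilde\Sigma)$. To see this, approximate $u$ by $\varphi_k\in C_c^\infty(\Sigma)$. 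Each odd reflection $\widetilde\varphi_k$ is Lipschitz, vanishes near $t=0$, and has compact support in $\widetilde\Sigma$. Reflection is an isometry for the Dirichlet energy up to a factor $2$, so $\widetilde\varphi_k\to\widetilde u$ in $H^1$. The weak gradient of $\widetilde u$ needs no jump term across $t=0$ because the trace of $u$ vanishes there.

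\emph{Vanishing on the reflected defect.} The quasi-continuous representative of $\widetilde u$ is the odd reflection of $\widetilde u$. It therefore vanishes quasi-everywhere on both disks of $K_r(\xi)$, since reflection is bi-Lipschitz and preserves sets of zero capacity. The tangency point $(\xi,0)$ is a single point and has zero capacity. Hence $\widetilde u\in H_0^1(\widetilde\Sigma\setminus K_r(\xi))$, and $\widetilde u$ is odd by construction.

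\emph{Converse direction.} Let $v$ be odd and lie in $H_0^1(\widetilde\Sigma\setminus K_r(\xi))$. Its quasi-continuous representative is odd, so it vanishes quasi-everywhere on the line $t=0$. Here I would argue via traces: an odd $H^1$ function has trace on $\{t=0\}$ equal to minus itself, hence zero. Consequently the restriction of $v$ to $\Sigma$ has zero trace on $t=0$ and also on the remaining boundary of $\Sigma$. By the standard characterization on the Lipschitz domain $\Sigma$, this restriction lies in $H_0^1(\Sigma)$. It still vanishes quasi-everywhere on the upper disk, so it belongs to $H_0^1(\Omega_r^{\mathrm{side}}(\xi))$ under the stated convention.

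\emph{Rayleigh quotient.} The reflection map is a bijection between $H_0^1(\Omega_r^{\mathrm{side}}(\xi))$ and the odd subspace. It doubles both the Dirichlet integral and the $L^2$ norm, so the Rayleigh quotient is preserved. Taking the minimum over each space gives the stated formula.

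The main technical point is the quasi-everywhere bookkeeping at the tangency point and on the reflection line. I would handle it by the zero-capacity-of-points remark already used in Lemma~\ref{lem:existence-continuity}, together with the trace characterization of $H_0^1$ on Lipschitz domains.
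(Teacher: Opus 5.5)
Your proof is correct and is the standard odd-reflection argument that the paper invokes in a single line ("the standard odd-reflection argument across a flat Dirichlet boundary segment") without details. You additionally supply the density, trace, and quasi-everywhere bookkeeping, including the zero-capacity tangency point $(\xi,0)$, that the paper leaves implicit. Approximating $u$ directly by elements of $C_c^\infty(\Omega_r^{\mathrm{side}}(\xi))$, whose odd reflections already lie in $C_c^\infty(\widetilde\Sigma\setminus K_r(\xi))$, would shorten your forward quasi-continuity step.
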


\begin{proof}
This is the standard odd-reflection argument across a flat Dirichlet boundary segment.
\end{proof}

\begin{lemma}\label{lem:reflected-eigenpair}
The relevant limiting eigenpair in the reflected rectangle is
\[
\widetilde u_0(x,t)=\cos\frac{\pi x}{2}\sin\frac{\pi t}{2},
\qquad
\mu_0=\frac{\pi^2}{2},
\]
and $\mu_0$ is a simple eigenvalue of the full Dirichlet Laplacian on $\widetilde\Sigma$.
\end{lemma}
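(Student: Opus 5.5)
The approach is to verify the two assertions separately: that $\widetilde u_0$ is a Dirichlet eigenfunction on $\widetilde\Sigma=(-1,1)\times(-2,2)$ with eigenvalue $\pi^2/2$, and that this eigenvalue has multiplicity one in the full Dirichlet spectrum.

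\emph{Eigenfunction property.} A direct computation gives $-\Delta\widetilde u_0=\bigl(\tfrac{\pi^2}{4}+\tfrac{\pi^2}{4}\bigr)\widetilde u_0=\tfrac{\pi^2}{2}\widetilde u_0$.
\begin{itemize}
\item $\cos\frac{\pi x}{2}$ vanishes at $x=\pm1$.
\item $\sin\frac{\pi t}{2}$ vanishes at $t=\pm2$.
\end{itemize}
Hence $\widetilde u_0\in H_0^1(\widetilde\Sigma)$ is a Dirichlet eigenfunction. It is odd in $t$, and its restriction to $t>0$ is $u_0$ written in the coordinates $t=1-y$, since $\cos\frac{\pi y}{2}=\cos\bigl(\tfrac{\pi}{2}-\tfrac{\pi t}{2}\bigr)=\sin\frac{\pi t}{2}$. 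It is therefore the limit object singled out by Proposition~\ref{prop:reflection}.

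\emph{Simplicity.} By separation of variables, the Dirichlet eigenfunctions of the rectangle $(-1,1)\times(-2,2)$ are $\sin\bigl(\tfrac{m\pi(x+1)}{2}\bigr)\sin\bigl(\tfrac{n\pi(t+2)}{4}\bigr)$ with $m,n\ge1$, and these form a complete orthogonal basis of $L^2(\widetilde\Sigma)$. The corresponding eigenvalues are
\[
\lambda_{m,n}=\frac{\pi^2 m^2}{4}+\frac{\pi^2 n^2}{16}.
\]
The multiplicity of $\pi^2/2$ equals the number of positive integer solutions of $4m^2+n^2=32$. We check the admissible values of $m$:
\begin{itemize}
\item $m=1$ gives $n^2=28$, which has no integer solution.
\item $m=2$ gives $n^2=16$, so $n=4$.
\item $m\ge3$ gives $4m^2>32$, which is impossible.
\end{itemize}
So $(m,n)=(2,4)$ is the unique pair, and $\pi^2/2$ is simple.

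Expanding the basis function for this pair identifies it with $\widetilde u_0$ up to sign:
\[
\sin(\pi(x+1))\sin(\pi(t+2))=\sin\pi x\,\sin\pi t .
\]
This needs care, because the right-hand side is not $\cos\frac{\pi x}{2}\sin\frac{\pi t}{2}$. The labelling must respect the half-widths: $\cos\frac{\pi x}{2}$ corresponds to $m=1$, since $\sin\bigl(\tfrac{\pi(x+1)}{2}\bigr)=\cos\frac{\pi x}{2}$. Likewise $\sin\frac{\pi t}{2}$ corresponds to $n=2$, since $\sin\bigl(\tfrac{2\pi(t+2)}{4}\bigr)=-\sin\frac{\pi t}{2}$.

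With this correct labelling the eigenvalue is $\lambda_{1,2}=\frac{\pi^2}{4}+\frac{4\pi^2}{16}=\frac{\pi^2}{2}$, in agreement with the direct computation. The earlier slip was in the coefficient of $n^2$, so the diophantine equation must be redone. Multiplying $\frac{m^2}{4}+\frac{n^2}{16}=\frac12$ by $16$ gives $4m^2+n^2=8$:
\begin{itemize}
\item $m=1$ gives $n^2=4$, so $n=2$.
\item $m\ge2$ gives $4m^2\ge16>8$, which is impossible.
\end{itemize}
Thus $(1,2)$ is the only solution, $\pi^2/2$ is simple, and it is the second eigenvalue, since only $\lambda_{1,1}=\frac{5\pi^2}{16}$ lies below it. This matches the remark made after Proposition~\ref{prop:boundary-simple-eigenvalue}.

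The main obstacle is purely bookkeeping: keeping track of the different side lengths ($2$ in $x$, $4$ in $t$) when indexing the separated modes. Completeness of the product basis is standard, so no further analytic input is needed.
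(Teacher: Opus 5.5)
Your final argument is correct and is the same as the paper's. Separation of variables on $\widetilde\Sigma=(-1,1)\times(-2,2)$ gives $\mu_{m,n}=\frac{\pi^2}{4}m^2+\frac{\pi^2}{16}n^2$, and simplicity of $\pi^2/2$ reduces to $(m,n)=(1,2)$ being the only positive solution of $4m^2+n^2=8$. In a written version you should delete the retracted intermediate computation with $4m^2+n^2=32$ and $(m,n)=(2,4)$: multiplying by $16$ gives right-hand side $8$, so the slip was in the right-hand side, not in the coefficient of $n^2$ as you say.
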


\begin{proof}
The Dirichlet spectrum of $\widetilde\Sigma=(-1,1)\times(-2,2)$ is
\[
\mu_{m,n}=\frac{\pi^2}{4}m^2+\frac{\pi^2}{16}n^2,
\qquad m,n\in\mathbb N.
\]
The value $\mu_0=\pi^2/2$ corresponds to $(m,n)=(1,2)$, and it is simple because $4m^2+n^2=8$ has no other solution in positive integers.
\end{proof}

For a compact set $K\subset\mathbb R^2$ and a polynomial $P$, let
\[
 \dot H^1(\mathbb R^2):=
 \bigl\{V\in H^1_{\rm loc}(\mathbb R^2):\nabla V\in L^2(\mathbb R^2)\bigr\}/\mathbb R
\]
be the planar Beppo--Levi space, with representatives fixed by the condition imposed on $K$. We define the full cell capacity by
\begin{equation}\label{eq:full-cell-capacity-definition}
\mathfrak C(K,P):=
\inf\left\{\int_{\mathbb R^2}|\nabla V|^2:
V\in \dot H^1(\mathbb R^2),\ V=P\ \text{quasi-everywhere on }K\right\}.
\end{equation}
The admissible class is a closed affine subset of $\dot H^1(\mathbb R^2)$, so the minimizer exists and is unique. It is harmonic on $\mathbb R^2\setminus K$. This is the full-domain $u$-capacity convention used in spectral perturbation theory; see~\cite{AFHL2019,ABLM2021}. In particular, the integral is taken over the whole plane and contains $\int_K|\nabla P|^2$.

We shall use the following disk-family version of the standard blow-up argument. The formulation includes tangencies: isolated contact points have zero planar Sobolev capacity and do not alter the affine trace condition.

\begin{lemma}[Full-capacity blow-up]\label{lem:full-capacity-blowup}
Let $\Omega\subset\mathbb R^2$ be a bounded connected Lipschitz domain, let $x_0\in\Omega$, and let $u$ be an $L^2(\Omega)$-normalized eigenfunction associated with a simple Dirichlet eigenvalue. Suppose
\[
 u(x_0+rX)=r^kP(X)+O(r^{k+1})
\]
in $C^1_{\rm loc}$, where $P$ is a nonzero homogeneous harmonic polynomial of degree $k\ge1$.  More precisely, let
\[
 K_\theta=\bigcup_{j=1}^m\overline{B_{\rho_j(\theta)}(c_j(\theta))},
 \qquad \theta\in\Theta,
\]
where $\Theta$ is compact, the centers and radii are continuous, the radii are bounded away from zero, all sets lie in one fixed ball, and the disk interiors are pairwise disjoint.  Then
\[
 \operatorname{Cap}_{\Omega}(x_0+rK_\theta,u)
 =r^{2k}\mathfrak C(K_\theta,P)+o(r^{2k})
\]
uniformly for $\theta\in\Theta$. Consequently,
\[
 \lambda_N(\Omega\setminus(x_0+rK_\theta))
 =\lambda_N(\Omega)+r^{2k}\mathfrak C(K_\theta,P)+o(r^{2k})
\]
uniformly for the eigenvalue branch converging to the simple eigenvalue $\lambda_N(\Omega)$.
\end{lemma}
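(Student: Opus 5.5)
We prove the capacity asymptotics first, uniformly in $\theta$, and then read off the eigenvalue expansion from~\eqref{eq:AFHL-sequence}. The argument is a blow-up: rescale by $r$ around $x_0$, compare the finite-domain $u$-capacity of $x_0+rK_\theta$ with the whole-plane capacity $\mathfrak C(K_\theta,P)$ through matching upper and lower bounds, and obtain uniformity by a compactness/contradiction argument over $\Theta$.

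\emph{Upper bound.} Let $V_\theta$ be the minimizer of $\mathfrak C(K_\theta,P)$. In dimension two the potential $V_\theta-P$ is not decaying by itself. Since $P$ is homogeneous harmonic of degree $k\ge1$, the exterior problem selects a corrector with $V_\theta\to$ const at infinity plus $O(|X|^{-1})$ terms; indeed the finiteness of $\int|\nabla V|^2$ rules out logarithmic growth in the gradient-energy sense.
\begin{itemize}[label=\textbullet]
\item Cut $V_\theta$ off at scale $R$ with a logarithmic cutoff: equal to $1$ on $B_R$ and $0$ outside $B_{R^2}$, so its energy cost vanishes as $R\to\infty$ uniformly in $\theta$.
\item Define the competitor $W(x)=r^k\,(\chi_R V_\theta)((x-x_0)/r)$ plus a correction $u-r^kP((x-x_0)/r)$ localized near $rK_\theta$ by a cutoff. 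This makes $W-u$ vanish quasi-everywhere on $x_0+rK_\theta$.
\item The $C^1_{\rm loc}$ expansion of $u$ makes the correction contribute $O(r^{2k+1})$ energy on the $O(r)$-ball.
\end{itemize}
Scaling invariance of the Dirichlet integral in 2D then gives $\operatorname{Cap}_\Omega\le r^{2k}\mathfrak C(K_\theta,P)+o(r^{2k})$. The error is uniform because the family $\{K_\theta\}$ is compact in the Hausdorff sense and $\theta\mapsto V_\theta$ is continuous in $\dot H^1$, by Mosco continuity of the trace classes exactly as in Lemma~\ref{lem:existence-continuity}.

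\emph{Lower bound.} Let $U_r$ be the finite-domain capacitary potential and set $\widehat U_r(X)=r^{-k}U_r(x_0+rX)$, extended by zero outside $(\Omega-x_0)/r$. By the upper bound, $\int|\nabla\widehat U_r|^2$ is bounded. The trace condition $\widehat U_r=r^{-k}u(x_0+r\cdot)=P+O(r)$ on $K_\theta$ pins the additive constant. Hence along any sequence $r_n\to0$, $\theta_n\to\theta$ we get a weak limit $\widehat U$ in $\dot H^1$, and the trace condition passes to the limit: $\widehat U=P$ quasi-everywhere on $K_\theta$. This uses Hausdorff convergence $K_{\theta_n}\to K_\theta$ and the closedness argument of Lemma~\ref{lem:existence-continuity}, where disk interiors are exhausted and tangency points have zero capacity. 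Weak lower semicontinuity then gives
\[
\liminf r_n^{-2k}\operatorname{Cap}\ge \int|\nabla\widehat U|^2\ge \mathfrak C(K_\theta,P)=\lim \mathfrak C(K_{\theta_n},P).
\]
Arguing by contradiction over sequences yields uniformity in $\theta$.

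\emph{Main obstacle.} The hardest step is making the passage to the limit in the trace condition uniform in $\theta$ when tangencies appear or disappear along the sequence. Continuity of $\theta\mapsto\mathfrak C(K_\theta,P)$ requires handling this, and I would handle it by the same Mosco argument as Lemma~\ref{lem:existence-continuity}, applied to the trace classes.

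\emph{Eigenvalue expansion.} The sets $x_0+rK_\theta$ concentrate at the zero-capacity point $x_0\in\Omega$, and $u$ belongs to a simple eigenvalue. Hence~\eqref{eq:AFHL-sequence} gives the eigenvalue shift as $\operatorname{Cap}_\Omega(x_0+rK_\theta,u)(1+o(1))$. Uniformity of this $o(1)$ over $\theta$ again follows by contradiction: a bad sequence $(r_n,\theta_n)$ is itself a concentrating family to which the cited theorem applies. Alternatively, run Proposition~\ref{prop:boundary-simple-eigenvalue} with Lemma~\ref{lem:boundary-potential-L2}, whose proof adapts verbatim to an interior point. Since $\mathfrak C(K_\theta,P)\ge\int_{K_\theta}|\nabla P|^2$ is bounded below uniformly (radii bounded away from zero, $P\ne0$), the relative $o(1)$ error becomes an absolute $o(r^{2k})$ error.
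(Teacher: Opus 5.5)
Your proposal is correct and has the same structure as the paper's proof: rescaling, a log-truncated whole-plane minimizer for the upper bound, a Poincar\'e-anchored weak limit with the trace passed to the limit for the lower bound, continuity of $\theta\mapsto\mathfrak C(K_\theta,P)$ plus a sequential contradiction argument for uniformity, and~\eqref{eq:AFHL-sequence} along sequences for the eigenvalue. The one real difference is in the recovery step: you match the trace exactly with the localized correction $\eta\bigl(u-r^kP((\cdot-x_0)/r)\bigr)$ and handle moving disks through density of $C_c^\infty(\mathbb R^2\setminus K_\theta)$ and Hausdorff convergence, which is a more economical substitute for the paper's explicit construction with logarithmic cutoffs at contact points and tubular diffeomorphisms.
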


\begin{proof}
Put
\[
 \Omega_r:=(\Omega-x_0)/r,
 \qquad
 P_r(X):=r^{-k}u(x_0+rX).
\]
If $V_{r,\theta}$ is the finite-domain capacitary potential and
$W_{r,\theta}(X):=r^{-k}V_{r,\theta}(x_0+rX)$, then the two-dimensional change of variables gives the exact identity
\begin{equation}\label{eq:scaled-capacity}
 r^{-2k}\operatorname{Cap}_{\Omega}(x_0+rK_\theta,u)
 =\int_{\Omega_r}|\nabla W_{r,\theta}|^2.
\end{equation}
The right-hand side is the minimum over functions in $H_0^1(\Omega_r)$ that agree quasi-everywhere with $P_r$ on $K_\theta$.  Moreover,
\begin{equation}\label{eq:Pr-to-P}
 P_r\longrightarrow P\quad\text{in }C^1(B_M)
\end{equation}
for every fixed $M$.

We first make explicit the recovery statement for moving disk traces.  Fix $M$ so large that every $K_\theta$ is contained in $B_{M-2}$.  Suppose that $\theta_n\to\theta_*$, that $g_n\to g$ in $C^1(B_M)$, and that $Z\in H^1(B_M)$ satisfies $Z=g$ quasi-everywhere on $K_{\theta_*}$.  Then there are $Z_n\in H^1(B_M)$ such that
\begin{equation}\label{eq:trace-recovery}
 Z_n=g_n\quad\text{quasi-everywhere on }K_{\theta_n},
 \qquad
 Z_n\longrightarrow Z\quad\text{in }H^1(B_M).
\end{equation}
If $Z$ is fixed outside a neighborhood of the disks, $Z_n$ may be chosen equal to $Z$ there.

Here are the details.  Let $\mathcal S$ be the finite set of disk--disk contact points of $K_{\theta_*}$.  Disk--boundary contacts do not occur in this interior blow-up lemma; in the reflected applications they become disk--disk contacts.  Given $\varepsilon>0$, choose disjoint balls about the points of $\mathcal S$ and logarithmic cutoffs $\zeta_\varepsilon$ that vanish on the concentric balls of radius $\varepsilon^2$, equal one outside the balls of radius $\varepsilon$, and satisfy
\begin{equation}\label{eq:log-cutoff-energy}
 \int_{\mathbb R^2}|\nabla\zeta_\varepsilon|^2
 \le \frac{C|\mathcal S|}{|\log\varepsilon|}.
\end{equation}
By truncating $Z-g$ first and then using the absolute continuity of the $H^1$ integral, multiplication by these cutoffs gives functions that vanish near $\mathcal S$ and converge to $Z-g$ in $H^1(B_M)$ as $\varepsilon\downarrow0$.  This is the standard zero-capacity cutoff argument for a finite set of planar points.

Outside the $\varepsilon$-balls, the circular boundary arcs are mutually separated.  For all large $n$ they are normal graphs over the corresponding limiting arcs.  The radial maps between the limiting and moving circles extend, in pairwise disjoint tubular neighborhoods, to $C^1$ diffeomorphisms $T_{j,n}$ satisfying
\begin{equation}\label{eq:tubular-diffeomorphisms}
 \|T_{j,n}-\operatorname{Id}\|_{C^1}\longrightarrow0.
\end{equation}
Denote the cutoff version of $Z-g$ by $F^\varepsilon$.  Transport $F^\varepsilon$ by $T_{j,n}$ in these neighborhoods, use a partition of unity to join it to $Z-g_n$ outside them, and set it equal to zero in the smaller contact balls.  Call the resulting function $F_{n,\varepsilon}$ and put $Z_{n,\varepsilon}:=g_n+F_{n,\varepsilon}$.  Then $F_{n,\varepsilon}=0$ quasi-everywhere on every moving disk, so $Z_{n,\varepsilon}=g_n$ there.  Estimates under the $C^1$ changes of variables, together with~\eqref{eq:log-cutoff-energy}, give
\[
 \limsup_{n\to\infty}\|Z_{n,\varepsilon}-Z\|_{H^1(B_M)}
 \le \omega_Z(\varepsilon)+\frac{C_Z}{\sqrt{|\log\varepsilon|}},
\]
where $\omega_Z(\varepsilon)\to0$.  Letting $\varepsilon\downarrow0$ and taking a diagonal sequence $Z_n:=Z_{n,\varepsilon_n}$ proves~\eqref{eq:trace-recovery}.  The same construction also proves that isolated tangencies do not change the affine trace class.

We now prove the upper bound.  Take arbitrary sequences $r_n\downarrow0$ and $\theta_n\to\theta_*$.  Let $Z_*$ minimize $\mathfrak C(K_{\theta_*},P)$.  Outside a fixed ball, $Z_*$ is harmonic with finite Dirichlet energy.  Kelvin expansion, or separation of variables, shows that it has a finite limit at infinity and no logarithmic term.  For $R$ large enough that $K_{\theta_*}\subset B_R$, define
\[
 \chi_R(X)=
 \begin{cases}
 1,&|X|\le R,\\
 \displaystyle\frac{\log(R^2/|X|)}{\log R},&R<|X|<R^2,\\
 0,&|X|\ge R^2.
 \end{cases}
\]
Then $\chi_RZ_*=P$ on $K_{\theta_*}$ and
\begin{equation}\label{eq:log-truncation-convergence}
 \int_{\mathbb R^2}|\nabla(\chi_RZ_*)|^2
 \longrightarrow
 \int_{\mathbb R^2}|\nabla Z_*|^2.
\end{equation}
Indeed, the annular cutoff energy is $O((\log R)^{-1})$, while the tail energy of $Z_*$ tends to zero.  Apply~\eqref{eq:trace-recovery} to $Z=\chi_RZ_*$ and $g_n=P_{r_n}$, making the recovery equal to zero outside $B_{R^2+1}$.  Since $B_{R^2+1}\subset\Omega_{r_n}$ for large $n$, the recovered functions are admissible in~\eqref{eq:scaled-capacity}.  Hence
\[
 \limsup_{n\to\infty}r_n^{-2k}
 \operatorname{Cap}_{\Omega}(x_0+r_nK_{\theta_n},u)
 \le \int_{\mathbb R^2}|\nabla(\chi_RZ_*)|^2.
\]
Letting $R\to\infty$ and using~\eqref{eq:log-truncation-convergence} gives the upper bound.

For the lower bound, the upper construction supplies a uniform energy bound.  Every $K_{\theta_n}$ contains a disk whose radius is bounded below.  Since $W_{r_n,\theta_n}=P_{r_n}$ on that disk, a Poincar\'e inequality anchored on a set of fixed positive measure fixes the additive constant and gives, for every $M$,
\[
 \|W_{r_n,\theta_n}\|_{H^1(B_M)}\le C_M.
\]
After a diagonal extraction,
\[
 W_{r_n,\theta_n}\rightharpoonup W
 \quad\text{in }H^1_{\rm loc}(\mathbb R^2).
\]
Every compact subset of the interior of a limiting disk is contained in the corresponding moving disk for all large $n$.  Therefore~\eqref{eq:Pr-to-P} implies $W=P$ almost everywhere in each disk interior.  The quasi-continuous trace of an $H^1$ function that agrees with $P$ in a Lipschitz disk agrees with $P$ quasi-everywhere on its closure.  The only remaining points are isolated contacts and have zero planar $H^1$-capacity.  Thus $W=P$ quasi-everywhere on $K_{\theta_*}$.  For every fixed $M$, weak lower semicontinuity gives
\[
 \int_{B_M}|\nabla W|^2
 \le\liminf_{n\to\infty}
 \int_{\Omega_{r_n}}|\nabla W_{r_n,\theta_n}|^2.
\]
Letting $M\to\infty$ yields
\[
 \liminf_{n\to\infty}r_n^{-2k}
 \operatorname{Cap}_{\Omega}(x_0+r_nK_{\theta_n},u)
 \ge \int_{\mathbb R^2}|\nabla W|^2
 \ge \mathfrak C(K_{\theta_*},P).
\]

Applying the same recovery and compactness arguments with $r_n$ absent proves continuity of $\theta\mapsto\mathfrak C(K_\theta,P)$.  The sequential upper and lower bounds just proved show uniformity on compact $\Theta$: otherwise a violating sequence $r_n\downarrow0$, $\theta_n\in\Theta$ would have a convergent parameter subsequence and contradict the corresponding bounds.  In particular, positivity and continuity imply
\begin{equation}\label{eq:cell-capacity-uniform-positive}
 0<c_\Theta\le \mathfrak C(K_\theta,P)\le C_\Theta.
\end{equation}
Finally, apply~\cite[Theorem~1.4]{AFHL2019} to any sequence $x_0+r_nK_{\theta_n}$ and the simple limiting eigenvalue.  Its relative remainder is $o(\operatorname{Cap}_\Omega)$ along that sequence.  Combining this fact with the already uniform capacity expansion and~\eqref{eq:cell-capacity-uniform-positive} gives a uniform $o(r^{2k})$ eigenvalue remainder by contradiction.
\end{proof}

The $L^2(\widetilde\Sigma)$-normalized eigenfunction is $2^{-1/2}\widetilde u_0$. Near the concentration point $p_\xi:=(\xi,0)$ one has
\[
2^{-1/2}\widetilde u_0(\xi+X,T)
=
\frac{\pi}{2\sqrt2}\cos\frac{\pi\xi}{2}\,T+O\bigl(|(X,T)|^2\bigr),
\]
so the first nonzero homogeneous term is
\[
P_\xi(X,T)=a_\xi T,
\qquad
 a_\xi:=\frac{\pi}{2\sqrt2}\cos\frac{\pi\xi}{2}.
\]

\begin{proposition}\label{prop:side-away-from-corners}
Fix $\delta\in(0,1)$.  Then there exists a constant $\Gamma_K>0$, depending only on the reflected shape $K$, such that
\[
\lambda_r^{\mathrm{side}}(\xi)
=
\frac{\pi^2}{2}
+\Gamma_K\cos^2\frac{\pi\xi}{2}\,r^2
+o_\delta(r^2)
\qquad (r\to0),
\]
uniformly for $|\xi|\le 1-\delta$.
\end{proposition}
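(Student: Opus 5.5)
The plan is to reduce the side-tangent problem to the reflected rectangle and then apply the full-capacity blow-up lemma with a degree-one polynomial. By Proposition~\ref{prop:reflection}, $\lambda_r^{\mathrm{side}}(\xi)$ is the lowest eigenvalue in the odd-in-$t$ class on $\widetilde\Sigma\setminus K_r(\xi)$. By Lemma~\ref{lem:reflected-eigenpair}, $\pi^2/2$ is a simple eigenvalue of the full Dirichlet Laplacian on $\widetilde\Sigma$, with odd eigenfunction $\widetilde u_0$. As explained after Proposition~\ref{prop:boundary-simple-eigenvalue}, for small $r$ the odd branch is therefore the unique full-spectrum branch converging to $\pi^2/2$. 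Consequently $\lambda_r^{\mathrm{side}}(\xi)$ equals the eigenvalue of that branch for $\widetilde\Sigma\setminus K_r(\xi)$.

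Next I apply Lemma~\ref{lem:full-capacity-blowup}, taking $\Omega=\widetilde\Sigma$, $u=2^{-1/2}\widetilde u_0$ and $k=1$. The parameter set is $\Theta=[-1+\delta,1-\delta]$, the base point is $x_0=p_\xi=(\xi,0)$, and the fixed shape is $K_\theta\equiv K$. The two unit disks touch only at the origin, so the tangency hypotheses are met.

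One technical adjustment is needed, because the lemma is stated for a fixed base point $x_0$ while here $x_0=p_\xi$ varies with $\xi$. I would handle this in one of two ways.
\begin{itemize}[label=\textendash]
\item Rerun the lemma's proof with $x_0=p_{\xi}$ varying. The uniform $C^1_{\rm loc}$ expansion $r^{-1}u(p_\xi+rX)=a_\xi T+O(r)$ holds uniformly in $\xi$, and $\operatorname{dist}(p_\xi,\partial\widetilde\Sigma)\ge\delta$, so every step (recovery, compactness, and the contradiction argument for uniformity) goes through with $\theta=\xi$ and $P=P_\xi$.
\item Alternatively, absorb the dependence into the polynomial. Since $P_\xi=a_\xi T$ depends continuously on $\xi$, the sequential upper and lower bounds in that proof apply along any $\xi_n\to\xi_*$, with $P_{r_n}\to P_{\xi_*}$ in $C^1(B_M)$.
\end{itemize}
Either route yields, uniformly for $|\xi|\le 1-\delta$,
\[
\lambda_r^{\mathrm{side}}(\xi)=\frac{\pi^2}{2}+r^2\,\mathfrak C(K,a_\xi T)+o_\delta(r^2).
\]

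It then remains to identify the coefficient. Because the admissible class in \eqref{eq:full-cell-capacity-definition} is affine in $P$ and $P=a_\xi T$ is linear in $a_\xi$, we have $\mathfrak C(K,a_\xi T)=a_\xi^2\,\mathfrak C(K,T)$. Substituting $a_\xi^2=\frac{\pi^2}{8}\cos^2\frac{\pi\xi}{2}$ gives the stated form with
\[
\Gamma_K:=\frac{\pi^2}{8}\,\mathfrak C(K,T).
\]
Positivity of $\Gamma_K$ follows from \eqref{eq:capacity-interior-lower} in its whole-plane form: $\mathfrak C(K,T)\ge\int_K|\nabla T|^2=2\pi>0$, and the finiteness of this capacity is guaranteed by the lemma.

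The main obstacle is the uniformity in $\xi$, which is exactly the base-point issue above. I expect it to be routine, since the contradiction argument in Lemma~\ref{lem:full-capacity-blowup} only uses sequential compactness of the parameters together with $C^1$ convergence of the rescaled eigenfunctions, and both remain valid uniformly on $|\xi|\le1-\delta$. The final step, passing from the uniform capacity expansion to a uniform eigenvalue expansion, is handled as in the lemma. One applies \cite[Theorem~1.4]{AFHL2019} along an arbitrary sequence $(r_n,\xi_n)$ and argues by contradiction, using the uniform bounds $c\le a_\xi^2\,\mathfrak C(K,T)\le C$ on the compact $\xi$-range.
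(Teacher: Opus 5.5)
Your proposal is correct and follows the paper's proof: odd reflection to $\widetilde\Sigma$, simplicity of $\pi^2/2$, Lemma~\ref{lem:full-capacity-blowup} with $k=1$ and $P_\xi=a_\xi T$, and the scaling $\mathfrak C(K,a_\xi T)=a_\xi^2\,\mathfrak C(K,T)$, which gives $\Gamma_K=\frac{\pi^2}{8}\mathfrak C(K,T)$. Two of your steps are additions to the paper's proof rather than a change of route: the lemma as stated does not cover a moving base point $p_\xi$, yet the paper applies it uniformly in $\xi$ without comment, and you handle this explicitly; you also justify positivity through the bound $\mathfrak C(K,T)\ge 2\pi$.
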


\begin{proof}
By Proposition~\ref{prop:reflection}, $\lambda_r^{\mathrm{side}}(\xi)$ is the odd branch of the Dirichlet spectrum of $\widetilde\Sigma\setminus K_r(\xi)$.  The corresponding limiting eigenvalue is simple, and the normalized reflected eigenfunction has the local profile
\[
P_\xi(X,T)=\frac{\pi}{2\sqrt2}\cos\frac{\pi\xi}{2}\,T.
\]
Lemma~\ref{lem:full-capacity-blowup}, applied to the fixed two-disk set $K$, yields
\[
\lambda_r^{\mathrm{side}}(\xi)-\frac{\pi^2}{2}
=\frac{\pi^2}{8}\,\mathfrak C(K,T)\cos^2\frac{\pi\xi}{2}\,r^2+o_\delta(r^2)
\]
uniformly for $|\xi|\le1-\delta$.  The claim follows with
\[
\Gamma_K:=\frac{\pi^2}{8}\mathfrak C(K,T)>0.
\]
\end{proof}

\begin{corollary}\label{cor:side-minimizers-approach-corners}
Let $\xi_r\in[-1+r,1-r]$ minimize $\lambda_r^{\mathrm{side}}(\xi)$ over the side-tangent branch.  Then
\[
|\xi_r|\to 1
\qquad\text{as }r\to0.
\]
\end{corollary}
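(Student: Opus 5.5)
We argue by contradiction, comparing the minimal value with explicit competitors near a corner. Suppose there are $r_n\to0$ and minimizers $\xi_n:=\xi_{r_n}$ with $|\xi_n|\le 1-\delta$ for some fixed $\delta\in(0,1)$, along a subsequence. By Proposition~\ref{prop:side-away-from-corners}, applied with this $\delta$,
\[
 \lambda_{r_n}^{\mathrm{side}}(\xi_n)-\frac{\pi^2}{2}
 =\Gamma_K\cos^2\frac{\pi\xi_n}{2}\,r_n^2+o(r_n^2)
 \ge \Gamma_K\sin^2\frac{\pi\delta}{2}\,r_n^2+o(r_n^2).
\]
The last inequality uses $\cos\frac{\pi\xi}{2}\ge\cos\frac{\pi(1-\delta)}{2}=\sin\frac{\pi\delta}{2}$ for $|\xi|\le1-\delta$. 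The lower bound is therefore a fixed positive multiple of $r_n^2$.

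For the upper bound, we use a competitor near the corner. We choose $\eta\in(0,\delta)$ small, to be fixed shortly, and evaluate at $\xi=1-\eta$. For $r_n<\eta$ this competitor is admissible, since $1-\eta\le1-r_n$. Proposition~\ref{prop:side-away-from-corners}, now applied with $\eta$ in place of $\delta$, gives
\[
 \lambda_{r_n}^{\mathrm{side}}(1-\eta)-\frac{\pi^2}{2}
 =\Gamma_K\sin^2\frac{\pi\eta}{2}\,r_n^2+o_\eta(r_n^2).
\]
We fix $\eta$ so that $\sin^2\frac{\pi\eta}{2}<\frac12\sin^2\frac{\pi\delta}{2}$. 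Minimality of $\xi_n$ then gives
\[
 \Gamma_K\sin^2\tfrac{\pi\delta}{2}\,r_n^2+o(r_n^2)\le\tfrac12\Gamma_K\sin^2\tfrac{\pi\delta}{2}\,r_n^2+o(r_n^2).
\]
Since $\Gamma_K>0$, dividing by $r_n^2$ and letting $n\to\infty$ yields a contradiction. Hence $\limsup|\xi_r|\ge1-\delta$ is not violated along any subsequence, i.e.\ $|\xi_r|>1-\delta$ for all small $r$. Because $\delta$ is arbitrary, $|\xi_r|\to1$.

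The only delicate point is the uniformity of the remainder in Proposition~\ref{prop:side-away-from-corners}. The minimizers $\xi_n$ move with $n$, so a pointwise expansion would not suffice. Uniformity on $|\xi|\le1-\delta$, together with a separate application at the single fixed point $1-\eta$, is exactly what the argument needs. We never need expansions uniform up to the corner, where the profile $a_\xi T$ degenerates and the scaling changes to $r^4$. This is why we compare against an interior point $1-\eta$ rather than against the true corner $\xi=1-r$. No positivity of $\mathfrak C(K,T)$ beyond $\Gamma_K>0$, which the proposition already provides, is required.
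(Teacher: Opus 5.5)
Your argument is correct and is essentially the paper's proof: assume $|\xi_{r_j}|\le 1-\delta$ along a sequence, apply the uniform expansion of Proposition~\ref{prop:side-away-from-corners} both at the moving minimizers and at a fixed competitor closer to the corner, and use $\Gamma_K>0$ together with the strict decrease of $\cos^2\frac{\pi\xi}{2}$ as $\xi\to1$. The paper simply takes the competitor $1-\delta/2$, because strict inequality of the leading coefficients already suffices, so your factor-$\tfrac12$ margin in choosing $\eta$ is harmless but unnecessary.
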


\begin{proof}
Assume not. Then there exist $\delta>0$ and a sequence $r_j\to0$ such that $|\xi_{r_j}|\le 1-\delta$ for all $j$.  For all sufficiently large $j$, the competitor
\[
\widehat\xi_{r_j}:=1-\delta/2
\]
is admissible, and Proposition~\ref{prop:side-away-from-corners} applies both at $\xi_{r_j}$ and at $\widehat\xi_{r_j}$.  Since
\[
\cos^2\frac{\pi(1-\delta/2)}{2}<\cos^2\frac{\pi(1-\delta)}{2},
\]
we get $\lambda_{r_j}^{\mathrm{side}}(\widehat\xi_{r_j})<\lambda_{r_j}^{\mathrm{side}}(\xi_{r_j})$ for large $j$, a contradiction.
\end{proof}

\subsection{Endpoint scaling and double reflection}

For each fixed $a\ge 1$ and all sufficiently small $r>0$, define the endpoint-scaled configuration
\[
\Omega_{r,a}:=Q\setminus \overline{B_r(1-ar,1-r)}.
\]
Thus $a=1$ represents true corner tangency, while $a>1$ corresponds to a side-tangent hole whose tangency point is $O(r)$ away from the corner.

Reflect oddly across $y=1$ and then oddly across $x=1$.  The outer domain becomes
\[
\widehat Q:=(-1,3)^2,
\]
and the single hole becomes the interior compact defect
\[
\widehat K_{r,a}=(1,1)+rK_a,
\]
where
\[
K_a:=
\bigcup_{\sigma_1,\sigma_2\in\{\pm1\}}
\overline{B_1\bigl((\sigma_1 a,\sigma_2)\bigr)}.
\]
The reflected eigenfunction is still
\[
\widehat u_0(x,y)=\cos\frac{\pi x}{2}\cos\frac{\pi y}{2},
\qquad
\lambda_0=\frac{\pi^2}{2},
\]
and this eigenvalue is simple on $\widehat Q$: indeed the Dirichlet spectrum of $(-1,3)^2$ is $(\pi^2/16)(m^2+n^2)$, and $m^2+n^2=8$ has the unique positive solution $(m,n)=(2,2)$.
Near $(1,1)$,
\[
\widehat u_0(1+rX,1+rY)
=
\frac{\pi^2}{4}r^2XY+O(r^4),
\]
so the first nonzero homogeneous term of the unnormalized reflected eigenfunction is $(\pi^2/4)XY$.  Since $\|\widehat u_0\|_{L^2(\widehat Q)}=2$, the corresponding term for the normalized eigenfunction is $P(X,Y)=(\pi^2/8)XY$.

\begin{proposition}\label{prop:endpoint-asymptotic}
Let $A\ge1$ be fixed. Then, uniformly for $a\in[1,A]$,
\[
\lambda_1(\Omega_{r,a})
=
\frac{\pi^2}{2}
+\frac{\pi^4}{64}\,\mathfrak M(a)\,r^4
+o_A(r^4)
\qquad (r\to0),
\]
where
\[
\mathfrak M(a):=\mathfrak C(K_a,XY).
\]
In particular, for each fixed $a\ge1$ the same expansion holds with $o(r^4)$.
\end{proposition}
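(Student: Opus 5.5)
We derive Proposition~\ref{prop:endpoint-asymptotic} from Lemma~\ref{lem:full-capacity-blowup}, applied in the doubly reflected square $\widehat Q=(-1,3)^2$ with $x_0=(1,1)$ and $k=2$.

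\emph{Step 1: reduce to the reflected square.} Iterate Proposition~\ref{prop:reflection} across $y=1$ and then across $x=1$. This identifies $\lambda_1(\Omega_{r,a})$ with the Rayleigh minimum over functions in $H_0^1(\widehat Q\setminus\widehat K_{r,a})$ that are odd in both $X$ and $Y$. The limiting eigenvalue $\pi^2/2$ is the simple fourth eigenvalue of $\widehat Q$, and its eigenfunction $\widehat u_0$ is odd--odd about $(1,1)$. Therefore, for small $r$, the unique eigenvalue branch of $\widehat Q\setminus\widehat K_{r,a}$ converging to $\pi^2/2$ has an eigenfunction in the odd--odd symmetry class. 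Indeed, the defect is invariant under both reflections, so by simplicity the branch eigenfunction has a definite parity in each variable, and $L^2$-closeness to $\widehat u_0$ fixes that parity as odd--odd. Meanwhile $\lambda_1(\Omega_{r,a})\to\pi^2/2$ by Lemma~\ref{lem:existence-continuity}-type Mosco convergence. Because the other odd--odd eigenvalues stay near $(\pi^2/16)(m^2+n^2)$ with $m,n$ even and $(m,n)\neq(2,2)$, that is, at least $\pi^2\cdot 20/16$, the lowest odd--odd eigenvalue is exactly this branch. Hence $\lambda_1(\Omega_{r,a})=\lambda_4(\widehat Q\setminus\widehat K_{r,a})$.

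\emph{Step 2: check the hypotheses of Lemma~\ref{lem:full-capacity-blowup}.} Take $\Theta=[1,A]$ and $K_\theta=K_a$, the union of four unit disks centered at $(\pm a,\pm1)$. The centers depend continuously on $a$, the radii equal $1$, and all four disks lie in $B_{A+2}$. Their interiors are pairwise disjoint because the horizontal center distance is $2a\ge2$ and the vertical distance is $2$. At $a=1$ there are tangencies, which are isolated points, and the lemma explicitly allows them.

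The normalized eigenfunction $\widehat u_0/2$ satisfies
\[
\tfrac12\widehat u_0(1+rX,1+rY)=\tfrac12\sin\tfrac{\pi rX}{2}\sin\tfrac{\pi rY}{2}=r^2\tfrac{\pi^2}{8}XY+O(r^4)
\]
in $C^1_{\rm loc}$, which is stronger than the required $O(r^3)$. The polynomial $P=(\pi^2/8)XY$ is homogeneous harmonic of degree $2$.

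\emph{Step 3: apply the lemma.} Lemma~\ref{lem:full-capacity-blowup} gives
\[
\lambda_4(\widehat Q\setminus\widehat K_{r,a})=\tfrac{\pi^2}{2}+r^4\,\mathfrak C\!\bigl(K_a,\tfrac{\pi^2}{8}XY\bigr)+o_A(r^4)
\]
uniformly for $a\in[1,A]$. Since $\mathfrak C$ is quadratic in $P$ (the minimizer scales linearly with the boundary datum), $\mathfrak C(K_a,cP)=c^2\mathfrak C(K_a,P)$. This gives the coefficient $\pi^4/64\,\mathfrak M(a)$.

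\emph{Main obstacle.} The delicate point is Step 1: making sure the eigenvalue branch in Lemma~\ref{lem:full-capacity-blowup}, which is defined via the full spectrum of $\widehat Q\setminus\widehat K_{r,a}$, really is the square's ground state, uniformly in $a$ and including the tangent case $a=1$. I would handle this with the symmetry-class argument above, using the spectral gap within the odd--odd class. I would also note that tangency between the reflected disks, and between the original disk and $\partial Q$, adds only zero-capacity points, so the reflected form domain is the correct one.
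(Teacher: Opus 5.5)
Your proposal is correct and follows the paper's proof. The paper likewise doubly reflects to $\widehat Q$ and applies the full-capacity blow-up lemma with $k=2$ and $P=(\pi^2/8)XY$, uniformly over $[1,A]$ including the tangent endpoint. Quadratic scaling of $\mathfrak C$ then gives the coefficient $\pi^4\mathfrak M(a)/64$. Your Step 1, which identifies the branch through the odd--odd symmetry class and the gap up to $20\pi^2/16$, spells out what the paper records once in its general remark on reflected branches following Proposition~\ref{prop:boundary-simple-eigenvalue}.
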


\begin{proof}
After the double reflection, the problem is an interior compact-defect perturbation of the simple eigenvalue $\lambda_0$ on $\widehat Q$.  The function $\widehat u_0$ has $L^2(\widehat Q)$-norm $2$, so the normalized eigenfunction is $\widehat u_0/2$ and
\[
\frac12\widehat u_0(1+rX,1+rY)
=\frac{\pi^2}{8}r^2XY+O(r^4).
\]
Lemma~\ref{lem:full-capacity-blowup} therefore gives the coefficient
$(\pi^2/8)^2\mathfrak C(K_a,XY)=\pi^4\mathfrak M(a)/64$.  Its compact-uniform version applies to $a\in[1,A]$, including the tangent endpoint $a=1$.
\end{proof}

\subsection{Reduction to the quadrant cell}

Set
\[
\Gamma:=\{(s,t)\in\mathbb R^2:s>0,\ t>0\},
\qquad
H_a:=\overline{B_1((a,1))},
\qquad
P_a:=\Gamma\setminus H_a.
\]
Let $W_a$ be the unique harmonic function in $P_a$ such that
\[
W_a=0 \text{ on }\partial\Gamma,
\qquad
W_a=st \text{ on }\partial H_a,
\qquad
\int_{P_a}|\nabla W_a|^2<\infty,
\]
and define
\[
\mathcal E(a):=\int_{P_a}|\nabla W_a|^2\,ds\,dt.
\]

Define the full one-hole corner capacity
\[
\mathcal C(a):=\mathcal E(a)+\int_{H_a}|\nabla(st)|^2\,ds\,dt
=\mathcal E(a)+\pi\Bigl(a^2+\frac32\Bigr).
\]

\begin{proposition}\label{prop:M-equals-4E}
For every $a\ge1$,
\[
\mathfrak M(a)=4\mathcal C(a).
\]
Consequently,
\[
\lambda_1(\Omega_{r,a})
=
\frac{\pi^2}{2}
+\frac{\pi^4}{16}\,\mathcal C(a)\,r^4
+o(r^4)
\qquad (r\to0).
\]
\end{proposition}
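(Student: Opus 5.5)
We prove $\mathfrak M(a)=4\mathcal C(a)$ by using the symmetry of the minimizer of $\mathfrak C(K_a,XY)$ to reduce to one quadrant. The set $K_a$ is invariant under both reflections $X\mapsto -X$ and $Y\mapsto -Y$. The datum $XY$ is odd under each of them. Let $Z$ be the unique minimizer in~\eqref{eq:full-cell-capacity-definition}. Then $-Z(-X,Y)$ is also admissible and has the same energy, so uniqueness gives that $Z$ is odd in $X$; in the same way it is odd in $Y$. Two facts need care here. First, $\dot H^1$ is defined modulo constants, with the representative fixed by the condition on $K_a$. That condition pins down the additive constant, and the reflected function satisfies the same condition, so uniqueness applies without ambiguity. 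Second, an odd function in $H^1_{\rm loc}$ has vanishing quasi-continuous trace on the axes. We will check this by noting that $Z+Z\circ\sigma=0$ and taking traces on the line of symmetry.

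Next, split the energy into quadrants. Because of the oddness, $|\nabla Z|^2$ is even in each variable, so
\[
\int_{\mathbb R^2}|\nabla Z|^2=4\int_\Gamma|\nabla Z|^2 .
\]
In $\Gamma$, the function $Z$ equals $st$ on $H_a$ (in the coordinates $s=X$, $t=Y$). Therefore
\[
\int_\Gamma|\nabla Z|^2=\int_{H_a}|\nabla(st)|^2+\int_{P_a}|\nabla Z|^2 .
\]
The first term is computed directly. We have $|\nabla(st)|^2=s^2+t^2$, and integrating over the unit disk centred at $(a,1)$ gives $\pi(a^2+1)+\pi/2=\pi(a^2+3/2)$.

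It remains to identify $Z|_{P_a}$ with $W_a$. By construction, $Z|_{P_a}$ is harmonic in $P_a$, vanishes on $\partial\Gamma$ by oddness, equals $st$ on $\partial H_a$, and has finite energy. So $\int_{P_a}|\nabla Z|^2\ge\mathcal E(a)$, since $W_a$ is the energy minimizer in this class (the finite-energy Dirichlet problem has a unique solution). For the reverse inequality, take $W_a$, set it equal to $st$ on $H_a$, and extend it oddly to all of $\mathbb R^2$. The result is an admissible competitor for $\mathfrak C(K_a,XY)$ with energy $4\mathcal C(a)$, so $\mathfrak M(a)\le 4\mathcal C(a)$. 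Combining the two inequalities gives $\mathfrak M(a)=4\mathcal C(a)$.

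The step we expect to need the most care is the admissibility of these glued and reflected competitors in the Beppo--Levi setting. There are two issues. Gluing across the axes requires zero traces there, which holds for $W_a$. For $a=1$ the disk $H_1$ touches both axes at $(1,0)$ and $(0,1)$, and in $K_1$ neighbouring disks are tangent there. As in Lemma~\ref{lem:full-capacity-blowup}, these are isolated points of zero capacity. Moreover, $st$ vanishes on the axes, so the boundary data are compatible and no conflict arises at the contact points.

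Finally, the eigenvalue expansion follows from Proposition~\ref{prop:endpoint-asymptotic}: substituting $\mathfrak M(a)=4\mathcal C(a)$ turns the coefficient $\frac{\pi^4}{64}\mathfrak M(a)$ into $\frac{\pi^4}{16}\mathcal C(a)$.
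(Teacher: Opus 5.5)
Your proposal is correct and follows the paper's own argument. Uniqueness of the minimizer, together with the double odd symmetry of $K_a$ and $XY$, makes the minimizer odd in each variable. Its restriction to the first quadrant is then $W_a$ on $P_a$ and $st$ on $H_a$, and multiplying by four and substituting into Proposition~\ref{prop:endpoint-asymptotic} gives the expansion. The paper identifies the restriction with $W_a$ directly by uniqueness of the finite-energy Dirichlet problem, whereas you use a two-sided energy comparison; this is an equivalent variant.
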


\begin{proof}
Let $\Phi_a$ be the minimizer defining $\mathfrak C(K_a,XY)$.  By uniqueness and the symmetries of $K_a$ and $XY$, $\Phi_a$ is odd in each coordinate.  Its restriction to the first quadrant is precisely $W_a$ outside $H_a$ and equals $st$ on $H_a$.  Therefore
\[
\mathfrak M(a)
=4\left(\int_{P_a}|\nabla W_a|^2+\int_{H_a}|\nabla(st)|^2\right)
=4\mathcal C(a).
\]
Substitution into Proposition~\ref{prop:endpoint-asymptotic} gives the asserted eigenvalue expansion.
\end{proof}

\subsection{Far-field coefficient and monotonicity of the full capacity}

\begin{lemma}[Quadrant far-field expansion]\label{lem:quadrant-far-field}
Let $h$ be harmonic outside a compact subset of $\Gamma$, vanish on the
axes, and satisfy $\nabla h\in L^2$.  Then
\begin{align*}
 h(r,\theta)&=\alpha_1r^{-2}\sin(2\theta)+O(r^{-4}),\\
 \partial_r h(r,\theta)&=-2\alpha_1r^{-3}\sin(2\theta)+O(r^{-5}),\\
 r^{-1}\partial_\theta h(r,\theta)&=2\alpha_1r^{-3}\cos(2\theta)+O(r^{-5})
\end{align*}
as $r\to\infty$, uniformly for $\theta\in[0,\pi/2]$, where the values on the axes are understood through the odd reflections.  If $h$ is nonnegative and nonzero, then $\alpha_1>0$.
\end{lemma}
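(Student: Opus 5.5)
We reduce the statement to a Fourier expansion on the full plane by reflecting oddly across both axes. Let $\widetilde h$ be the function obtained by odd reflection of $h$ across $s=0$ and then across $t=0$. Since $h$ vanishes on the axes and has finite Dirichlet energy near them, $\widetilde h$ belongs to $H^1_{\rm loc}$ outside a compact set $L\subset\mathbb R^2$. It is weakly harmonic there by the standard reflection principle, and it satisfies $\nabla\widetilde h\in L^2(\mathbb R^2\setminus B_{\rho_0})$ for some $\rho_0$ with $L\subset B_{\rho_0}$. By construction $\widetilde h$ is odd in each coordinate.

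On $|x|>\rho_0$ we expand in polar coordinates:
\[
\widetilde h(r,\theta)=a_0+b_0\log r+\sum_{n\ge1}\bigl(a_nr^n+a_n'r^{-n}\bigr)\cos n\theta+\bigl(b_nr^n+b_n'r^{-n}\bigr)\sin n\theta .
\]
The energy on an annulus $\rho_0<r<\rho$ is a sum of nonnegative modal contributions. The $\log$ mode contributes $2\pi b_0^2\log(\rho/\rho_0)$, and each growing mode $r^n$ contributes an amount comparable to $n\rho^{2n}$. Finiteness of the energy therefore forces $b_0=0$ and all growing coefficients to vanish, so that $\widetilde h=a_0+\sum_n r^{-n}(\cdots)$. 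Oddness under $\theta\mapsto-\theta$ and $\theta\mapsto\pi-\theta$ then kills $a_0$ and the cosine modes, and keeps only $\sin n\theta$ with $\sin n(\pi-\theta)=-\sin n\theta$, that is, with $n$ even. Hence
\[
\widetilde h=\sum_{k\ge1}\beta_{2k}r^{-2k}\sin(2k\theta).
\]
We set $\alpha_1:=\beta_2$. The series converges absolutely together with its derivatives for $r\ge 2\rho_0$, since $|\beta_{2k}|\rho_0^{-2k}$ is bounded (the Fourier coefficients of the trace on $r=\rho_0$ are bounded). The tail beyond $k=1$ is therefore $O(r^{-4})$ uniformly in $\theta$. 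Differentiating term by term gives the stated formulas for $\partial_r h$ and $r^{-1}\partial_\theta h$ with $O(r^{-5})$ remainders. Uniformity up to $\theta\in\{0,\pi/2\}$ is automatic, because the expansion of $\widetilde h$ holds on the full circle.

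For the positivity claim, we compute $\alpha_1$ by integrating over the quarter circle:
\[
\alpha_1=\frac{4}{\pi}\,r^2\int_0^{\pi/2}h(r,\theta)\sin 2\theta\,d\theta .
\]
This follows from the orthogonality of $\sin 2k\theta$ on $[0,\pi/2]$: each $\sin(2k\theta)$ is odd about both $0$ and $\pi/2$, so the full-circle orthogonality restricts to the quarter interval. If $h\ge0$, then the right-hand side is $\ge0$, so $\alpha_1\ge0$. If $\alpha_1=0$, then $\int_0^{\pi/2}h(r,\theta)\sin2\theta\,d\theta=0$ for every large $r$, which forces $h\equiv0$ on these circles. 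Unique continuation for harmonic functions on the connected exterior region then gives $h\equiv0$. Here connectedness of $\Gamma$ minus the compact set is not needed, because we only need $h\equiv0$ near infinity and "nonzero" is interpreted there; if needed, one reads the hypothesis as $h$ nonzero near infinity, equivalently on the unbounded component. This contradicts $h$ being nonzero, so $\alpha_1>0$.

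The main obstacle is the first step: excluding the constant and logarithmic modes in two dimensions, where finite energy alone permits a constant. Here oddness removes the constant and energy removes the logarithm, so the argument goes through. The reflection regularity across the axes away from the compact set is routine.
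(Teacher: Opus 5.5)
Your proof is correct and is essentially the paper's argument. The paper performs a Kelvin inversion before the odd reflection, so that the conformally invariant finite energy makes the origin a removable singularity and a Taylor expansion yields the modes $\rho^{2m}\sin(2m\theta)$; this is the same exclusion of the logarithm and of the growing modes that you carry out directly in the exterior. Your projection formula $\alpha_1=\frac4\pi r^2\int_0^{\pi/2}h\sin2\theta\,d\theta$ replaces the paper's observation that a leading mode $\sin(2m\theta)$ with $m\ge2$ would change sign.

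Your caveat that ``nonzero'' must be read on the unbounded component is well taken. For $a=1$ the set $P_{1,1}$ has the bounded pocket component $D$, and a nonnegative admissible $h$ supported there would have $\alpha_1=0$. The paper's ``first nonzero coefficient'' step tacitly makes the same assumption.
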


\begin{proof}
Apply Kelvin inversion $X\mapsto X/|X|^2$ and then reflect oddly across both coordinate axes.  The transformed function is harmonic on a punctured disk and belongs to $H^1$ there because the planar Dirichlet integral is conformally invariant.  A logarithmic term is excluded by finite Dirichlet energy, and the isolated origin is removable in the weak $H^1$ sense.  Oddness in both variables excludes the constant term and every angular mode except
$\rho^{2m}\sin(2m\theta)$, $m\ge1$.  Taylor expansion at the origin and inversion back therefore give, uniformly up to the reflected axes,
\[
 h(r,\theta)=\sum_{m\ge1}\alpha_m r^{-2m}\sin(2m\theta)
\]
for large $r$.  Termwise differentiation gives the two derivative expansions with the stated uniform remainders.  If $h\ge0$ is nonzero and its first nonzero coefficient corresponded to $m\ge2$, the leading angular factor $\sin(2m\theta)$ would take both signs in $(0,\pi/2)$, contradicting nonnegativity for large $r$.  Hence $m=1$ and $\alpha_1>0$.
\end{proof}

Let $c(a)$ be the leading far-field coefficient of $W_a$:
\[
W_a(r,\theta)=c(a)r^{-2}\sin(2\theta)+O(r^{-4})
\qquad (r\to\infty,\ 0<\theta<\tfrac{\pi}{2}).
\]
Green's identity with $q(s,t)=st$ gives
\begin{equation}\label{eq:E-from-c}
\mathcal E(a)=\frac{\pi}{2}c(a)-\pi\Bigl(a^2+\frac32\Bigr),
\end{equation}
and hence
\begin{equation}\label{eq:C-from-c-side}
 \mathcal C(a)=\frac{\pi}{2}c(a).
\end{equation}
Indeed, the axis terms vanish, the outer arc contributes $-(\pi/2)c(a)$, and $\int_{H_a}|\nabla(st)|^2=\pi(a^2+3/2)$.

Let $J_a$ be the finite-energy harmonic function in $P_a$ with value $0$ on the axes and value $t$ on $\partial H_a$, and write
\[
J_a(r,\theta)=d(a)r^{-2}\sin(2\theta)+O(r^{-4}).
\]
The maximum principle gives $J_a>0$, and Lemma~\ref{lem:quadrant-far-field} therefore implies
\begin{equation}\label{eq:d-positive}
d(a)>0.
\end{equation}

\begin{proposition}[Shifted comparison]\label{prop:shifted-comparison}
If $b=a+\delta$ with $\delta>0$, then
\[
W_b(s+\delta,t)\ge W_a(s,t)+\delta J_a(s,t)
\qquad ((s,t)\in P_a),
\]
and consequently
\[
c(b)\ge c(a)+\delta d(a)>c(a).
\]
\end{proposition}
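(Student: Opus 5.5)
The plan is to compare the two functions on the common domain $P_a$ with the maximum principle for harmonic functions of finite Dirichlet energy, and then to read off the far-field coefficients.

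\textbf{Comparison function.} Set
\[
 F(s,t):=W_b(s+\delta,t)-W_a(s,t)-\delta J_a(s,t),\qquad (s,t)\in P_a .
\]
The translated domain satisfies $P_b-(\delta,0)=\{s>-\delta,\,t>0\}\setminus H_a\supset P_a$, so $W_b(\cdot+\delta,\cdot)$ is defined and harmonic on $P_a$. Hence $F$ is harmonic in $P_a$ and has finite Dirichlet energy.

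\textbf{Boundary values of $F$.} On $\partial H_a$ the translated disk is $H_b$, so
\[
 F=(s+\delta)t-st-\delta t=0 .
\]
On the axis $t=0$ every term vanishes. On the axis $s=0$ we have $W_a=J_a=0$ and $F=W_b(\delta,t)$. The function $W_b\ge0$ on $P_b$ by the maximum principle, since its boundary data are $0$ and $st\ge0$ and it has finite energy. Therefore $F\ge0$ on $\partial P_a$.

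\textbf{Sign of $F$ inside.} Apply the maximum principle to $F^-$. Because $F^-$ vanishes on $\partial P_a$ and $\nabla F\in L^2$, testing the weak harmonicity of $F$ with $F^-$ gives $\int|\nabla F^-|^2=0$. So $F^-$ is constant, and the constant is zero by the boundary values.

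\emph{Main obstacle.} This test is only legitimate if $F^-$ belongs to the closure of compactly supported functions in the finite-energy class, i.e.\ if no constant at infinity escapes. The decay in Lemma~\ref{lem:quadrant-far-field} supplies this. The functions $W_a$, $J_a$ and $W_b$ are $O(r^{-2})$ at infinity: $W_b$ is harmonic outside a compact subset of its own quadrant and vanishes on its axes, so the lemma applies to it directly. Hence $F\to0$ at infinity, and the argument can be run on $P_a\cap B_\rho$ with $\rho\to\infty$. The inequality $F\ge -\varepsilon$ then holds on $P_a\cap B_\rho$ for all large $\rho$, which gives $F\ge0$.

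\textbf{Extraction of coefficients.} Expand $W_b(s+\delta,t)$ at infinity. In polar coordinates $(s+\delta)t/|(s+\delta,t)|^4=st/r^4+O(r^{-3})$, so
\[
 W_b(s+\delta,t)=c(b)r^{-2}\sin 2\theta+O(r^{-3})
\]
uniformly in $\theta$, and hence
\[
 F=\bigl(c(b)-c(a)-\delta d(a)\bigr)r^{-2}\sin 2\theta+O(r^{-3}).
\]
Evaluating at $\theta=\pi/4$ and using $F\ge0$ gives
\[
 c(b)\ge c(a)+\delta d(a).
\]
Strict inequality $c(b)>c(a)$ then follows from~\eqref{eq:d-positive}.
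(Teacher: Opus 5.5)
Your proposal is correct and follows the paper's proof essentially step for step. It uses the same comparison function $F$, the same boundary check, the maximum principle on the truncated domains $P_a\cap B_R$ driven by the uniform $O(R^{-2})$ decay from Lemma~\ref{lem:quadrant-far-field}, and a comparison of leading coefficients, where you usefully make explicit the $O(r^{-3})$ error from translating $W_b$ that the paper leaves implicit.

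One caveat comes from the paper's own definition of $J_a$ rather than from you. At $a=1$ the disk $H_1$ touches the axis $s=0$ at $(0,1)$, where the data of $J_1$ jump from $0$ to $1$ across two thin cusps, so $J_1$ (and hence $F$) has infinite Dirichlet energy near that point. The comparison still holds if $J_1$ is taken as the bounded Perron solution and your weak $F^-$ test is replaced by the classical maximum principle for bounded harmonic functions, for which a single boundary point is negligible.
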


\begin{proof}
Set
\[
 F(s,t):=W_b(s+\delta,t)-W_a(s,t)-\delta J_a(s,t).
\]
On $\partial H_a$ one has $F=0$.  On $t=0$ all three terms vanish, while on $s=0$ one has $F(0,t)=W_b(\delta,t)\ge0$.  The far-field expansions, uniform up to the axes by Lemma~\ref{lem:quadrant-far-field}, give $F=O(R^{-2})$ on $\partial B_R\cap\Gamma$.  Hence there is a constant $C$ independent of $R$ such that $F+C/R^2\ge0$ on the artificial outer boundary.  The maximum principle on $P_a\cap B_R$ gives $F\ge-C/R^2$ there.  Sending $R\to\infty$ proves the pointwise comparison.  Comparing the first far-field coefficients yields the result, with strictness from~\eqref{eq:d-positive}.
\end{proof}

\begin{corollary}[True-corner minimization of the one-hole cell]\label{cor:endpoint-family-corner-best}
The full coefficient $\mathcal C(a)$ is strictly increasing on $[1,\infty)$. Hence true corner tangency uniquely minimizes the leading coefficient within the endpoint scaling family.
\end{corollary}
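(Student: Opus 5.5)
The corollary follows directly from the identity \eqref{eq:C-from-c-side}, $\mathcal C(a)=\frac{\pi}{2}c(a)$, together with the strict shifted comparison of Proposition~\ref{prop:shifted-comparison}. The plan is to fix $1\le a<b$, put $\delta:=b-a>0$, and invoke Proposition~\ref{prop:shifted-comparison} to get $c(b)\ge c(a)+\delta d(a)$. Here $d(a)>0$ by \eqref{eq:d-positive}, so $c(b)>c(a)$. Multiplying by $\pi/2$ then gives $\mathcal C(b)>\mathcal C(a)$, which is strict monotonicity on $[1,\infty)$. Since the family is parametrized by $a\ge1$ and $a=1$ is the left endpoint, the unique minimizer of $\mathcal C$ on $[1,\infty)$ is $a=1$, i.e.\ true corner tangency.

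Since the argument is short, I would also make sure its ingredients are fully justified.

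\emph{The Green identity behind \eqref{eq:E-from-c}.} I would integrate $W_a\Delta q-q\Delta W_a$ over $P_a\cap B_R$ with $q=st$.
\begin{itemize}[label=--]
\item The axis contributions vanish because both $W_a$ and $q$ vanish there.
\item On $\partial H_a$ the boundary condition $W_a=q$ converts $\int_{P_a}|\nabla W_a|^2$ into a boundary integral. The divergence theorem inside $H_a$ then yields the interior polynomial energy $\int_{H_a}|\nabla(st)|^2=\pi(a^2+\tfrac32)$.
\item On the arc $\partial B_R\cap\Gamma$ one inserts the far-field expansion of Lemma~\ref{lem:quadrant-far-field}. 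With $q=\tfrac12R^2\sin2\theta$, this gives $\int_0^{\pi/2}(q\,\partial_rW_a-W_a\,\partial_rq)R\,d\theta\to-\tfrac{\pi}{2}c(a)$, up to sign conventions, with error $O(R^{-2})$.
\end{itemize}

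\emph{Positivity of $J_a$.} I would check it via the maximum principle, using that $t>0$ on $\partial H_a$ and that $J_a$ decays at infinity. It is nonzero, so Lemma~\ref{lem:quadrant-far-field} applies and gives $d(a)>0$.

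The only point I expect to need care is the sign bookkeeping in the Green identity. The shifted comparison needs the axis term $W_b(\delta,t)\ge0$, which I would confirm by the maximum principle: $W_b$ has boundary data $st\ge0$ on $\partial H_b$ and $0$ on the axes. With these checks done, the corollary is an immediate consequence of the earlier results.
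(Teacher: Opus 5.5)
Your proposal is correct and is exactly the paper's argument: for $1\le a<b$ the shifted comparison gives $c(b)\ge c(a)+(b-a)\,d(a)>c(a)$, and the identity $\mathcal C(a)=\frac{\pi}{2}c(a)$ turns this into strict increase of $\mathcal C$, so $a=1$ is the unique minimizer of the leading coefficient $\frac{\pi^4}{16}\mathcal C(a)$. Your supplementary checks are also sound: the arc term tends to $-\frac{\pi}{2}c(a)$, the interior energy is $\pi(a^2+\frac32)$, $J_a>0$ holds, and $W_b(\delta,t)\ge 0$ holds.
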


\begin{proof}
Combine Proposition~\ref{prop:shifted-comparison} with~\eqref{eq:C-from-c-side}.
\end{proof}

\begin{theorem}[Asymptotic classification of the one-hole side branch]\label{thm:one-hole-side-branch}
Let $\xi_r\in[-1+r,1-r]$ minimize $\lambda_r^{\mathrm{side}}(\xi)$ over the side-tangent one-hole branch. Then
\[
\frac{1-|\xi_r|}{r}\to 1
\qquad\text{as }r\to0.
\]
Equivalently, every minimizing sequence in the side-tangent one-hole branch is asymptotic, at the obstacle scale, to true corner tangency.
\end{theorem}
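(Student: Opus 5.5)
By the symmetry $x\mapsto -x$ of the square, we may assume $\xi_r\ge0$ and write $\xi_r=1-a_rr$ with $a_r\ge1$. The claim is then $a_r\to1$. We argue in three steps: an upper bound from the competitor $a=1$, a lower bound confining $a_r$ to a compact window, and a uniform comparison inside that window.

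\emph{Step 1 (upper bound).} The competitor $\xi=1-r$ is admissible. Proposition~\ref{prop:M-equals-4E} with $a=1$ gives
\[
\lambda_r^{\mathrm{side}}(\xi_r)\le\frac{\pi^2}{2}+\frac{\pi^4}{16}\mathcal C(1)r^4+o(r^4).
\]
In particular, the spectral cost of the minimizer is at most $Cr^4$ for some fixed $C$.

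\emph{Step 2 (confinement of $a_r$).} Proposition~\ref{prop:uniform-one-hole-localization} puts the center $(1-a_rr,1-r)$ in a fixed corner layer $\mathcal N_r^R(p)$. Since $\xi_r\ge0$ and $r$ is small, this must be the corner $p=(1,1)$, not $(-1,1)$. Hence $a_r\in[1,R]$ for all small $r$; this is exactly the scale excluded by Corollary~\ref{cor:side-minimizers-approach-corners} alone.

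\emph{Step 3 (comparison in the window).} Suppose $a_r\not\to1$. Then along some sequence $r_j\to0$ we have $a_{r_j}\to a_*\in(1,R]$. The expansion of Proposition~\ref{prop:endpoint-asymptotic} is uniform on $[1,R]$, and $\mathfrak M=4\mathcal C$. Therefore
\[
\lambda_{r_j}^{\mathrm{side}}(\xi_{r_j})=\frac{\pi^2}{2}+\frac{\pi^4}{16}\mathcal C(a_{r_j})r_j^4+o(r_j^4).
\]
We next pass to the limit in $\mathcal C(a_{r_j})$. The proof of Lemma~\ref{lem:full-capacity-blowup} shows that $a\mapsto\mathfrak C(K_a,XY)$ is continuous, so $\mathcal C(a_{r_j})\to\mathcal C(a_*)$. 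By Corollary~\ref{cor:endpoint-family-corner-best}, $\mathcal C(a_*)>\mathcal C(1)$. For large $j$ this contradicts the minimality of $\xi_{r_j}$ against the competitor of Step 1. Hence $a_r\to1$, which is $(1-|\xi_r|)/r\to1$.

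The main point to check is the uniformity in Step 3: the remainder must be $o(r^4)$ uniformly in $a$ across the whole window $[1,R]$, including the tangent endpoint $a=1$. This is supplied by the compact-uniform form of Lemma~\ref{lem:full-capacity-blowup} applied to the family $K_a$. That family meets its hypotheses: the centers depend continuously on $a$, the radii equal $1$, the reflected disks touch only at isolated points, and all sets lie in one fixed ball.

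The other step needing care is Step 2, where the low cost must be turned into $a_r\le R$. Corollary~\ref{cor:side-minimizers-approach-corners} gives only $|\xi_r|\to1$, so the quantitative localization of Proposition~\ref{prop:uniform-one-hole-localization} is essential here. It applies because the upper bound of Step 1 is $O(r^4)$.
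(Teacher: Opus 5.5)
Your proposal is correct and follows essentially the paper's proof: the true-corner competitor gives an $O(r^4)$ cost, Proposition~\ref{prop:uniform-one-hole-localization} confines $a_r$ to a compact window, and the compact-uniform expansion of Proposition~\ref{prop:endpoint-asymptotic} together with strict monotonicity of $\mathcal C$ rules out any subsequential limit $a_*>1$. The differences are cosmetic. You bypass Corollary~\ref{cor:side-minimizers-approach-corners}, which is legitimate because the localization proposition already selects the corner $(1,1)$ once $\xi_r\ge0$. You also pass to the limit using continuity of $\mathcal C$, whereas the paper bounds $\mathcal C(a_r)\ge\mathcal C(a_0)$ by monotonicity.
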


\begin{proof}
By Corollary~\ref{cor:side-minimizers-approach-corners}, every minimizing sequence satisfies $|\xi_r|\to1$.  Passing to a subsequence and using symmetry, write
\[
 \xi_r=1-a_rr,
 \qquad a_r\ge1.
\]
The true-corner competitor $a=1$ and Proposition~\ref{prop:M-equals-4E} give
\[
 \lambda_r^{\mathrm{side}}(\xi_r)-\frac{\pi^2}{2}
 \le
 \lambda_1(\Omega_{r,1})-\frac{\pi^2}{2}
 =O(r^4).
\]
Proposition~\ref{prop:uniform-one-hole-localization}, applied to the center $(\xi_r,1-r)$, therefore shows that $a_r$ is bounded.  Let $a_*$ be any subsequential limit.  If $a_*>1$, choose $a_0\in(1,a_*)$ and $A>a_*$ so that $a_r\in[a_0,A]$ along a further subsequence.  Compact-uniformity in Proposition~\ref{prop:endpoint-asymptotic} and strict monotonicity from Corollary~\ref{cor:endpoint-family-corner-best} yield
\[
 \lambda_1(\Omega_{r,a_r})-\lambda_1(\Omega_{r,1})
 \ge
 \frac{\pi^4}{16}\bigl(\mathcal C(a_0)-\mathcal C(1)\bigr)r^4+o_A(r^4)>0
\]
for $r$ small, contradicting minimality.  Hence every subsequential limit of $a_r$ equals $1$, and therefore $a_r\to1$.
\end{proof}

\section{Adjacent corner pairs beat opposite corner pairs}\label{sec:polarization}

We now compare the two genuinely distinct two-corner configurations.
For $0<r<\tfrac12$, define
\[
\Omega_r^{\mathrm{opp}}
:=
Q\setminus\Bigl(
\overline{B_r(1-r,1-r)}\cup \overline{B_r(-1+r,-1+r)}
\Bigr),
\]
\[
\Omega_r^{\mathrm{adj}}
:=
Q\setminus\Bigl(
\overline{B_r(1-r,1-r)}\cup \overline{B_r(-1+r,1-r)}
\Bigr).
\]
Thus $\Omega_r^{\mathrm{opp}}$ corresponds to the opposite-corner placement, while
$\Omega_r^{\mathrm{adj}}$ corresponds to the adjacent-corner placement along the top side.
The next theorem shows that the adjacent configuration is strictly better throughout a fixed small-radius interval, not merely at leading asymptotic order.

\begin{theorem}[Small-radius adjacent-corner polarization]\label{thm:adjacent-better-than-opposite}
For every $0<r<\tfrac14$,
\[
\lambda_1\bigl(\Omega_r^{\mathrm{adj}}\bigr)
<
\lambda_1\bigl(\Omega_r^{\mathrm{opp}}\bigr).
\]
\end{theorem}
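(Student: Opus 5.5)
Polarization (two-point rearrangement) with respect to a reflection line. Let $H$ be the reflection across the vertical axis $x=0$, $\sigma(x,y)=(-x,y)$. The square $Q$ is invariant under $\sigma$. Let $K^{\rm opp}$ and $K^{\rm adj}$ denote the obstacle unions. The disk at $(1-r,1-r)$ lies in the right half-plane $\{x>0\}$ and is common to both configurations, while $\sigma$ sends the disk $B_r(-1+r,-1+r)$ to $B_r(1-r,-1+r)$. That is not what we want, so we use instead the reflection across the horizontal axis $y=0$, $\tau(x,y)=(x,-y)$. Then $\tau$ maps $B_r(-1+r,-1+r)$ to $B_r(-1+r,1-r)$ and fixes the upper half-plane disk $B_r(1-r,1-r)$ as a subset of $\{y>0\}$. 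So $K^{\rm adj}$ is exactly the polarization of $K^{\rm opp}$ toward the lower half-plane, in the set sense that obstacles are pushed "up".

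Concretely, let $u>0$ be the first eigenfunction of $\Omega_r^{\rm opp}$, extended by zero to $Q$. Define its polarization relative to $\{y>0\}$ by
\[
 Pu(x,y)=\begin{cases}\min\{u(x,y),u(x,-y)\},& y>0,\\ \max\{u(x,y),u(x,-y)\},& y<0.\end{cases}
\]
The steps are as follows.

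\textbf{Step 1.} Check that $Pu\in H_0^1(\Omega_r^{\rm adj})$, i.e. $Pu=0$ on $K^{\rm adj}$. On the upper disk at $(-1+r,1-r)$ the min vanishes because its mirror is the opposite-corner obstacle where $u=0$. On the upper disk at $(1-r,1-r)$ the min vanishes because $u=0$ there. The lower half of $K^{\rm adj}$ is empty, since $r<1/4$ ensures both disks lie in $\{y>0\}$. On $\partial Q$ the function vanishes by the $\tau$-invariance of $Q$.

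\textbf{Step 2.} Use the standard facts that polarization preserves the $L^2$ norm and the Dirichlet integral. Then the Rayleigh quotient of $Pu$ equals $\lambda_1(\Omega_r^{\rm opp})$, giving $\lambda_1(\Omega_r^{\rm adj})\le\lambda_1(\Omega_r^{\rm opp})$.

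\textbf{Step 3 (strictness, the main obstacle).} Suppose equality holds. Then $Pu$ is a first eigenfunction of $\Omega_r^{\rm adj}$, hence real analytic and strictly positive there, and it satisfies the eigen-equation. We show that it must then coincide with $u$ or with $u\circ\tau$ on $\{y>0\}$ by a unique continuation argument. Set $w(x,y)=u(x,-y)-u(x,y)$ on the upper half. Then $Pu=u+\min(w,0)$ there, and $Pu\circ\tau$ similarly. The difference $Pu-u$ on the upper half is $\min(w,0)$; both $Pu$ and $u$ solve $-\Delta v=\lambda v$ on the open set where both domains agree, so $\min(w,0)$ solves the same equation there. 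Since $w$ solves the equation too, $\max(w,0)$ does as well, and unique continuation forces $w\le0$ or $w\ge0$ on the connected upper component.

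Each sign now leads to a contradiction. If $w\ge 0$, then $Pu=u$ on the upper half. But $Pu$ vanishes on the disk at $(-1+r,1-r)$, while $u>0$ there, because that region lies in $\Omega_r^{\rm opp}$ (the disks do not overlap since $r<1/4$). If $w\le0$, then $u(x,-y)\le u(x,y)$ on the upper half, so $u(x,-y)=0$ wherever $u(x,y)=0$ in the upper half. Taking points in the disk at $(1-r,1-r)$, this forces $u=0$ on the disk at $(1-r,-1+r)$, which lies in $\Omega_r^{\rm opp}$ where $u>0$.

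The delicate part is justifying that $\min(w,0)$ is a genuine eigen-solution near the obstacle boundaries. I would handle this by working on the open region $\{y>0\}\setminus(K^{\rm opp}\cup\tau K^{\rm opp})$, where everything is smooth and positive, and by using the equality case in the Pólya–Szegő-type identity for polarization together with strong maximum principle arguments. This yields the strict inequality for every $0<r<1/4$.
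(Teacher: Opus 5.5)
Your argument is correct. Steps~1--2 are the paper's: the same horizontal polarization, with the minimum on $\{y>0\}$, fixes the upper-right disk and moves the lower-left one to $(-1+r,1-r)$.

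The two proofs differ only in how they close the equality case, and they use the same inputs. Under equality, $Pu$ is an eigenfunction of $\Omega_r^{\mathrm{adj}}$, hence analytic. Let $U_0$ be the component of $\{(x,y)\in Q:y>0\}\setminus(K^{\rm opp}\cup\tau K^{\rm opp})$ containing $(-1,1)\times(0,1-2r)$. On $U_0$, $w=u\circ\tau-u$ takes both signs: it is negative near the lower arc of the disk at $(-1+r,1-r)$ and positive near the lower arc of the disk at $(1-r,1-r)$. The paper uses the structure of nodal sets to find a regular zero $z_0$ of $w$ in $U_0$. It then observes that $\min\{u,u\circ\tau\}$ has the gradient jump $\nabla w(z_0)\neq0$ across the nodal arc, which contradicts interior $C^1$ regularity of the eigenfunction. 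You instead note that $\min(w,0)=Pu-u$ itself solves $-\Delta f=\lambda f$ on the connected set $U_0$ and vanishes on the nonempty open set $\{w>0\}$. Unique continuation then forces $w\ge0$ on $U_0$, contradicting $w<0$ near the upper-left arc. Your version is shorter and needs only unique continuation, not the existence of a regular nodal point; the paper's gives a concrete local regularity obstruction.

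A few statements need tightening, though none affects the idea.

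\begin{itemize}
\item \emph{Where the dichotomy holds.} You only obtain $w\ge0$ or $w\le0$ on $U_0$, so ``$Pu=u$ on the upper half'' and the evaluations ``on the disk'' are not justified as written; the disks lie outside $U_0$. Draw the contradictions instead by approaching the lower inward arcs of the two upper disks from inside $U_0$. This uses that $u$ is continuous, with zero boundary values, up to the smooth parts of the obstacle boundaries, and that these neighborhoods connect vertically to the strip, so they lie in $U_0$.
\item \emph{The ``delicate part''.} It is not delicate. The equality assumption directly makes $Pu$ and $u$ analytic solutions on $U_0$. No equality case of a P\'olya--Szeg\H{o} identity and no maximum principle is needed, and nothing near the obstacle boundaries enters.
\item \emph{Corner pockets.} Each corner-tangent disk cuts off a pocket contained in a square of side $r$, whose first eigenvalue is at least $2\pi^2/r^2$. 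The large component contains $(-1/2,1/2)^2$, so its first eigenvalue is at most $2\pi^2$. This is what makes $u$ the positive eigenfunction of the large component extended by zero. It also shows that $Pu$ is positive only on the large component of $\Omega_r^{\mathrm{adj}}$, not in its pockets.
\end{itemize}
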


\begin{proof}
Let
\[
\sigma(x,y):=(x,-y),
\qquad
H_-:=\{y<0\},
\qquad
H_+:=\{y>0\}.
\]
We use polarization with respect to the horizontal axis, placing the larger value on the lower half-plane.  If $u\ge0$ is measurable on $Q$, extended by zero outside its support, define
\[
(Pu)(z):=
\begin{cases}
\max\{u(z),u(\sigma z)\},&z\in H_-,\\[1mm]
\min\{u(z),u(\sigma z)\},&z\in H_+,\\[1mm]
u(z),&y=0.
\end{cases}
\]
Standard properties of polarization imply that, for $u\in H_0^1(\Omega_r^{\mathrm{opp}})$ extended by zero to $Q$, one has $Pu\in H_0^1(Q)$ and
\[
\int_Q|Pu|^2=\int_Q|u|^2,
\qquad
\int_Q|\nabla Pu|^2=\int_Q|\nabla u|^2;
\]
see Brock--Solynin~\cite{BrockSolynin2000}.  Inspection of the reflected holes shows that the lower-left hole is moved to the upper-left corner and the upper-right hole is fixed.  Hence $Pu$ vanishes quasi-everywhere on the holes of $\Omega_r^{\mathrm{adj}}$, so $Pu\in H_0^1(\Omega_r^{\mathrm{adj}})$.

A corner-tangent disk cuts off a small corner pocket.  This does not affect the first eigenfunction used below.  Each such pocket is contained in a square of side $r$, and hence its first Dirichlet eigenvalue is at least $2\pi^2/r^2$.  For $r<1/4$, the large component of either configuration contains $(-1/2,1/2)^2$, so its first eigenvalue is at most $2\pi^2$.  Thus the first eigenvalue of the full punctured square is the simple first eigenvalue of its large component.  Let $u_{\mathrm{opp}}$ be the positive $L^2$-normalized first eigenfunction on that component, extended by zero to the corner pockets and then to $Q$.

Set $v:=Pu_{\mathrm{opp}}$.  Then $v\in H_0^1(\Omega_r^{\mathrm{adj}})$ and
\[
\frac{\int_{\Omega_r^{\mathrm{adj}}}|\nabla v|^2}
     {\int_{\Omega_r^{\mathrm{adj}}}|v|^2}
=
\lambda_1\bigl(\Omega_r^{\mathrm{opp}}\bigr).
\]
The Rayleigh principle gives
\[
\lambda_1\bigl(\Omega_r^{\mathrm{adj}}\bigr)
\le
\lambda_1\bigl(\Omega_r^{\mathrm{opp}}\bigr).
\]

Assume equality.  Then $v$ is the first eigenfunction on the large component of $\Omega_r^{\mathrm{adj}}$ and is real-analytic there.  Consider
\[
U:=\{(x,y)\in H_+:(x,y),(x,-y)\in\Omega_r^{\mathrm{opp}}\}.
\]
The set $U$ may contain tiny components lying behind corner-tangent disks.  Let $U_0$ be the component containing the strip
\[
(-1,1)\times(0,1-2r).
\]
On $U_0$ define
\[
D(x,y):=u_{\mathrm{opp}}(x,y)-u_{\mathrm{opp}}(x,-y).
\]
Both branches satisfy $-\Delta u=\lambda_1(\Omega_r^{\mathrm{opp}})u$, so $D$ is real-analytic on $U_0$.

The function $D$ takes both signs in $U_0$.  Approach the lower, inward-facing arc of the upper-right hole from below.  The first term tends to zero, whereas the reflected point lies in the large component near the bottom-right corner and the second term stays positive; hence $D<0$ nearby.  Approach from below the lower arc of the reflected lower-left hole in the upper-left corner.  The first point lies in the original large component, while its reflection approaches the lower-left hole; hence $D>0$ nearby.  Both neighborhoods connect vertically to the displayed strip and therefore lie in $U_0$.

Consequently the nodal set $\{D=0\}$ meets the interior of $U_0$.  The planar nodal-set theorem for solutions of $-\Delta D=\lambda D$ supplies a regular zero; see~\cite{Cheng1976}.  Equivalently, the first nonzero homogeneous harmonic term in the local expansion either has degree one, or, if its degree is at least two, its nodal rays contain regular zeros arbitrarily close.  Hence there is $z_0\in U_0$ such that
\[
D(z_0)=0,
\qquad
\nabla D(z_0)\ne0.
\]
On $U_0\subset H_+$,
\[
v(x,y)=\min\{u_{\mathrm{opp}}(x,y),u_{\mathrm{opp}}(x,-y)\}.
\]
Across the regular nodal arc through $z_0$, the minimum switches between two analytic branches whose gradients differ by $\nabla D(z_0)$.  Thus $v$ is not $C^1$ at the interior point $z_0$, contradicting elliptic interior regularity.  Equality is impossible.
\end{proof}

\begin{corollary}[Opposite corner layers polarize to adjacent layers]\label{cor:polarized-corner-layers}
Fix $R>1$.  For all sufficiently small $r$, let
\[
x_+\in\mathcal N_r^R((1,1)),
\qquad
x_-\in\mathcal N_r^R((-1,-1)),
\qquad
x_-^*:=(x_{-,1},-x_{-,2})\in\mathcal N_r^R((-1,1)).
\]
Then
\[
\lambda_1\Bigl(Q\setminus(\overline{B_r(x_+)}\cup\overline{B_r(x_-^*)})\Bigr)
<
\lambda_1\Bigl(Q\setminus(\overline{B_r(x_+)}\cup\overline{B_r(x_-)})\Bigr).
\]
The analogous statement holds for every diagonally opposite pair by symmetry.
\end{corollary}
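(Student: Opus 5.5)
The plan is to repeat the polarization proof of Theorem~\ref{thm:adjacent-better-than-opposite} with the two corner-tangent disks replaced by general disks centered at $x_+$ and $x_-$. Write
\[
 \Omega^{\rm opp}:=Q\setminus(\overline{B_r(x_+)}\cup\overline{B_r(x_-)}),
 \qquad
 \Omega^{\rm adj}:=Q\setminus(\overline{B_r(x_+)}\cup\overline{B_r(x_-^*)}).
\]
We use the same reflection $\sigma(x,y)=(x,-y)$ and the same polarization $P$, which places the larger value on the lower half-plane $H_-$.

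\emph{Geometric step.} For $r$ small (say $(R+1)r<1/2$), the disk $\overline{B_r(x_+)}$ lies in $H_+$ and $\overline{B_r(x_-)}$ lies in $H_-$. Their reflections lie in the opposite half-planes, near the corners $(1,-1)$ and $(-1,1)$. Since the first coordinates of $x_+$ and $x_-$ have opposite signs, $\sigma B_r(x_+)$ and $B_r(x_-)$ are disjoint, and so are $B_r(x_+)$ and $\sigma B_r(x_-)=B_r(x_-^*)$.

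For nonnegative $u\in H_0^1(\Omega^{\rm opp})$ extended by zero, we check $Pu=0$ on the new hole set pointwise.
\begin{itemize}[label=\textbullet]
\item On $B_r(x_+)\subset H_+$ we have $Pu=\min\{u,u\circ\sigma\}=0$, because $u$ vanishes there.
\item On $B_r(x_-^*)\subset H_+$ we have $Pu=\min\{u,u\circ\sigma\}=0$, because $u\circ\sigma$ vanishes there.
\end{itemize}
Quasi-everywhere statements follow from standard properties of polarization. Brock--Solynin~\cite{BrockSolynin2000} then gives $Pu\in H_0^1(\Omega^{\rm adj})$ with the same $L^2$ norm and Dirichlet energy. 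This yields $\lambda_1(\Omega^{\rm adj})\le\lambda_1(\Omega^{\rm opp})$ by the Rayleigh principle, applied to the positive first eigenfunction $u_{\rm opp}$.

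\emph{Connectivity step.} A disk in the corner layer may cut off a pocket: this happens when it is tangent to both sides, or is very close to a corner. Any such pocket lies in a square of side $O(Rr)$, so its eigenvalue is of order $r^{-2}$. Meanwhile the large component contains $(-1/2,1/2)^2$, so its eigenvalue is at most $2\pi^2$. Hence for $r$ small the first eigenvalue is simple and carried by the large component, and $u_{\rm opp}$ is positive there and zero in the pockets, exactly as in the theorem.

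\emph{Strictness step.} Assume equality. Then $v=Pu_{\rm opp}$ is a first eigenfunction of $\Omega^{\rm adj}$, hence real-analytic in the large component. Define $U_0$ as the component of $\{z\in H_+: z,\sigma z\in\Omega^{\rm opp}\}$ containing the strip $(-1,1)\times(0,1-(R+1)r)$, and put $D=u_{\rm opp}-u_{\rm opp}\circ\sigma$ on $U_0$. We then show that $D$ changes sign.
\begin{itemize}[label=\textbullet]
\item Approaching the lower arc of $\overline{B_r(x_+)}$ from below, the first term tends to zero while the reflected point stays in the large component, so $D<0$.
\item Approaching the lower arc of $\overline{B_r(x_-^*)}$ from below, the reflected point tends to $\partial B_r(x_-)$ while the first term stays positive, so $D>0$.
\end{itemize}
Both regions join the strip vertically, because the lowest point of each disk faces the strip. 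A regular zero of $D$, obtained as in~\cite{Cheng1976}, then produces a gradient jump in $v=\min\{u_{\rm opp},u_{\rm opp}\circ\sigma\}$ on $U_0$. This contradicts the smoothness of $v$, so equality fails.

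The step I expect to need the most care is verifying that the points near the lower arcs lie in $U_0$ and in the large components. Their reflections must avoid the other disk and must not fall into a pocket. This holds because the reflected partner disk sits in a different corner, at distance of order $1$. Everything else transfers verbatim from Theorem~\ref{thm:adjacent-better-than-opposite}. Symmetry of the square then gives the statement for the other diagonal pair.
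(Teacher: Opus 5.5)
Your proposal is correct and follows the paper's proof essentially step for step: the same horizontal polarization (fixing the upper-right disk and reflecting the lower-left one to $x_-^*$), the same separation of pocket eigenvalues $\gtrsim r^{-2}$ from the bounded eigenvalue of the large component, and the same strictness argument, in which $D$ changes sign on the component $U_0$ containing $(-1,1)\times(0,1-(R+1)r)$ and a regular nodal point produces a gradient jump in $v$. One harmless imprecision: a pocket is cut off only when a disk is tangent to both sides ($a=b=1$), not merely when it lies very close to the corner; otherwise the domain stays connected, which only makes the simplicity step easier.
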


\begin{proof}
Write
\[
x_+=(1-a_+r,1-b_+r),
\qquad
x_-=(-1+a_-r,-1+b_-r),
\qquad
(a_\pm,b_\pm)\in[1,R]^2.
\]
For $r$ sufficiently small, the relevant corner boxes are disjoint.  Every component cut off behind a disk is contained in a square of side $(R+2)r$, so its first eigenvalue is bounded below by $c_Rr^{-2}$.  The component containing a fixed central square has uniformly bounded first eigenvalue.  Thus, uniformly in the parameters, the first eigenfunction is the simple positive eigenfunction of the large component, extended by zero to any corner pockets.

Apply the same horizontal polarization as in Theorem~\ref{thm:adjacent-better-than-opposite}.  It fixes the upper-right disk and reflects the lower-left disk to the upper-left center $x_-^*$.  Preservation of the Dirichlet energy and $L^2$ norm gives the non-strict inequality.

For strictness, let $u$ be the first eigenfunction of the opposite-layer configuration and set
\[
U:=\{(x,y)\in H_+:(x,y),(x,-y)\text{ belong to the opposite-layer domain}\}.
\]
Let $U_0$ be the component containing
\[
(-1,1)\times\bigl(0,1-(R+1)r\bigr).
\]
The analytic difference $D(x,y)=u(x,y)-u(x,-y)$ is negative near the lower inward arc of the upper-right disk and positive near the lower inward arc of the reflected lower-left disk.  These two neighborhoods connect vertically to the displayed strip, so both lie in $U_0$.  The nodal-set argument of Theorem~\ref{thm:adjacent-better-than-opposite} therefore gives a regular crossing at which the polarized minimum is not $C^1$.  Equality is impossible.
\end{proof}

\section{Same-corner branch: full-capacity comparison and direct exclusion}\label{sec:same-corner}

We next treat configurations in which both holes lie in the same $O(r)$ corner layer. The corresponding quadrant cell is defined as follows.
\[
\Gamma:=\{(s,t)\in\mathbb R^2:s>0,\ t>0\},
\qquad
c_i=(a_i,b_i),\qquad a_i,b_i\ge 1,
\qquad
|c_1-c_2|\ge 2,
\]
\[
H_i:=\overline{B_1(c_i)}\quad (i=1,2),
\qquad
P(c_1,c_2):=\Gamma\setminus (H_1\cup H_2).
\]
Let $W_{c_1,c_2}$ be the unique harmonic function in $P(c_1,c_2)$ such that
\[
W_{c_1,c_2}=0 \text{ on }\partial\Gamma,
\qquad
W_{c_1,c_2}=st \text{ on }\partial H_1\cup\partial H_2,
\qquad
\int_{P(c_1,c_2)}|\nabla W_{c_1,c_2}|^2<\infty,
\]
and define the exterior corrector energy
\[
\mathcal E_2(c_1,c_2):=
\int_{P(c_1,c_2)}|\nabla W_{c_1,c_2}|^2\,ds\,dt.
\]
The corresponding full two-hole cell capacity is
\[
\mathcal C_2(c_1,c_2):=
\mathcal E_2(c_1,c_2)
+\sum_{i=1}^2\int_{H_i}|\nabla(st)|^2
=
\mathcal E_2(c_1,c_2)
+\pi\sum_{i=1}^2\Bigl(a_i^2+b_i^2+\frac12\Bigr).
\]
The quantity entering the eigenvalue expansion is $\mathcal C_2$, not $\mathcal E_2$ alone.  The goal in this section is to prove
\[
\mathcal C_2(c_1,c_2)>2\mathcal C(1,1)
\]
for every admissible distinct pair.  The proof combines coordinatewise monotonicity of the full one-hole capacity with a direct two-hole slice estimate and a short exact-arithmetic certificate.  For related analyses of small holes near flat boundaries and conical corners, see~\cite{BDDM2016-2D,CDDM2017,CDDM2025}.

\subsection{The two-parameter one-hole cell}

For $a,b\ge1$, set
\[
 H_{a,b}:=\overline{B_1((a,b))},\qquad
 P_{a,b}:=\Gamma\setminus H_{a,b}.
\]
Let $W_{a,b}$ be harmonic in $P_{a,b}$, vanish on the axes, equal $st$ on
$\partial H_{a,b}$, and have finite Dirichlet energy.  Define
\[
 \mathcal F(a,b):=\int_{P_{a,b}}|\nabla W_{a,b}|^2,
 \qquad
 \mathcal C(a,b):=\mathcal F(a,b)
 +\pi\Bigl(a^2+b^2+\frac12\Bigr).
\]
Thus $\mathcal C(a,1)=\mathcal C(a)$ and
$\mathcal C(a,b)=\mathcal C(b,a)$.  Write
\[
 W_{a,b}(r,\theta)=c(a,b)r^{-2}\sin(2\theta)+O(r^{-4}).
\]
The same Green identity as in~\eqref{eq:E-from-c}, now centered at
$(a,b)$, gives
\begin{equation}\label{eq:C-from-c-general}
 \mathcal F(a,b)=\frac{\pi}{2}c(a,b)
 -\pi\Bigl(a^2+b^2+\frac12\Bigr),
 \qquad
 \mathcal C(a,b)=\frac{\pi}{2}c(a,b).
\end{equation}
Only the polynomial $st$, which vanishes on both axes, is used in this
identity.

Let $Y_{a,b}$ be the finite-energy harmonic function in $P_{a,b}$ that
vanishes on the axes and equals $t$ on $\partial H_{a,b}$, and write
\[
 Y_{a,b}(r,\theta)=d_y(a,b)r^{-2}\sin(2\theta)+O(r^{-4}).
\]
Similarly define $X_{a,b}$ with boundary datum $s$ and coefficient
$d_x(a,b)$.  The maximum principle and Lemma~\ref{lem:quadrant-far-field} give
\begin{equation}\label{eq:dxy-positive}
 d_y(a,b)>0,\qquad d_x(a,b)>0.
\end{equation}

\begin{proposition}[Coordinatewise monotonicity of the full one-hole capacity]
\label{prop:C-coordinatewise-monotone}
The function $\mathcal C(a,b)$ is strictly increasing in each coordinate
on $[1,\infty)^2$.
\end{proposition}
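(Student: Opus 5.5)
The plan is to imitate the shifted comparison of Proposition~\ref{prop:shifted-comparison}, now in two parameters, and to use the identity $\mathcal C(a,b)=\frac{\pi}{2}c(a,b)$ from~\eqref{eq:C-from-c-general}. This reduces strict monotonicity of $\mathcal C$ to strict monotonicity of the far-field coefficient $c(a,b)$. By the symmetry $\mathcal C(a,b)=\mathcal C(b,a)$ (reflection $s\leftrightarrow t$ swaps $W_{a,b}$ and $W_{b,a}$ and preserves $\sin 2\theta$), it suffices to prove monotonicity in the first coordinate. So fix $b\ge1$, $a\ge1$, $\delta>0$, and compare $W_{a+\delta,b}$ with $W_{a,b}$.

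Define on $P_{a,b}$
\[
 F(s,t):=W_{a+\delta,b}(s+\delta,t)-W_{a,b}(s,t)-\delta\,Y_{a,b}(s,t).
\]
The key identity is that on $\partial H_{a,b}$ the shifted point $(s+\delta,t)$ lies on $\partial H_{a+\delta,b}$. There $W_{a+\delta,b}(s+\delta,t)=(s+\delta)t=st+\delta t$. Hence $F=0$ on $\partial H_{a,b}$, since $Y_{a,b}=t$ there. On $\{t=0\}$ all three terms vanish. On $\{s=0\}$ we get $F(0,t)=W_{a+\delta,b}(\delta,t)\ge0$, by the maximum principle: $W_{a+\delta,b}\ge0$ because its boundary data $st\ge0$ on the hole and $0$ on the axes, and it decays at infinity.

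All three functions satisfy the quadrant far-field expansion of Lemma~\ref{lem:quadrant-far-field}. The shift by $\delta$ changes $r^{-2}\sin2\theta$ only by $O(r^{-3})$, uniformly up to the axes. Hence $F=O(R^{-2})$ on $\partial B_R\cap\Gamma$. The maximum principle on $P_{a,b}\cap B_R$ then gives $F\ge -C R^{-2}$, and letting $R\to\infty$ gives $F\ge0$.

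Reading off the $r^{-2}\sin2\theta$ coefficients along a fixed ray (the shift contributes only lower order) yields
\[
 c(a+\delta,b)\ge c(a,b)+\delta\, d_y(a,b)>c(a,b),
\]
with strictness from~\eqref{eq:dxy-positive}. Then~\eqref{eq:C-from-c-general} gives $\mathcal C(a+\delta,b)>\mathcal C(a,b)$. The $b$-direction follows from the symmetric argument with $X_{a,b}$ and $d_x$, or directly from symmetry.

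The main technical point is justifying the far-field bound for the shifted function $W_{a+\delta,b}(\cdot+\delta,\cdot)$ uniformly near the $t$-axis. The shifted function is harmonic in a region containing the quadrant far away, and it vanishes on $t=0$ but not on $s=0$. I would handle this by applying Lemma~\ref{lem:quadrant-far-field} to $W_{a+\delta,b}$ in its own quadrant and Taylor expanding the explicit series $\sum\alpha_m r^{-2m}\sin(2m\theta)$ under translation. Its gradient is $O(r^{-3})$ uniformly, which controls the difference by $O(\delta r^{-3})$. This matches exactly the estimate already used in Proposition~\ref{prop:shifted-comparison}.
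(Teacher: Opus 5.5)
Your proposal is correct and is essentially the paper's own proof: the same comparison function $W_{a+\delta,b}(s+\delta,t)-W_{a,b}(s,t)-\delta Y_{a,b}(s,t)$, the truncated maximum principle with an $O(R^{-2})$ outer bound, the comparison of the $r^{-2}\sin 2\theta$ coefficients giving $c(a+\delta,b)\ge c(a,b)+\delta d_y(a,b)>c(a,b)$, the identity $\mathcal C=\frac{\pi}{2}c$, and the $s\leftrightarrow t$ symmetry for the second coordinate. One caveat, which your proof shares with the paper, concerns the endpoint $a=1$: there the disk touches the $t$-axis at $(0,b)$, where the datum $t=b$ meets the axis value $0$, so a horizontal Cauchy--Schwarz estimate in the cusp shows that no finite-energy $Y_{1,b}$ exists. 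You should therefore either take the bounded Perron solution there (a single boundary point is polar and the energy is finite near infinity, so the maximum principle, Lemma~\ref{lem:quadrant-far-field}, and $d_y(1,b)>0$ all survive), or prove strictness for $a>1$ and reach $a=1$ by continuity of $\mathcal C$.
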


\begin{proof}
For $\delta>0$, the shifted function
$W_{a+\delta,b}(s+\delta,t)$ is defined on $P_{a,b}$.  On the circular
boundary it equals $W_{a,b}+\delta Y_{a,b}$; on $t=0$ all three functions
vanish, and on $s=0$ the shifted function is nonnegative.  The difference between the left-hand side and the right-hand side is nonnegative on the physical boundary and is $O(R^{-2})$ on $\partial B_R\cap\Gamma$.  Applying the truncated-domain argument from Proposition~\ref{prop:shifted-comparison} and then sending $R\to\infty$ gives
\[
 W_{a+\delta,b}(s+\delta,t)
 \ge W_{a,b}(s,t)+\delta Y_{a,b}(s,t).
\]
Comparison of the first far-field coefficients yields
\[
 c(a+\delta,b)\ge c(a,b)+\delta d_y(a,b)>c(a,b).
\]
Equation~\eqref{eq:C-from-c-general} proves strict increase in $a$.
Interchanging $s$ and $t$ proves strict increase in $b$.
\end{proof}

For later use, define
\[
 K_{a,b}:=\bigcup_{\sigma_1,\sigma_2\in\{\pm1\}}
 \overline{B_1((\sigma_1a,\sigma_2b))},
 \qquad
 K(c_1,c_2):=K_{a_1,b_1}\cup K_{a_2,b_2}.
\]
Odd symmetry in both coordinates gives
\begin{equation}\label{eq:reflection-capacity-identities}
 \mathfrak C(K_{a,b},XY)=4\mathcal C(a,b),
 \qquad
 \mathfrak C(K(c_1,c_2),XY)=4\mathcal C_2(c_1,c_2).
\end{equation}
These identities concern the full capacity; no zero extension of an
exterior corrector is involved.

\subsection{Direct exclusion of same-corner clusters}

We now prove the quantitative inequality needed below without using shifted
Green identities.  Put
\[
 u:=|a_1-a_2|,\qquad v:=|b_1-b_2|.
\]
After interchanging the coordinate axes if necessary, assume $u\ge v$.
Then $u\ge\sqrt2$ and $u^2+v^2\ge4$.  Since $a_i,b_i\ge1$, the polynomial
energy inside the two disks satisfies
\begin{equation}\label{eq:interior-two-hole-lower}
 \sum_{i=1}^2\int_{H_i}|\nabla(st)|^2
 \ge \pi\bigl(5+2(u+v)+u^2+v^2\bigr).
\end{equation}

Relabel the disks so that $a_1\le a_2$.  For
$\sqrt2\le u\le2$, the intervals
$[a_1-1,a_2-1]$ and $[a_1+1,a_2+1]$ belong exclusively to the horizontal
projections of the first and second disk, respectively.  On a fixed
vertical line in either interval, Cauchy--Schwarz between the axis and the
lower circular boundary gives the following quantity after translating the
abscissa to the corresponding center and using $a_i,b_i\ge1$.  Define
\begin{align}
 V(u):={}&\int_{-1}^{u-1}(1+x)^2
          \bigl(1-\sqrt{1-x^2}\bigr)\,dx \notag\\
 &+\int_{1-u}^{1}(1+u+x)^2
          \bigl(1-\sqrt{1-x^2}\bigr)\,dx.\label{eq:V-exclusive}
\end{align}
For $u\ge2$, use the same two integrands over the full interval $[-1,1]$.
Vertical one-dimensional Cauchy--Schwarz estimates on the portions of the
horizontal projections belonging exclusively to each disk give
\begin{equation}\label{eq:E2-ge-V}
 \mathcal E_2(c_1,c_2)\ge V(u).
\end{equation}
The function $V$ is increasing: as $u$ increases, both exclusive intervals
expand, and on their common part the second weight $(1+u+x)^2$ increases.

When $0\le v<2$, order the two vertical center coordinates as
$b_-\le b_+$ and write
$x=t-(b_-+b_+)/2$.  Their vertical projections overlap for
$-1+v/2\le x\le1-v/2$.  On every such horizontal slice, the interval
between the two circular sections lies in the exterior domain.  Since
$t\ge1+v/2+x$, a horizontal Cauchy--Schwarz estimate gives a second
contribution
\begin{align}
 G(u,v):={}&\int_{-1+v/2}^{1-v/2}(1+v/2+x)^2
 \Bigl[u-\sqrt{1-(x+v/2)^2} \notag\\
 &\hspace{4.8cm}-\sqrt{1-(x-v/2)^2}\Bigr] \,dx.\label{eq:G-gap}
\end{align}
The bracket is nonnegative because the disks have disjoint interiors.
The estimates for $V$ use the vertical derivative and those for $G$ use
the horizontal derivative, so they may be added even where the underlying
regions meet.  Consequently,
\begin{equation}\label{eq:C2-master-lower}
 \mathcal C_2(c_1,c_2)
 \ge \pi\bigl(5+2(u+v)+u^2+v^2\bigr)+V(u)+G(u,v),
\end{equation}
with $G=0$ when $v\ge2$.

To turn~\eqref{eq:C2-master-lower} into a transparent certified bound,
let
\begin{equation}\label{eq:q10}
 q_{10}(x):=\frac{x^2}{2}+\frac{x^4}{8}+\frac{x^6}{16}
 +\frac{5x^8}{128}+\frac{7x^{10}}{256}
 +\frac{21x^{12}}{1024}+\frac{33x^{14}}{2048}
 +\frac{429x^{16}}{32768}
 +\frac{715x^{18}}{65536}
 +\frac{2431x^{20}}{262144}.
\end{equation}
The binomial series has positive coefficients, hence
\begin{equation}\label{eq:q10-lower}
 0\le q_{10}(x)\le1-\sqrt{1-x^2}
 \qquad (|x|\le1).
\end{equation}
Let $V_{10}$ be obtained from~\eqref{eq:V-exclusive} by replacing
$1-\sqrt{1-x^2}$ by $q_{10}(x)$.  Thus $V\ge V_{10}$.

We also retain a quantitative part of the gap term.  Since the function
$z\mapsto\sqrt{4-z}$ is concave, its chord on
$0\le z\le81/100$ gives
\begin{equation}\label{eq:u-chord}
 \sqrt{4-v^2}\ge
 \underline u(v):=2+\frac{10(\sqrt{319}-20)}{81}v^2,
 \qquad 0\le v\le\frac9{10}.
\end{equation}
Combining~\eqref{eq:q10-lower} with
$u\ge\sqrt{4-v^2}$ in~\eqref{eq:G-gap}, define
\begin{align}
 \underline G_{10}(v):={}&
 \int_{-1+v/2}^{1-v/2}(1+v/2+x)^2
 \Bigl[\underline u(v)-2+q_{10}(x+v/2)
             +q_{10}(x-v/2)\Bigr] \,dx.
 \label{eq:G10-lower}
\end{align}
Then
\begin{equation}\label{eq:G-ge-G10}
 G(u,v)\ge \underline G_{10}(v),
 \qquad 0\le v\le\frac9{10}.
\end{equation}
The right-hand side is an explicit degree-$23$ polynomial with coefficients
in $\mathbb Q(\sqrt{319})$.  Exact rational interval evaluation gives
\begin{equation}\label{eq:G10-piecewise}
\begin{array}{c|c}
 v\text{-range} & \underline G_{10}(v)\text{ is larger than}\ \\ \hline
 $[0,1/10]$ & 117/100\\
 $[1/10,1/5]$ & 51/50\\
 $[1/5,2/5]$ & 79/100\\
 $[2/5,9/10]$ & 2/5.
\end{array}
\end{equation}
The accompanying certificate checks these bounds by interval Horner
evaluation on rational subdivisions; no monotonicity assumption is used.

\begin{proposition}[Certified same-corner capacity gap]
\label{prop:same-corner-certified-gap}
For every admissible pair $(c_1,c_2)$,
\begin{equation}\label{eq:C2-certified-gap}
 \mathcal C_2(c_1,c_2)>\frac{1169}{25}=46.76.
\end{equation}
On the other hand,
\begin{equation}\label{eq:two-C11-certified-upper}
 2\mathcal C(1,1)<\frac{931}{20}\;(=46.55).
\end{equation}
In particular,
\[
 \mathcal C_2(c_1,c_2)>2\mathcal C(1,1).
\]
\end{proposition}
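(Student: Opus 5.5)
The proof has two independent halves: a lower bound for $\mathcal C_2$ over all admissible pairs, and a numerical upper bound for $\mathcal C(1,1)$.

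\emph{Lower bound for $\mathcal C_2$.} I would start from the master inequality~\eqref{eq:C2-master-lower}, normalized so that $u\ge v$, hence $u\ge\sqrt2$ and $u^2+v^2\ge4$. Write $I(u,v):=\pi(5+2(u+v)+u^2+v^2)$. I would then split according to $v$.

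\begin{enumerate}[label=\textnormal{(\roman*)}]
\item \emph{Case $v\ge 9/10$.} Here $u\ge v$, and I would drop $G$ entirely. Use $u^2+v^2\ge4$ and $u+v\ge\sqrt{2(u^2+v^2)}$ only where needed. With $u\ge 9/10$, $v\ge 9/10$ and $u^2+v^2\ge4$, the minimum of $2(u+v)$ on this constraint set is attained at a boundary point, with $u+v\ge 9/10+\sqrt{4-81/100}\approx2.686$. This gives $I\ge\pi(9+5.37)\approx45.1$. Adding $V(u)\ge V_{10}(\sqrt2)$, using the monotonicity of $V$ and $u\ge\sqrt2$, should push the total above $46.76$. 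If it does not, I would refine by also using $u\ge\sqrt{4-v^2}$ inside $V_{10}$, on a few $v$-subintervals.
\item \emph{Case $0\le v\le 9/10$.} Here $u\ge\sqrt{4-v^2}\ge\underline u(v)$, so $I\ge\pi(5+2(\underline u(v)+v)+4)$ and $V(u)\ge V_{10}(\underline u(v))$. I would add the piecewise lower bounds~\eqref{eq:G10-piecewise} for $\underline G_{10}(v)$. On each of the four $v$-ranges, I would take the worst endpoint of the monotone pieces ($I$ and $V_{10}$ increase in $v$ through $\underline u$), add the constant from the table, and check exceedance of $1169/25$ by exact rational arithmetic with rational enclosures of $\pi$ and $\sqrt{319}$.
\end{enumerate}

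The expected tightest spot is $v=0$, $u=2$: there $I=13\pi\approx40.84$, $V_{10}(2)$ must contribute roughly $4.8$, and $G\ge1.17$. This is exactly why the table's first entry is so large, so this is where the margin is thinnest.

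\emph{Upper bound for $2\mathcal C(1,1)$.} This half needs an upper bound on $\mathcal F(1,1)$, since $\mathcal C(1,1)=\mathcal F(1,1)+5\pi/2$. By the Dirichlet principle, any admissible competitor gives an upper bound: a finite-energy function on $P_{1,1}$ that vanishes on the axes and equals $st$ on $\partial H_{1,1}$. I would take the explicit cutoff $\Phi=st\,\eta(\rho)$, where $\rho=|(s,t)-(1,1)|$ and $\eta$ is a radial cutoff equal to $1$ at $\rho=1$ and decaying to $0$, possibly logarithmically or as $\rho^{-2}$-type decay. Better still, I would take a combination of the Kelvin-type harmonic functions $\mathrm{Im}\bigl((z-c)^{-k}\bigr)$, made odd by the image charges at $\pm\bar c,-c$, with coefficients fitted on the circle; the boundary mismatch is then corrected by a small cutoff.

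The energy of such a competitor is an explicit integral that can be bounded rigorously. The target is $\mathcal F(1,1)<931/40-5\pi/2\approx15.42$. I expect this upper-bound step to be the main obstacle: the margin is only $46.76-46.55$, so a crude cutoff will fail. I would first try a multipole fit with two or three image-odd terms plus a thin annular correction, and bound its energy by interval quadrature.

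Combining the two halves gives $\mathcal C_2(c_1,c_2)>46.76>46.55>2\mathcal C(1,1)$.
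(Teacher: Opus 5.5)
\textbf{Lower bound.} Your lower-bound half is essentially the paper's argument. It uses the same master inequality~\eqref{eq:C2-master-lower}, the four $v$-ranges of~\eqref{eq:G10-piecewise} for $v\le 9/10$, and discards $G$ for $v\ge 9/10$. Your case (i) is in effect the fifth row of Table~\ref{tab:same-corner-certificate} applied to all $v\ge 9/10$, which is legitimate and gives about $47.4$. Three slips need fixing.

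\emph{A false inequality.} $u+v\ge\sqrt{2(u^2+v^2)}$ is the reverse of Cauchy--Schwarz and fails unless $u=v$. Your explicit minimization does not use it, so delete it.

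\emph{The monotonicity direction in case (ii).} The chord coefficient $10(\sqrt{319}-20)/81$ is negative, so $\underline u$ \emph{decreases} in $v$. On each range $[v_0,v_1]$, the bound $\pi(9+2(\sqrt{4-v^2}+v))$ is smallest at $v_0$, but $V_{10}(\sqrt{4-v^2})$ is smallest at $v_1$. That is why the paper's first row reads $13\pi+V_{10}(\sqrt{399}/10)+117/100>46.7626$. Your parenthetical says both terms increase, so you would take both at the left endpoint. That uses $V_{10}(2)\approx 4.775$ instead of $V_{10}(\sqrt{399}/10)\approx 4.752$. This is not a valid lower bound on $(0,1/10]$, and the overstatement of about $0.02$ is several times the true margin of about $0.0026$ in that row.

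\emph{The chord.} Do not route the polynomial term and $V_{10}$ through $\underline u$. The chord is needed only to make $\underline G_{10}$ a polynomial; elsewhere the exact endpoint values $\sqrt{4-v_1^2}$ are sharper. By a rough estimate, the chord costs about half of that same thin margin.

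\textbf{Upper bound.} The genuine gap is the upper bound $2\mathcal C(1,1)<931/20$, which is the real quantitative content of the proposition. You name the right principle, Dirichlet's principle with an explicit competitor, but you exhibit no competitor, so this half is not proved.

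One of your suggestions is not even admissible. If $\eta\sim\rho^{-2}$, then $st\,\eta(\rho)$ tends to the degree-zero profile $\tfrac12\sin 2\theta$, whose Dirichlet integral diverges logarithmically. You need $\eta=O(\rho^{-2-\varepsilon})$.

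The paper shows that no multipole fitting is needed: your first ansatz works with a tuned radial profile. It takes $\Phi=st\,f(\rho)$ with $f(\rho)=\tfrac{16}{5}\rho^{-3}-\tfrac{31}{10}\rho^{-4}+\tfrac{9}{10}\rho^{-5}$, so that $f(1)=1$. The energy off the corner pocket $D=\{(s,t)\in(0,1)^2:\rho\ge 1\}$ is evaluated in closed form by polar integration. The pocket is controlled by Minkowski's inequality, using $0<f\le 1$ and $|f'|\le 17/10$ on $[1,\sqrt2]$, both checked via discriminants of quadratics. This gives an upper bound of about $15.416$ for $\mathcal F(1,1)$, against your correctly computed target of about $15.421$.

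So your worry about the margin was well founded. But the proof needs exactly such an explicit profile with a rigorous energy bound, and your proposal does not supply one.
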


\begin{proof}
For $0\le v\le\sqrt2$, the function
$v\mapsto\sqrt{4-v^2}+v$ is increasing.  Since $V$ is increasing and
$u\ge\sqrt{4-v^2}$, equations~\eqref{eq:C2-master-lower},
\eqref{eq:G-ge-G10}, and~\eqref{eq:G10-piecewise} give the first
four rows of Table~\ref{tab:same-corner-certificate}.  In the fifth
row we discard the nonnegative term $G$.  If $v\ge\sqrt2$, then
$u\ge v$ and the last row follows directly.

\begin{table}[htbp]
\centering
\small
\begin{tabular}{c|c|c}
range of $v$ & certified lower expression & certified lower bound\\
\hline
$[0,1/10]$
& $13\pi+V_{10}(\sqrt{399}/10)+117/100$
& $>46.7626$\\
$[1/10,1/5]$
& $\pi(9+2(\sqrt{399}/10+1/10))+V_{10}(\sqrt{99}/5)+51/50$
& $>47.1582$\\
$[1/5,2/5]$
& $\pi(9+2(\sqrt{99}/5+1/5))+V_{10}(\sqrt{96}/5)+79/100$
& $>47.2566$\\
$[2/5,9/10]$
& $\pi(9+2(\sqrt{96}/5+2/5))+V_{10}(\sqrt{319}/10)+2/5$
& $>46.8824$\\
$[9/10,\sqrt2]$
& $\pi(9+2(\sqrt{319}/10+9/10))+V_{10}(\sqrt2)$
& $>47.4055$\\
$[\sqrt2,\infty)$
& $\pi(9+4\sqrt2)+V_{10}(\sqrt2)$
& $>48.3000$
\end{tabular}
\caption{Exact-arithmetic lower certificate for the same-corner cell.}
\label{tab:same-corner-certificate}
\end{table}

For the one-hole upper bound, put
$\rho^2=(s-1)^2+(t-1)^2$ and
\[
 f(\rho)=\frac{16}{5}\rho^{-3}
 -\frac{31}{10}\rho^{-4}+\frac9{10}\rho^{-5},
 \qquad \Phi(s,t)=st f(\rho).
\]
Since
\[
 f(1)=\frac{16}{5}-\frac{31}{10}+\frac9{10}=1,
\]
the trial function satisfies $\Phi=st$ on the unit circle; moreover $\Phi$ vanishes on the axes and has finite energy.  Let
\[
 D:=\{(s,t)\in(0,1)^2:(s-1)^2+(t-1)^2\ge1\}.
\]
Direct polar integration over $P_{1,1}\setminus D$ gives
\begin{equation}\label{eq:E11-trial-main}
 \int_{P_{1,1}\setminus D}|\nabla\Phi|^2
 =\frac{31545329}{4410000}
 +\frac{28115197\pi}{10752000}.
\end{equation}
The two finite polar sectors contained in $D$ must also be included.  On $1\le\rho\le\sqrt2$ one has $0<f\le1$ and $|f'|\le17/10$.  Indeed,
\[
 f(\rho)=\frac{32\rho^2-31\rho+9}{10\rho^5}>0,
 \qquad
 -f'(\rho)=\frac{96\rho^2-124\rho+45}{10\rho^6},
\]
the two numerator quadratics have negative discriminants and positive leading coefficients.  Hence $f>0$ and $-f'>0$.  Moreover, the derivative of $-f'$ has the sign of $-(192\rho^2-310\rho+135)$; this last quadratic also has negative discriminant and positive leading coefficient.  Thus $-f'$ decreases from $17/10$, and, since $f(1)=1$, the asserted bounds follow.  Minkowski's inequality therefore yields
\begin{align}
 \left(\int_D|\nabla\Phi|^2\right)^{1/2}
 &\le
 \left(\int_D|\nabla(st)|^2\right)^{1/2}
 +\frac{17}{10}\left(\int_Ds^2t^2\right)^{1/2} \notag\\
 &=\sqrt{2-\frac{5\pi}{8}}
 +\frac{17}{10}\sqrt{\frac{109}{90}-\frac{37\pi}{96}}.
 \label{eq:E11-wedge}
\end{align}
Here the two moments follow by elementary integration over the part of the
unit square outside the quarter disk.  Combining
\eqref{eq:E11-trial-main}--\eqref{eq:E11-wedge} with
$\mathcal C(1,1)=\mathcal E(1)+5\pi/2$ gives
\begin{align}
 2\mathcal C(1,1)
 &\le 2\left[
 \frac{31545329}{4410000}
 +\frac{28115197\pi}{10752000}
 +\left(
 \sqrt{2-\frac{5\pi}{8}}
 +\frac{17}{10}\sqrt{\frac{109}{90}-\frac{37\pi}{96}}
 \right)^2\right]+5\pi \notag\\
 &<\frac{931}{20}.
 \label{eq:C11-corrected-upper}
\end{align}

The source package contains
\texttt{same\_corner\_certificate.py}.  It uses rational interval
arithmetic, Machin's formula for $\pi$, and integer-square-root enclosures.
The analytic argument above supplies the integral formulas and elementary inequalities.  The program then performs their exact-arithmetic verification: it generates the degree-$23$ polynomial directly from the integral in~\eqref{eq:G10-lower}, verifies all four bounds in~\eqref{eq:G10-piecewise}, every row of Table~\ref{tab:same-corner-certificate}, and the upper bound~\eqref{eq:C11-corrected-upper}.  Thus the strict gap is independent of floating-point assumptions, while the role of the program is verification rather than replacement of the analytic derivation.  Further reproducibility details are given in Appendix~\ref{app:certificate}.
\end{proof}

\begin{corollary}[Same-corner full-capacity exclusion]
\label{cor:same-corner-cell-exclusion}
For every admissible same-corner pair $(c_1,c_2)$,
\[
 \mathcal C_2(c_1,c_2)>2\mathcal C(1,1).
\]
\end{corollary}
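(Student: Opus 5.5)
The corollary is a restatement of the final assertion of Proposition~\ref{prop:same-corner-certified-gap}, so the plan is to assemble that proposition's two certified bounds and check that its hypotheses cover every admissible same-corner pair. Concretely, I would take an arbitrary pair $c_i=(a_i,b_i)$ with $a_i,b_i\ge1$ and $|c_1-c_2|\ge2$. I would then chain
\[
 \mathcal C_2(c_1,c_2)>\frac{1169}{25}=46.76>46.55=\frac{931}{20}>2\mathcal C(1,1),
\]
using~\eqref{eq:C2-certified-gap} on the left and~\eqref{eq:two-C11-certified-upper} on the right.

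The first step is to confirm that the lower bound~\eqref{eq:C2-certified-gap} really holds for every admissible pair, not just a normalized one. After interchanging the coordinate axes (a symmetry of $\Gamma$ preserving $st$, so $\mathcal C_2$ is unchanged) we may assume $u=|a_1-a_2|\ge v=|b_1-b_2|$. Relabeling the disks so that $a_1\le a_2$ changes nothing either. Under these normalizations, $u^2+v^2\ge4$ forces $u\ge\max\{\sqrt2,\sqrt{4-v^2}\}$. The rows of Table~\ref{tab:same-corner-certificate} cover $v\in[0,\infty)$ completely: $[0,1/10]$, $[1/10,1/5]$, $[1/5,2/5]$, $[2/5,9/10]$, $[9/10,\sqrt2]$, $[\sqrt2,\infty)$. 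Within each row, the master inequality~\eqref{eq:C2-master-lower} is reduced to the tabulated value by three facts. First, $V\ge V_{10}$ and $V$ is increasing. Second, $u+v\ge\sqrt{4-v^2}+v$, which is increasing for $v\le\sqrt2$. Third, the gap term satisfies $G\ge\underline G_{10}$ via~\eqref{eq:G-ge-G10} and~\eqref{eq:G10-piecewise}, or else $G$ is discarded as nonnegative. The smallest row value is $46.7626>46.76$.

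The second step is the upper bound on $2\mathcal C(1,1)$. This comes from the Dirichlet principle: the trial function $\Phi$ is admissible for $\mathcal F(1,1)=\mathcal E(1)$, so $\mathcal E(1)\le\int_{P_{1,1}}|\nabla\Phi|^2$. I would split that integral into~\eqref{eq:E11-trial-main} and the corner-pocket estimate~\eqref{eq:E11-wedge}, then add $5\pi/2$, obtaining~\eqref{eq:C11-corrected-upper}.

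The only real obstacle is the soundness of the lower bound~\eqref{eq:E2-ge-V}--\eqref{eq:C2-master-lower}. Two points need care there. One is that the vertical and horizontal Cauchy--Schwarz contributions use different partial derivatives and may therefore be summed even where their regions meet, since $|\nabla W|^2\ge W_s^2$ and $|\nabla W|^2\ge W_t^2$ and $\int(W_s^2+W_t^2)=\int|\nabla W|^2$. The other is the rigorous evaluation of the degree-$23$ polynomial bounds. I would treat the latter as settled by the exact rational interval certificate of Appendix~\ref{app:certificate}. Granting the proposition, the corollary follows immediately by strict transitivity.
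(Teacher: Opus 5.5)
Your proposal is correct and follows the paper's route: the corollary is just the final assertion of Proposition~\ref{prop:same-corner-certified-gap}, obtained by chaining \eqref{eq:C2-certified-gap} with \eqref{eq:two-C11-certified-upper}. Your checks of the normalization $u\ge v$, the coverage of all $v$ by the rows of Table~\ref{tab:same-corner-certificate}, the admissibility of $\Phi$ in the Dirichlet principle, and the summing of the vertical and horizontal Cauchy--Schwarz bounds all match the paper's derivation.
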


\subsection{Same-corner small-hole asymptotic reduction}

For an admissible pair $(c_1,c_2)$ with $c_i=(a_i,b_i)$, define the physical same-corner configuration
\[
\Omega_r(c_1,c_2):=
Q\setminus\Bigl(
\overline{B_r(1-a_1r,\,1-b_1r)}
\cup
\overline{B_r(1-a_2r,\,1-b_2r)}
\Bigr).
\]
Equivalently, both holes lie in the top-right $O(r)$ corner layer, with scaled centers $c_1,c_2$.

\begin{proposition}[Same-corner small-hole asymptotic reduction]\label{prop:same-corner-small-hole-asymptotic}
Let
\[
\mathcal A:=\{(c_1,c_2): c_i=(a_i,b_i),\ a_i,b_i\ge1,\ |c_1-c_2|\ge2\}.
\]
For every compact set $\mathcal K\subset\mathcal A$,
\[
\lambda_1(\Omega_r(c_1,c_2))
=
\frac{\pi^2}{2}
+\frac{\pi^4}{16}\,\mathcal C_2(c_1,c_2)\,r^4
+o_{\mathcal K}(r^4)
\qquad(r\to0),
\]
uniformly for $(c_1,c_2)\in\mathcal K$.
\end{proposition}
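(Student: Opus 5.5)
The plan mirrors the proofs of Proposition~\ref{prop:endpoint-asymptotic} and Proposition~\ref{prop:M-equals-4E}, now with two holes in one corner cell.

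\textbf{Step 1: double reflection.} Reflect oddly across $y=1$ and then across $x=1$. As in the one-hole endpoint analysis, $\Omega_r(c_1,c_2)$ becomes the fixed square $\widehat Q=(-1,3)^2$ with the interior compact defect
\[
(1,1)+rK(c_1,c_2),\qquad K(c_1,c_2)=K_{a_1,b_1}\cup K_{a_2,b_2},
\]
where $K_{a,b}$ is the four-disk set introduced before~\eqref{eq:reflection-capacity-identities}. With $X=-(x-1)/r$ and $Y=-(y-1)/r$, the reflected eigenfunction is $\widehat u_0/2$. Its leading homogeneous term at $(1,1)$ is $P=(\pi^2/8)XY$, with a remainder $O(r^4)$ in $C^1_{\rm loc}$. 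The eigenvalue $\pi^2/2$ is the simple fourth eigenvalue of $\widehat Q$.

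Proposition~\ref{prop:reflection}, applied twice, identifies $\lambda_1(\Omega_r(c_1,c_2))$ with the lowest odd--odd eigenvalue of $\widehat Q\setminus\bigl((1,1)+rK(c_1,c_2)\bigr)$. Exactly as in the remark after Proposition~\ref{prop:boundary-simple-eigenvalue}, this is the unique full-spectrum branch converging to $\pi^2/2$, for all small $r$, uniformly over $\mathcal K$. Uniformity holds because all defects lie in a fixed ball of radius $O(r)$ and satisfy $\operatorname{Cap}\to0$, so Mosco convergence to $H_0^1(\widehat Q)$ holds uniformly.

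One caveat concerns corner pockets. A pocket cut off between a hole and the axes, or between the two holes and an axis, carries eigenvalues of order $r^{-2}$, so it never competes with the branch near $\pi^2/2$.

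\textbf{Step 2: blow-up lemma.} Apply Lemma~\ref{lem:full-capacity-blowup} with $k=2$ and $\Theta=\mathcal K$. The family consists of eight unit disks with continuous centers $(\pm a_i,\pm b_i)$ and equal radii $1$. All lie in a fixed ball, since $\mathcal K$ is compact. Their interiors are pairwise disjoint: across reflections because $a_i,b_i\ge1$, and between the two holes because $|c_1-c_2|\ge2$.

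Contacts are allowed by the lemma. Disk--boundary tangencies of the original problem have become disk--disk contacts of the reflected configuration, which is exactly the case the lemma's recovery construction covers. The lemma yields, uniformly on $\mathcal K$,
\[
\lambda_1(\Omega_r(c_1,c_2))=\frac{\pi^2}{2}+\frac{\pi^4}{64}\,\mathfrak C(K(c_1,c_2),XY)\,r^4+o_{\mathcal K}(r^4).
\]

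\textbf{Step 3: reduction to the quadrant cell.} Invoke~\eqref{eq:reflection-capacity-identities}, which gives $\mathfrak C(K(c_1,c_2),XY)=4\mathcal C_2(c_1,c_2)$. To justify it, note that the unique minimizer $\Phi$ is odd in $X$ and in $Y$ by uniqueness and the symmetry of $K(c_1,c_2)$ and $XY$. Its first-quadrant restriction therefore vanishes on the axes, equals $st$ on $H_1\cup H_2$, and is harmonic with finite energy elsewhere, so it equals $W_{c_1,c_2}$ off the disks. Summing the energies over the four quadrants gives $4(\mathcal E_2+\sum_i\int_{H_i}|\nabla(st)|^2)$. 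Substituting, $\pi^4/64\cdot4=\pi^4/16$, which is the claimed expansion.

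\textbf{Main obstacle.} I expect the main difficulty to be the uniform identification of the relevant branch together with the handling of contacts inside Lemma~\ref{lem:full-capacity-blowup}. Parameters in $\mathcal K$ can have $|c_1-c_2|=2$ or $a_i=1$, so contact points appear and disappear within the family, and one needs the continuity of $\theta\mapsto\mathfrak C(K_\theta,P)$ across such transitions.

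The lemma's recovery construction handles a fixed limiting contact set. Along a sequence $\theta_n\to\theta_*$, the approximating configurations either have no contacts near those points or have contacts only at them, so transport by the tubular diffeomorphisms still works. I would check this explicitly first, then conclude uniformity by the contradiction argument at the end of that lemma.
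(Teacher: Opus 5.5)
Your proposal is correct and follows the paper's own proof: double odd reflection to $\widehat Q=(-1,3)^2$ with normalized eigenfunction $\widehat u_0/2$ and homogeneous term $(\pi^2/8)XY$, then Lemma~\ref{lem:full-capacity-blowup} with $k=2$ uniformly on $\mathcal K$, then the odd-symmetry identity $\mathfrak C(K(c_1,c_2),XY)=4\mathcal C_2(c_1,c_2)$. Your extra remarks on corner pockets and on contacts appearing or disappearing within $\mathcal K$ are handled in the paper by the branch-identification remark and by the continuity/uniformity part of that lemma, so they add care rather than a different route.
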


\begin{proof}
After odd reflection across the top and right sides, the two physical disks generate the compact set $(1,1)+rK(c_1,c_2)$ in $\widehat Q$.  The normalized reflected eigenfunction is $\widehat\phi_0=\widehat u_0/2$ and has local homogeneous term $(\pi^2/8)XY$.  Lemma~\ref{lem:full-capacity-blowup} and~\eqref{eq:reflection-capacity-identities} give
\[
\lambda_1(\Omega_r(c_1,c_2))
=
\frac{\pi^2}{2}
+\frac{\pi^4}{64}\,\mathfrak C(K(c_1,c_2),XY)r^4
+o_{\mathcal K}(r^4)
\]
\[
=
\frac{\pi^2}{2}
+\frac{\pi^4}{16}\,\mathcal C_2(c_1,c_2)r^4
+o_{\mathcal K}(r^4).
\]
\end{proof}

\begin{theorem}[Same-corner branch exclusion]\label{thm:same-corner-branch-complete}
For every compact set $\mathcal K\subset\mathcal A$, same-corner configurations have a uniformly larger leading coefficient than two independent true-corner cells, in the precise sense that
\[
\liminf_{r\to0}\ \inf_{(c_1,c_2)\in\mathcal K}
\frac{\lambda_1(\Omega_r(c_1,c_2))-\frac{\pi^2}{2}
-\frac{\pi^4}{8}\mathcal C(1,1)r^4}{r^4}>0.
\]
Hence no minimizing sequence in a fixed compact corner-scale class can place both holes at the same corner.
\end{theorem}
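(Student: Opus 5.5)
The proof combines the uniform asymptotic expansion of Proposition~\ref{prop:same-corner-small-hole-asymptotic} with the strict cell inequality of Corollary~\ref{cor:same-corner-cell-exclusion}. The key point is to upgrade the pointwise strict inequality to a uniform positive gap on $\mathcal K$, which requires continuity of the cell capacity on $\mathcal K$.

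\emph{Step 1: expansion of the numerator.} Fix a compact $\mathcal K\subset\mathcal A$. Proposition~\ref{prop:same-corner-small-hole-asymptotic} gives, uniformly for $(c_1,c_2)\in\mathcal K$,
\[
\frac{\lambda_1(\Omega_r(c_1,c_2))-\frac{\pi^2}{2}-\frac{\pi^4}{8}\mathcal C(1,1)r^4}{r^4}
=\frac{\pi^4}{16}\Bigl(\mathcal C_2(c_1,c_2)-2\mathcal C(1,1)\Bigr)+o_{\mathcal K}(1).
\]
Hence the $\liminf$ of the infimum over $\mathcal K$ equals
\[
\frac{\pi^4}{16}\inf_{\mathcal K}\bigl(\mathcal C_2-2\mathcal C(1,1)\bigr),
\]
provided this infimum is positive.

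\emph{Step 2: a uniform gap from the certificate.} Proposition~\ref{prop:same-corner-certified-gap} is in fact quantitative: for every admissible pair, not only those in $\mathcal K$,
\[
\mathcal C_2(c_1,c_2)-2\mathcal C(1,1)>\frac{1169}{25}-\frac{931}{20}=\frac{21}{100}.
\]
So the infimum is at least $21/100$ and no compactness argument is needed here. The $\liminf$ is therefore at least $\frac{\pi^4}{16}\cdot\frac{21}{100}>0$.

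\emph{Step 3: continuity check.} If one prefers not to rely on the explicit numbers, use the continuity of $\theta\mapsto\mathfrak C(K_\theta,P)$ on compact parameter sets, which the proof of Lemma~\ref{lem:full-capacity-blowup} establishes. Applying it to the disk family $K(c_1,c_2)$, whose radii are fixed and whose centers vary continuously, and using~\eqref{eq:reflection-capacity-identities}, shows that $\mathcal C_2$ is continuous on $\mathcal K$. The strict inequality of Corollary~\ref{cor:same-corner-cell-exclusion} then yields a positive minimum on $\mathcal K$.

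The main technical point is this uniformity. Tangencies such as $|c_1-c_2|=2$ or $a_i=1$ create contact points between the reflected disks. These are already handled in Lemma~\ref{lem:full-capacity-blowup} by the zero-capacity cutoff argument, so both the expansion in Step~1 and the continuity in Step~3 remain valid up to such tangencies.

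\emph{Step 4: the final assertion.} Suppose a sequence in the compact class places both holes at the same corner. By Step~2 its eigenvalue exceeds $\pi^2/2+\frac{\pi^4}{8}\mathcal C(1,1)r^4$ by at least a fixed multiple of $r^4$.

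For comparison, take the competitor with the two holes tangent at two adjacent true corners. This configuration is a distinct-corner case, treated later in the paper, whose expansion is $\pi^2/2+\frac{\pi^4}{8}\mathcal C(1,1)r^4+o(r^4)$.

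Within the same-corner class itself, the stated inequality already separates the same-corner configurations from the value $\frac{\pi^2}{2}+\frac{\pi^4}{8}\mathcal C(1,1)r^4$ by a fixed multiple of $r^4$. This is exactly the exclusion claimed in the theorem.
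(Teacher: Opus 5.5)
Your proposal is correct and follows the paper's proof. Like the paper, you combine the compact-uniform expansion of Proposition~\ref{prop:same-corner-small-hole-asymptotic} with the strict cell inequality, and your Step~3 is exactly the paper's continuity-and-compactness argument for a positive minimum of $\mathcal C_2-2\mathcal C(1,1)$ on $\mathcal K$.

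Your Step~2 is a valid shortcut. It reads the uniform gap $\mathcal C_2-2\mathcal C(1,1)>\tfrac{1169}{25}-\tfrac{931}{20}=\tfrac{21}{100}$ directly off the certificate, which gives the explicit lower bound $\tfrac{21\pi^4}{1600}$ for the $\liminf$. Compactness of $\mathcal K$ is then needed only for the uniform remainder. Your Step~4 matches how the paper later uses this theorem against the adjacent true-corner competitor in Theorem~\ref{thm:compact-corner-minimization}.
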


\begin{proof}
Combine Proposition~\ref{prop:same-corner-small-hole-asymptotic} with the strict inequality
$\mathcal C_2(c_1,c_2)>2\mathcal C(1,1)$ from Corollary~\ref{cor:same-corner-cell-exclusion}.  By continuity of the full cell capacity, the positive gap
$\mathcal C_2-2\mathcal C(1,1)$ has a positive minimum on $\mathcal K$, and the compact-uniform remainder in Proposition~\ref{prop:same-corner-small-hole-asymptotic} yields the displayed lower bound.
\end{proof}

\section{Numerical validation}\label{sec:numerics}

We supplement the analysis with a numerical comparison of representative branches. All code, CSV files, and figures used in this section are archived in the dataset~\cite{Zhang2026ZenodoFEM}. The numerics are not part of the proof; their role is to provide a transparent, reproducible check of the four representative two-hole branches singled out by the analysis.

The archived benchmark is a conforming boundary-fitted P1 finite element computation. The outer square is represented polygonally with eight boundary segments per side, and each circular obstacle is approximated by a $32$-gon (eight segments per quadrant; equivalently, the dataset parameter \texttt{quad\_segs} equals $8$). A constrained Delaunay triangulation of the resulting polygonal domain is generated, followed by two uniform red refinements, giving the three displayed mesh levels (the initial mesh and two refinements). Stiffness and mass matrices are assembled on the P1 space, homogeneous Dirichlet conditions are imposed on the outer boundary and on the obstacle boundaries by eliminating boundary degrees of freedom, and the smallest generalized eigenpair of the reduced stiffness--mass pencil is computed.

To keep tangent geometries robustly meshable and exactly reproducible, the benchmark introduces a very small inward geometric inset
\[
\varepsilon_{\mathrm{geom}}=5\times 10^{-4},
\]
so that nominally tangent configurations are meshed with an offset that is tiny compared with the tested radii. Because the computation directly discretizes the finite-radius punctured domains, the corrected normalization and full-capacity convention used in the analytical sections do not alter the FEM values reported below. The data nevertheless should be read only as a qualitative validation of representative branch ordering and relative scale, not as an independent determination of the corrected asymptotic coefficients. This is particularly relevant for the adjacent-versus-opposite comparison, where the computed gap is much smaller than the absolute discretization error visible in the empty-square test.

The four representative two-hole branches used in the benchmark are:
\begin{enumerate}[label=\textnormal{(\alph*)}]
\item two holes tangent to adjacent corners,
\item two holes tangent to opposite corners,
\item two holes tangent to opposite sides at the center,
\item a same-corner clustered contact-like configuration.
\end{enumerate}
Figure~\ref{fig:fem-geometries} shows these four benchmark geometries.

\begin{figure}[t]
\centering
\includegraphics[width=0.74\textwidth]{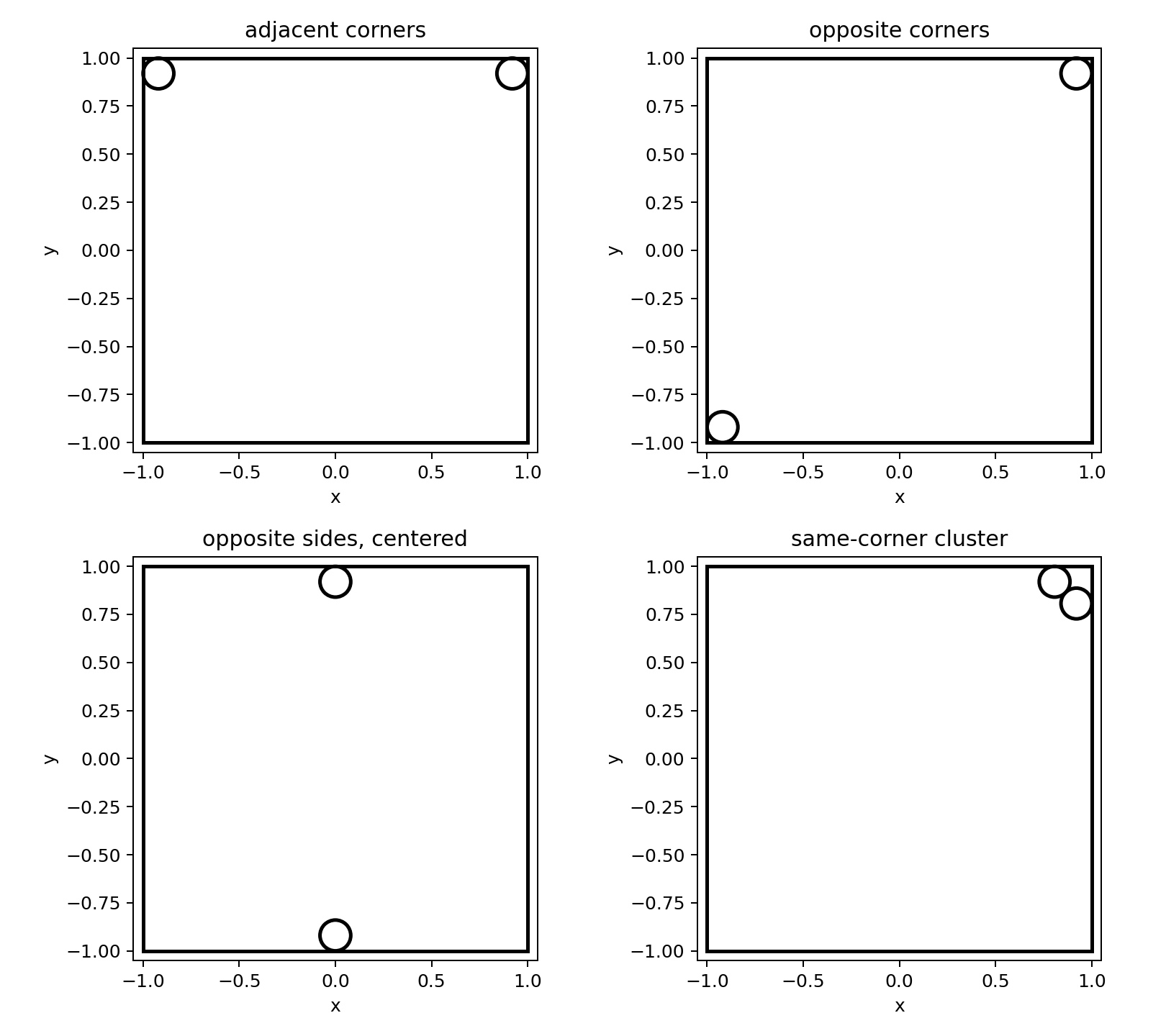}
\caption{Representative two-hole geometries used in the FEM validation package: adjacent corners, opposite corners, opposite sides at the center, and a same-corner clustered contact-like configuration.}
\label{fig:fem-geometries}
\end{figure}

For the representative radius $r=0.08$, the dataset also contains the corresponding first FEM eigenfunctions. Figure~\ref{fig:fem-eigenfunctions} shows the first mode on the four benchmark geometries. The plots agree with the ordering in Table~\ref{tab:fem-case-table}: the opposite-sides configuration constricts the domain much more severely than the corner branches, while the same-corner cluster is already less favorable than the adjacent- and opposite-corner competitors.

\begin{figure}[t]
\centering
\begin{minipage}{0.48\textwidth}
\centering
\includegraphics[width=0.92\textwidth]{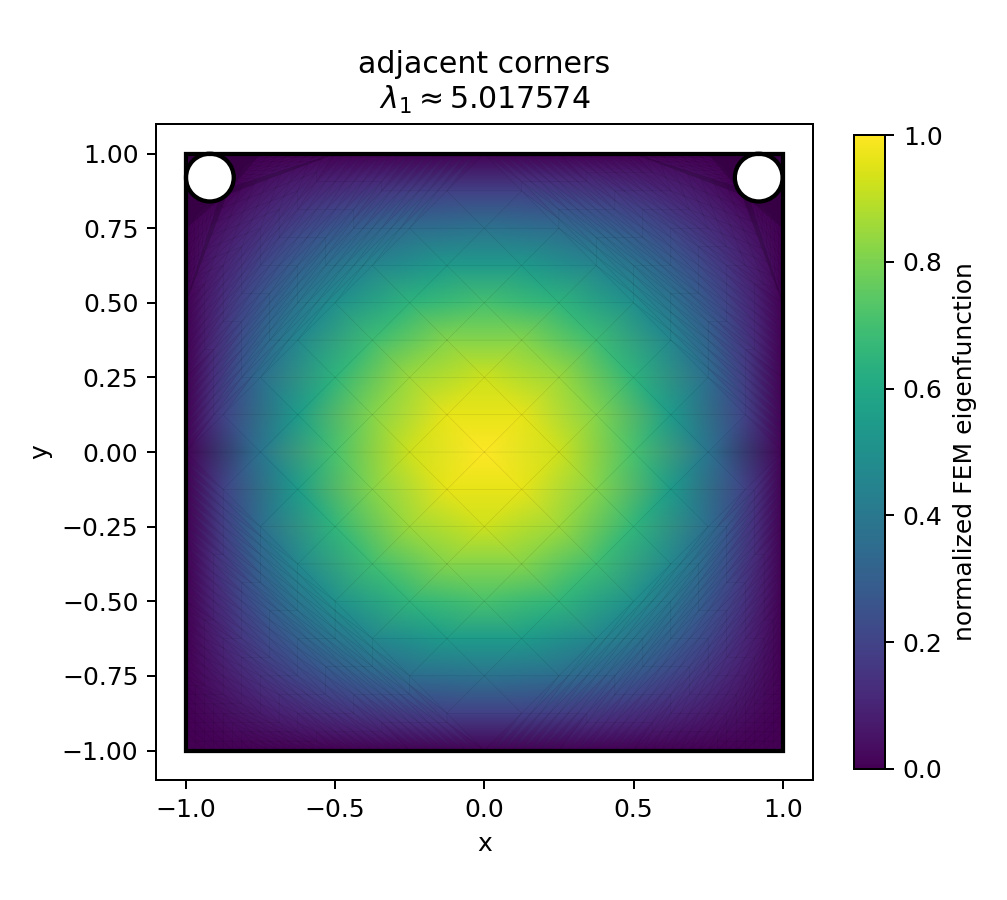}\\[-1mm]
{\footnotesize adjacent corners}
\end{minipage}\hfill
\begin{minipage}{0.48\textwidth}
\centering
\includegraphics[width=0.92\textwidth]{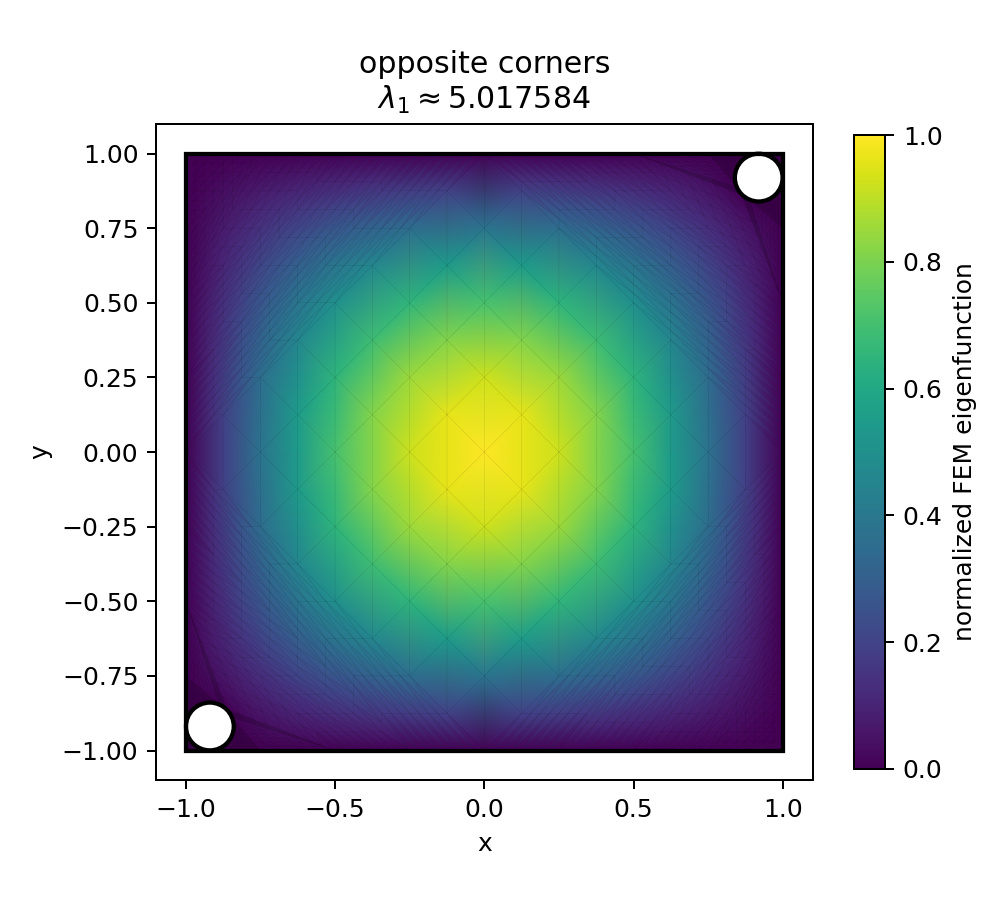}\\[-1mm]
{\footnotesize opposite corners}
\end{minipage}

\medskip
\begin{minipage}{0.48\textwidth}
\centering
\includegraphics[width=0.92\textwidth]{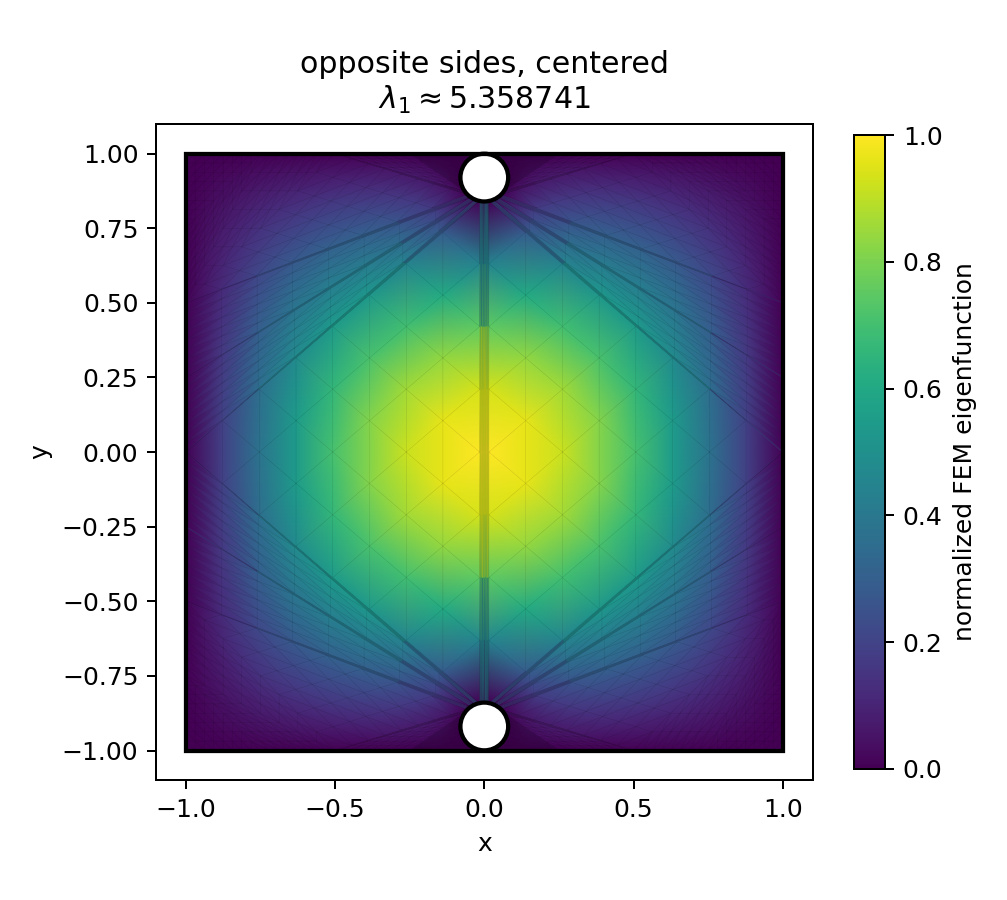}\\[-1mm]
{\footnotesize opposite sides at the center}
\end{minipage}\hfill
\begin{minipage}{0.48\textwidth}
\centering
\includegraphics[width=0.92\textwidth]{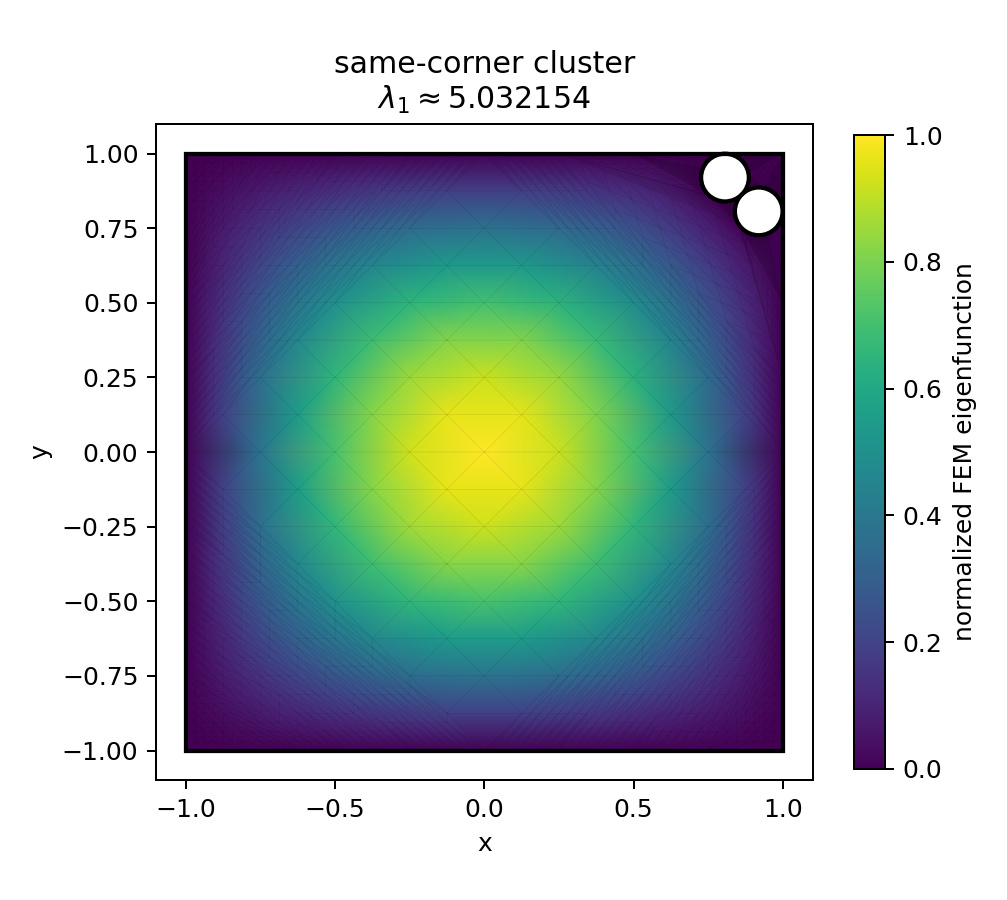}\\[-1mm]
{\footnotesize same-corner cluster}
\end{minipage}
\caption{First FEM eigenfunctions for the four representative benchmark geometries at $r=0.08$.}
\label{fig:fem-eigenfunctions}
\end{figure}

As a basic consistency check, the FEM package also records the empty-square reference against the exact value $\lambda_1(Q)=\pi^2/2\approx 4.934802200545$. The corresponding convergence table appears in Table~\ref{tab:fem-convergence}. The approximation decreases over the three displayed refinement levels toward the exact value, as expected for the conforming Dirichlet FEM.  At the finest displayed level the absolute error is approximately $7.13\times10^{-2}$; this does not directly measure the error in a paired branch difference, where substantial cancellation may occur, but it shows that the $10^{-6}$--$10^{-5}$ adjacent--opposite differences are not numerically certified by this convergence table.

\begin{table}[H]
\centering
\caption{Empty-square FEM convergence from the benchmark.}
\label{tab:fem-convergence}
\begin{tabular}{cccc}
\toprule
refinement level & nodes & triangles & $\lambda_1$ \\
\midrule
1 & 93 & 120 & 6.231004 \\
2 & 305 & 480 & 5.225946 \\
3 & 1089 & 1920 & 5.006070 \\
\bottomrule
\end{tabular}
\end{table}

For the four representative two-hole branches, Table~\ref{tab:fem-case-table} reports the values produced by the benchmark package for $r\in\{0.07,0.08,0.09\}$. The same ordering is observed at all three radii:
\[
\lambda_{\mathrm{adjacent}}
<
\lambda_{\mathrm{opposite}}
<
\lambda_{\mathrm{cluster}}
<
\lambda_{\mathrm{opp-side}}.
\]
In particular, the adjacent-corner branch is always better than the opposite-corner branch, while both corner branches are substantially better than the same-corner clustered and opposite-sides competitors. The numerical gap between adjacent and opposite corners is tiny, but the separation from the non-minimizing branches is much larger and remains clearly visible on the absolute scale.

\begin{table}[H]
\centering
\caption{Representative FEM eigenvalues for the four benchmark branches.}
\label{tab:fem-case-table}
\begin{tabular}{ccccc}
\toprule
$r$ & adjacent corners & opposite corners & same-corner cluster & opposite sides \\
\midrule
0.07 & 5.012947 & 5.012949 & 5.021807 & 5.281190 \\
0.08 & 5.017574 & 5.017584 & 5.032154 & 5.358741 \\
0.09 & 5.024208 & 5.024236 & 5.046506 & 5.446469 \\
\bottomrule
\end{tabular}
\end{table}

The adjacent-versus-opposite gap is small on the absolute scale, so it is clearer to show it both in the main branch plot and in a dedicated gap plot. Figure~\ref{fig:fem-branch-ordering} does exactly this. The benchmark gives
\[
\lambda_{\mathrm{opp}}-\lambda_{\mathrm{adj}}
\approx
2.72\times 10^{-6},\ 1.01\times 10^{-5},\ 2.76\times 10^{-5}
\]
for $r=0.07,0.08,0.09$, respectively, with a positive sign consistent with the exact polarization theorem.

\begin{figure}[H]
\centering
\begin{minipage}{0.48\textwidth}
\centering
\includegraphics[width=\textwidth]{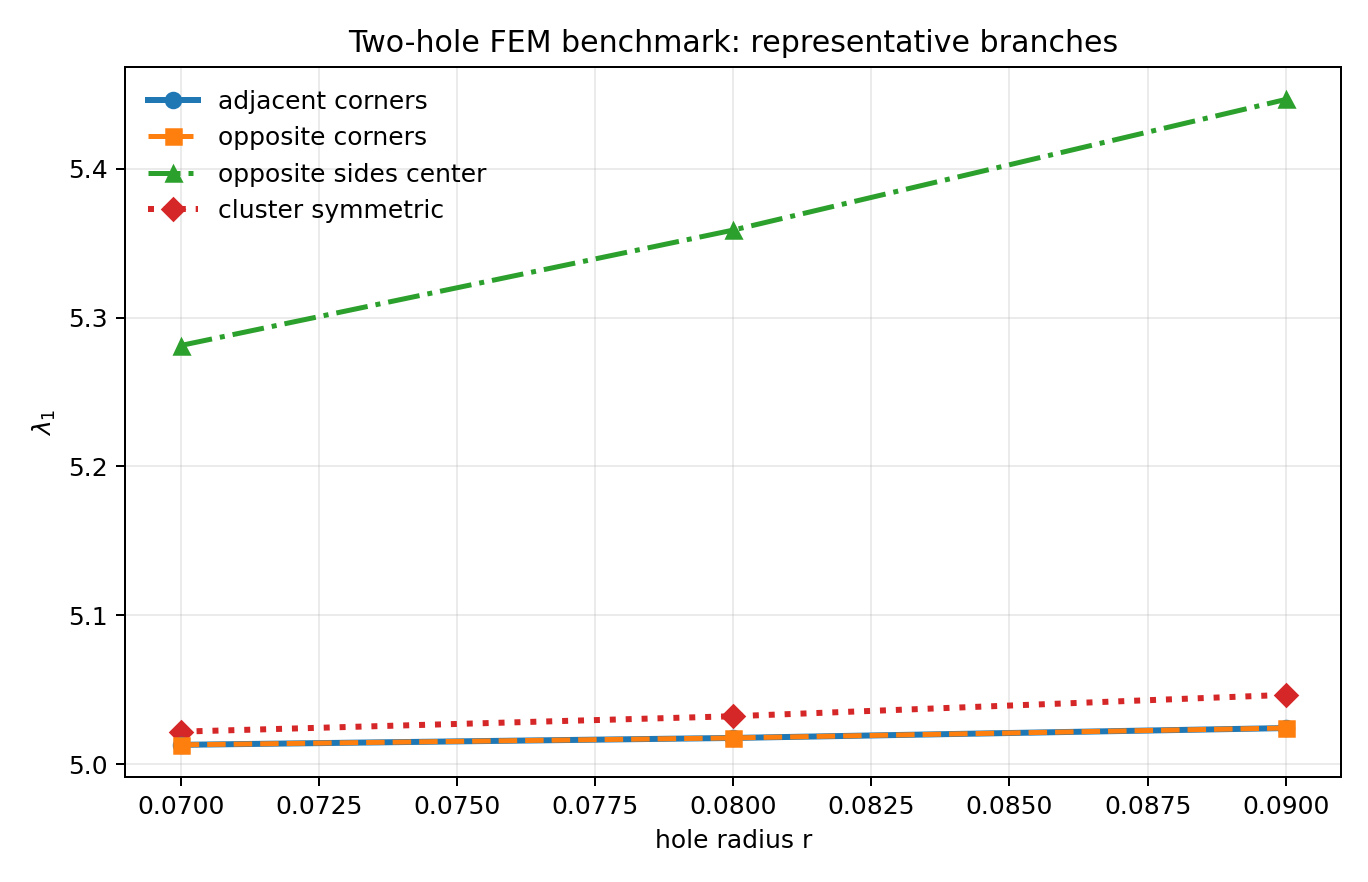}
\end{minipage}\hfill
\begin{minipage}{0.48\textwidth}
\centering
\includegraphics[width=\textwidth]{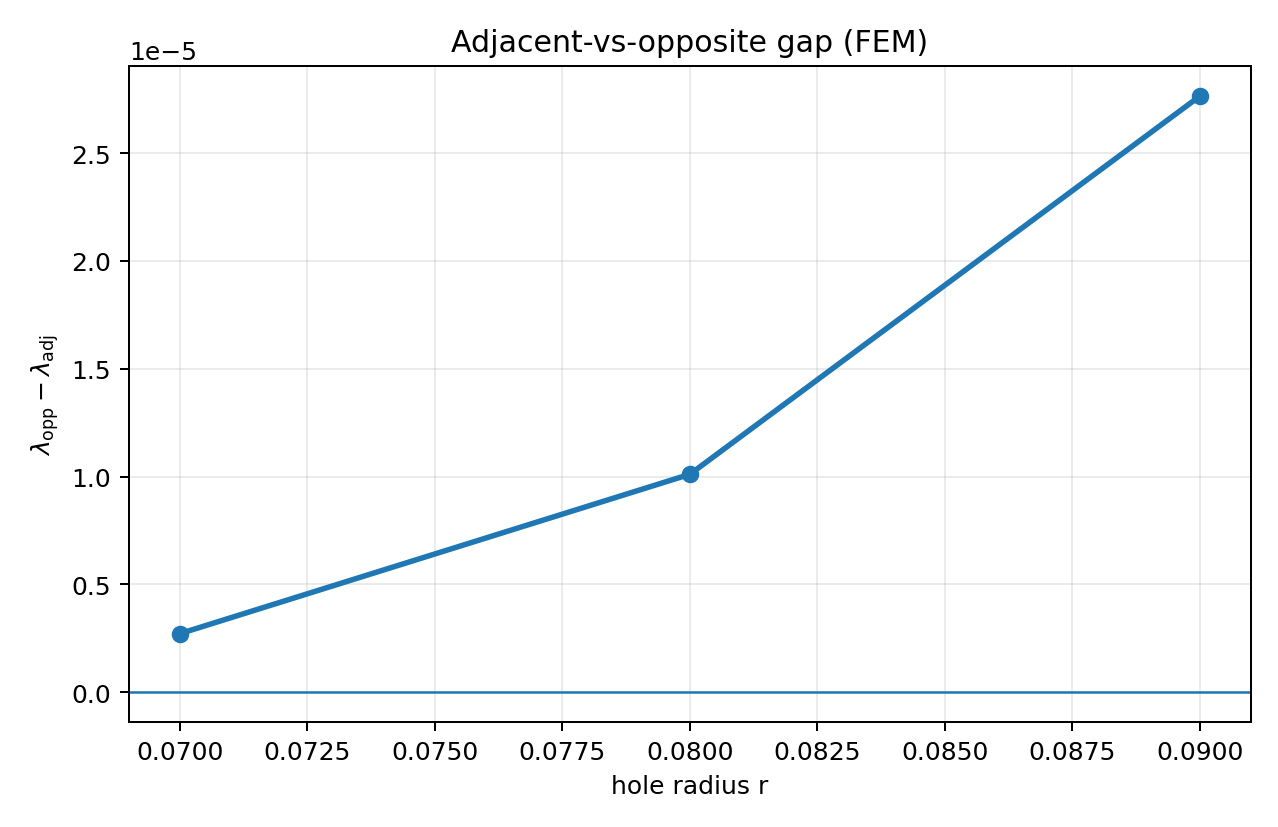}
\end{minipage}
\caption{Left: FEM comparison of the four representative two-hole branches. Right: the isolated positive gap $\lambda_{\mathrm{opp}}-\lambda_{\mathrm{adj}}$, showing that the adjacent-corner branch stays below the opposite-corner branch across the sampled radii.}
\label{fig:fem-branch-ordering}
\end{figure}

The dataset also samples the same-corner cross-axis contact family
\[
(a_1,b_1)=(a,1),\qquad (a_2,b_2)=(1,b),\qquad (a-1)^2+(b-1)^2=4,
\]
at $r=0.08$. Table~\ref{tab:fem-contact-scan} records a representative excerpt. The minimum sampled value occurs near $\theta=0.8942$ and remains well above both the adjacent-corner and opposite-corner values at the same radius.

\begin{table}[H]
\centering
\caption{Excerpt from the sampled same-corner contact-family scan at $r=0.08$.}
\label{tab:fem-contact-scan}
\begin{tabular}{ccc}
\toprule
$\theta$ & $(a,b)$ & $\lambda_1$ \\
\midrule
$0.3500$ & $(2.8787,1.6858)$ & $5.032725$ \\
$\pi/4$ & $(2.4142,2.4142)$ & $5.032154$ \\
$0.8942$ & $(2.2522,2.5595)$ & $5.032120$ \\
$1.2208$ & $(1.6858,2.8787)$ & $5.032724$ \\
\bottomrule
\end{tabular}
\end{table}

The entries at $\theta=0.3500$ and $\theta=1.2208$ are related by interchange of the coordinate axes and hence should agree exactly in the continuum problem. Their discrepancy of about $10^{-6}$ provides an internal indication of the numerical noise at this mesh and geometry resolution; it is already comparable with the smallest adjacent--opposite gap in Table~\ref{tab:fem-case-table}. At the sampled minimum one has $\lambda_1\approx 5.032120$, still well above the adjacent-corner value $5.017574$ and also above the opposite-corner value $5.017584$. Thus the sampled contact family does not come close to challenging the corner branches. Overall, the numerical package is consistent with the analytical ordering of the representative branches. It is not a global numerical optimization over the full four-dimensional configuration space, and the sign of the very small adjacent--opposite gap is established by the polarization theorem rather than certified by the reported mesh computation. A quantitative numerical resolution of that gap would require paired mesh refinement, refinement of the polygonal circles, variation of $\varepsilon_{\mathrm{geom}}$, and extrapolation or a posteriori error control.

\section{Distinct-corner asymptotics and global minimization}\label{sec:distinct-global}

The remaining branch in a compact corner-scale class is the case in which the two holes lie near two distinct corners.  Since the concentration points are separated by a fixed positive distance, the leading full capacities are additive.

\begin{proposition}[Separated-corner additivity]\label{prop:separated-corner-additivity}
Fix two distinct corners
\[
 p=(\varepsilon_1,\varepsilon_2),
 \qquad
 q=(\delta_1,\delta_2)
 \qquad
 \text{in }\{\pm1\}^2
\]
and a number $R>1$.  For $(a_p,b_p),(a_q,b_q)\in[1,R]^2$, define
\begin{align*}
 x_{p,r}&:=\bigl(\varepsilon_1(1-a_pr),\varepsilon_2(1-b_pr)\bigr),
 &K_{p,r}&:=\overline{B_r(x_{p,r})},\\
 x_{q,r}&:=\bigl(\delta_1(1-a_qr),\delta_2(1-b_qr)\bigr),
 &K_{q,r}&:=\overline{B_r(x_{q,r})}.
\end{align*}
Then
\[
\lambda_1\bigl(Q\setminus(K_{p,r}\cup K_{q,r})\bigr)
=
\frac{\pi^2}{2}
+\frac{\pi^4}{16}\bigl(\mathcal C(a_p,b_p)+\mathcal C(a_q,b_q)\bigr)r^4
+o(r^4)
\]
uniformly for $(a_p,b_p),(a_q,b_q)\in[1,R]^2$.  In particular, within this fixed distinct-corner branch, every minimizing sequence satisfies
\[
(a_p,b_p)\to(1,1),
\qquad
(a_q,b_q)\to(1,1).
\]
\end{proposition}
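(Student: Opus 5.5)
The plan is to combine Proposition~\ref{prop:boundary-simple-eigenvalue}, applied on the fixed square $D=Q$ with $N=1$ and $u=u_0/\|u_0\|_2=u_0$, with a localization of the $u$-capacity at the two separated corners. Here $\|u_0\|_{L^2(Q)}=1$, since each factor contributes $\int_{-1}^1\cos^2(\pi x/2)\,dx=1$. Set $K_r:=K_{p,r}\cup K_{q,r}$. The sets $K_r$ concentrate to the finite boundary set $\{p,q\}$.

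The hypotheses of Proposition~\ref{prop:boundary-simple-eigenvalue} are checked as follows. Mosco convergence of $H_0^1(Q\setminus K_r)$ to $H_0^1(Q)$ follows exactly as in the proof of Lemma~\ref{lem:boundary-potential-L2}: test functions in $C_c^\infty(Q)$ are eventually admissible, and weak limits lie in $H_0^1(Q)$ trivially. Smallness of $\|V_r\|_2$ is exactly Lemma~\ref{lem:boundary-potential-L2}, once $c_r\to0$ is known. Positivity of $c_r$ follows from~\eqref{eq:capacity-interior-lower}. Hence
\[
\lambda_1(Q\setminus K_r)=\tfrac{\pi^2}{2}+c_r+o(c_r),
\]
and everything reduces to showing
\[
c_r=\operatorname{Cap}_Q(K_r,u_0)=\tfrac{\pi^4}{16}\bigl(\mathcal C(a_p,b_p)+\mathcal C(a_q,b_q)\bigr)r^4+o(r^4)
\]
uniformly. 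Because the one-hole capacities are bounded below by $\mathcal C(1,1)>0$ via Proposition~\ref{prop:C-coordinatewise-monotone}, the $o(c_r)$ remainder becomes $o(r^4)$. Uniformity is obtained by the usual contradiction argument along sequences $r_n\to0$ with convergent parameters.

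The key step, and the one I expect to be the main obstacle, is additivity of the capacity. I will compare $c_r$ with the single-corner capacities $c_{p,r}:=\operatorname{Cap}_Q(K_{p,r},u_0)$ and $c_{q,r}$. By the one-hole corner analysis, namely double reflection, Lemma~\ref{lem:full-capacity-blowup}, \eqref{eq:reflection-capacity-identities}, and the quarter-restriction argument of Proposition~\ref{prop:M-equals-4E}, these equal $\frac{\pi^4}{16}\mathcal C(a,b)r^4+o(r^4)$. Two points need care. First, the reflection must convert the finite-domain capacity on $Q$ into one quarter of the capacity on $\widehat Q$ with the normalized $\widehat u_0/2$. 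Second, the factor $4$ from the quadrant restriction and the $(\pi^2/8)^2$ from the normalization combine to $\pi^4/16$.

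\emph{Upper bound.} Fix a small $\rho$ and cutoffs $\eta_p,\eta_q$ that equal $1$ on $B_\rho(p)$, are supported in $B_{2\rho}$, and have disjoint supports. Use $\eta_pV_{p,r}+\eta_qV_{q,r}$ as a competitor. It equals $u_0$ on both disks. The cutoff errors are controlled by $\|V\|_{L^2}\|\nabla V\|_{L^2}/\rho$, and Lemma~\ref{lem:boundary-potential-L2} gives $\|V\|_2^2=o(c)$, so these errors are $o(r^4)$.

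\emph{Lower bound.} I will show that the restriction of the joint potential $V_r$ to the region near $p$ has energy at least $c_{p,r}-o(r^4)$. To do this, modify $\eta_pV_r$ into an admissible competitor for $K_{p,r}$ alone; it is admissible because $\eta_p=1$ on $K_{p,r}$ and vanishes near $K_{q,r}$. Bound its extra energy by $\|V_r\|_2\|\nabla V_r\|_2/\rho=o(r^4)$, again by Lemma~\ref{lem:boundary-potential-L2}. Then, since the supports are disjoint,
\[
c_r\ge\int|\nabla(\eta_pV_r)|^2+\int|\nabla(\eta_qV_r)|^2-o(r^4).
\]
This inequality needs justification: the cross terms near the annuli must be small, which follows from $\int_{\rho<|x-p|<2\rho}|\nabla V_r|^2=o(r^4)$. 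That annular estimate is obtained from Caccioppoli's inequality, since $V_r$ is harmonic away from the holes, together with the $L^2$-smallness. This is the delicate estimate, and I would prove it first.

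Finally, the minimizing statement follows from strict coordinatewise monotonicity (Proposition~\ref{prop:C-coordinatewise-monotone}), continuity of $\mathcal C$, and uniformity of the remainder. If a subsequence had a parameter limit different from $(1,1)$, then comparing with the competitor $(1,1),(1,1)$ would give a positive $r^4$ gap, exactly as in the proof of Theorem~\ref{thm:one-hole-side-branch}.
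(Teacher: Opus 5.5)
Your proposal is correct and follows essentially the same route as the paper. The one-corner capacities come from double reflection and Lemma~\ref{lem:full-capacity-blowup}, additivity comes from disjoint cutoffs with the cross terms killed by $\|V\|_{L^2}=o(r^2)$, and Proposition~\ref{prop:boundary-simple-eigenvalue} finishes the argument. You take the $L^2$-smallness from Lemma~\ref{lem:boundary-potential-L2}, whereas the paper also derives it directly by a removable-point weak-limit argument.

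The Caccioppoli annular estimate you single out as the delicate step is not needed. Since $0\le\eta_p,\eta_q\le1$ have disjoint supports, $\eta_p^2+\eta_q^2\le1$, so $\int(\eta_p^2+\eta_q^2)|\nabla V_r|^2\le c_r$. The remaining cross terms are already $O(\rho^{-1}\|V_r\|_2\|\nabla V_r\|_2)=o(r^4)$.

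Also, $\operatorname{Cap}_Q(K_{p,r},u_0)$ is one quarter of the $\widehat Q$-capacity taken with $\widehat u_0$. That is, it equals, not one quarter of, the capacity taken with the normalized $\widehat u_0/2$, which is what your constant $\pi^4/16$ actually uses.
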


\begin{proof}
Put
\[
 A_{p,r}:=\operatorname{Cap}_Q(K_{p,r},u_0),
 \qquad
 A_{q,r}:=\operatorname{Cap}_Q(K_{q,r},u_0),
 \qquad
 A_{pq,r}:=\operatorname{Cap}_Q(K_{p,r}\cup K_{q,r},u_0).
\]
Odd reflection across the two sides meeting at a corner produces the four-disk full cell that defines $\mathcal C$.  Applying Lemma~\ref{lem:full-capacity-blowup} to the reflected normalized ground state and dividing the reflected energy by four gives the one-corner formula.  Thus, uniformly for parameters in $[1,R]^2$,
\begin{equation}\label{eq:individual-corner-capacities}
\begin{aligned}
 \operatorname{Cap}_Q(K_{p,r},u_0)
 &=A_{p,r}=\frac{\pi^4}{16}\mathcal C(a_p,b_p)r^4+o(r^4),\\
 \operatorname{Cap}_Q(K_{q,r},u_0)
 &=A_{q,r}=\frac{\pi^4}{16}\mathcal C(a_q,b_q)r^4+o(r^4).
\end{aligned}
\end{equation}
It remains to show
\begin{equation}\label{eq:capacity-additivity}
 A_{pq,r}=A_{p,r}+A_{q,r}+o(r^4).
\end{equation}

Choose disjoint relative neighborhoods $U_p,U_q$ of the two corners in $\overline Q$.  Take cutoffs
\[
 \eta_p,\eta_q\in C^\infty(\overline Q),
 \qquad
 0\le\eta_p,\eta_q\le1,
\]
such that $\eta_p=1$ near $p$, $\eta_q=1$ near $q$, their supports are contained in $U_p,U_q$, and therefore
\begin{equation}\label{eq:cutoff-square-sum}
 \eta_p^2+\eta_q^2\le1.
\end{equation}
Multiplication by these smooth functions preserves $H_0^1(Q)$.

Let $V_{p,r}$ be the capacitary potential for $K_{p,r}$.  Since $A_{p,r}=O(r^4)$, the family $r^{-2}V_{p,r}$ is bounded in $H_0^1(Q)$.  Any weak limit is harmonic on $Q\setminus\{p\}$.  The corner point has zero planar $H^1$-capacity, so the limit extends weakly across $p$ and is harmonic in all of $Q$.  Its boundary trace is zero; hence it is zero.  Compactness of $H_0^1(Q)\hookrightarrow L^2(Q)$ gives
\begin{equation}\label{eq:potential-away-zero}
 r^{-2}V_{p,r}\longrightarrow0\quad\text{strongly in }L^2(Q).
\end{equation}
The same statement holds for $V_{q,r}$.

For any cutoff $\eta$ and any $V\in H_0^1(Q)$,
\begin{equation}\label{eq:localized-energy-expansion}
 \int_Q|\nabla(\eta V)|^2
 =\int_Q\eta^2|\nabla V|^2
 +2\int_Q\eta V\nabla\eta\cdot\nabla V
 +\int_QV^2|\nabla\eta|^2.
\end{equation}
Using~\eqref{eq:potential-away-zero}, $\|\nabla V_{p,r}\|_2=O(r^2)$, and Cauchy--Schwarz, the last two terms are $o(r^4)$.  Therefore
\begin{equation}\label{eq:localized-individual-energy}
 \int_Q|\nabla(\eta_pV_{p,r})|^2\le A_{p,r}+o(r^4),
 \qquad
 \int_Q|\nabla(\eta_qV_{q,r})|^2\le A_{q,r}+o(r^4).
\end{equation}
For small $r$, the cutoffs equal one on their respective disks.  The function
$\eta_pV_{p,r}+\eta_qV_{q,r}$ has disjointly supported summands and agrees with $u_0$ on both holes.  It is admissible for the union capacity, so
\begin{equation}\label{eq:additivity-upper}
 A_{pq,r}\le A_{p,r}+A_{q,r}+o(r^4).
\end{equation}

Let $V_{pq,r}$ be the capacitary potential of the union.  The upper bound gives $\|\nabla V_{pq,r}\|_2=O(r^2)$.  Applying the same removable-point argument to $r^{-2}V_{pq,r}$ at the two points $p,q$ gives
\begin{equation}\label{eq:union-potential-away-zero}
 r^{-2}V_{pq,r}\longrightarrow0\quad\text{strongly in }L^2(Q).
\end{equation}
Since $\eta_pV_{pq,r}$ is admissible for $A_{p,r}$ and $\eta_qV_{pq,r}$ is admissible for $A_{q,r}$, expansion by~\eqref{eq:localized-energy-expansion} yields
\begin{align*}
 A_{p,r}+A_{q,r}
 &\le \int_Q|\nabla(\eta_pV_{pq,r})|^2
      +\int_Q|\nabla(\eta_qV_{pq,r})|^2\\
 &=\int_Q(\eta_p^2+\eta_q^2)|\nabla V_{pq,r}|^2+o(r^4)\\
 &\le A_{pq,r}+o(r^4),
\end{align*}
where the second line uses~\eqref{eq:union-potential-away-zero} and the last line uses~\eqref{eq:cutoff-square-sum}.  Together with~\eqref{eq:additivity-upper}, this proves~\eqref{eq:capacity-additivity}.

All estimates remain valid along arbitrary sequences of parameters in $[1,R]^2$.  By~\eqref{eq:individual-corner-capacities},~\eqref{eq:capacity-additivity}, and positivity of the continuous cell capacity on the compact parameter square,
\begin{equation}\label{eq:union-capacity-comparable-r4}
 A_{pq,r}\asymp r^4
\end{equation}
uniformly.  Since the union concentrates to the finite boundary set $\{p,q\}$, Lemma~\ref{lem:boundary-potential-L2} gives
\[
 \|V_{pq,r}\|_{L^2(Q)}=o(A_{pq,r}^{1/2});
\]
this also follows from~\eqref{eq:union-potential-away-zero} and~\eqref{eq:union-capacity-comparable-r4}.  Along every sequence $r_n\downarrow0$ and every accompanying parameter sequence in $[1,R]^2$, the two disks concentrate to the finite boundary set $\{p,q\}$, and the same recovery and weak-limit argument as in Lemma~\ref{lem:existence-continuity} gives
\[
 H_0^1\bigl(Q\setminus(K_{p,r_n}\cup K_{q,r_n})\bigr)
 \longrightarrow H_0^1(Q)
\]
in the Mosco sense.  Proposition~\ref{prop:boundary-simple-eigenvalue}, applied with $N=1$, now yields
\[
 \lambda_1\bigl(Q\setminus(K_{p,r}\cup K_{q,r})\bigr)
 =\frac{\pi^2}{2}+A_{pq,r}+o(A_{pq,r}).
\]
A contradiction-subsequence argument, using compactness of $[1,R]^4$, makes the remainder uniform.  Combining this formula with~\eqref{eq:individual-corner-capacities} and~\eqref{eq:capacity-additivity} proves the asserted expansion.  Finally, coordinatewise strict monotonicity of $\mathcal C$ forces both parameter pairs to converge to $(1,1)$.
\end{proof}

\begin{theorem}[Distinct compact corner layers]\label{thm:distinct-corner-branch}
Fix $R>1$.  Among configurations in $\mathcal C_r^{\rm cor}(R)$ whose holes occupy two distinct corner layers, every minimizing sequence is asymptotic, at the obstacle scale, to true corner tangency, and diagonally opposite corner pairs are strictly dominated by adjacent reflected pairs.
\end{theorem}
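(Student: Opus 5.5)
The plan is to combine Proposition~\ref{prop:separated-corner-additivity} with Corollary~\ref{cor:polarized-corner-layers}. The distinct-corner configurations in $\mathcal C_r^{\rm cor}(R)$ split into finitely many branches, one for each unordered pair of distinct corners $\{p,q\}$. For $r$ small the corner boxes are disjoint, so $|x_1-x_2|\ge2r$ holds automatically. Each branch is therefore exactly the product $[1,R]^2\times[1,R]^2$ of scaled parameters. Since there are finitely many branches, it suffices to argue branch by branch and along arbitrary sequences $r_n\downarrow0$.

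\emph{Asymptotic true-corner tangency.} Fix a branch $\{p,q\}$ and let $(a_{p,n},b_{p,n},a_{q,n},b_{q,n})$ be the parameters of a minimizing sequence, either for the branch or for the union of distinct-corner branches. First compare with the true-corner competitor $(1,1,1,1)$ in the same branch. The uniform expansion of Proposition~\ref{prop:separated-corner-additivity} then gives
\[
 \frac{\pi^4}{16}\bigl(\mathcal C(a_{p,n},b_{p,n})+\mathcal C(a_{q,n},b_{q,n})-2\mathcal C(1,1)\bigr)r_n^4\le o(r_n^4).
\]
Next pass to a convergent subsequence in the compact set $[1,R]^4$. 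Continuity of $\mathcal C$ (from Lemma~\ref{lem:full-capacity-blowup}) yields $\mathcal C(a_p^*,b_p^*)+\mathcal C(a_q^*,b_q^*)\le2\mathcal C(1,1)$. Proposition~\ref{prop:C-coordinatewise-monotone} gives $\mathcal C(a,b)\ge\mathcal C(1,1)$ on $[1,\infty)^2$, with equality only at $(1,1)$. Hence both limits equal $(1,1)$. Because every subsequential limit is the same, the full sequence converges, so the centers are the true-corner centers up to $o(r)$. If the minimizing sequence ranges over several branches, apply this argument along each subsequence that stays in a single branch.

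\emph{Opposite versus adjacent.} The second assertion is Corollary~\ref{cor:polarized-corner-layers}, transported by the symmetries of the square. For every configuration in an opposite-corner layer pair, reflecting one center across the appropriate axis gives an admissible configuration in an adjacent-layer pair with the same parameters and a strictly smaller eigenvalue. Therefore no minimizer over the distinct-corner class lies in an opposite branch.

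There is a subtlety: a pointwise strict inequality alone does not rule out minimizing sequences whose opposite-corner values approach the adjacent minimum at order $o(r^4)$. Indeed, the leading coefficients coincide, since both equal $2\mathcal C(1,1)$ at the optimum. I would therefore state domination exactly as the corollary gives it, namely strict domination of each opposite configuration by its reflected adjacent counterpart. Attainment from Lemma~\ref{lem:existence-continuity} then ensures that actual minimizers over the compact distinct-corner class lie in adjacent branches.

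The main obstacle is making sure the uniformity in Proposition~\ref{prop:separated-corner-additivity} covers moving parameters. This is needed so that the competitor comparison holds along sequences rather than only for fixed parameters. It also matters that strictness in Corollary~\ref{cor:polarized-corner-layers} is asserted for all parameters in $[1,R]^2$ once $r$ is below a threshold depending only on $R$.
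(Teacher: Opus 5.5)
Your proposal is correct and follows the paper's proof. Proposition~\ref{prop:separated-corner-additivity} gives convergence of both parameter pairs to $(1,1)$; you spell out its closing step via compactness, continuity of $\mathcal C$, and Proposition~\ref{prop:C-coordinatewise-monotone}. Corollary~\ref{cor:polarized-corner-layers} then gives strict domination of each opposite pair by its reflected adjacent pair. Your caveat that this pointwise strictness excludes actual minimizers, but not $o(r^4)$-near-minimizing sequences, from opposite branches is accurate and consistent with the paper's conclusion that only adjacent pairs can minimize.
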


\begin{proof}
Proposition~\ref{prop:separated-corner-additivity} forces both local parameter pairs to converge to $(1,1)$.  If the two occupied corners are diagonally opposite, Corollary~\ref{cor:polarized-corner-layers} produces, for each sufficiently small $r$, an adjacent-corner configuration with the same local offsets and strictly smaller eigenvalue.  Thus only adjacent corner pairs can minimize in the distinct-corner branch.
\end{proof}

\begin{theorem}[Compact corner-scale minimization]\label{thm:compact-corner-minimization}
Fix $R>1$, and let $(x_{1,r},x_{2,r})$ minimize $\Lambda_r$ over $\mathcal C_r^{\rm cor}(R)$.  Then, after relabeling the centers and applying a symmetry of the square if necessary,
\[
 x_{1,r}=(-1+r,1-r)+o(r),
 \qquad
 x_{2,r}=(1-r,1-r)+o(r).
\]
Moreover,
\[
 \min_{\mathcal C_r^{\rm cor}(R)}\Lambda_r
 =\frac{\pi^2}{2}
 +\frac{\pi^4}{8}\mathcal C(1,1)r^4
 +o(r^4).
\]
\end{theorem}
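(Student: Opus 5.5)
Since $\mathcal C_r^{\rm cor}(R)$ is a finite union of compact branches, I would split configurations according to which corner layers the two centers occupy. Either both lie in the same layer $\mathcal N_r^R(p)$ (four choices, all equivalent by symmetry), or they lie in two adjacent layers, or in two diagonally opposite layers. The plan is to obtain the leading $r^4$ coefficient on each branch, uniformly in the scaled parameters, and then compare the branches. I would write the scaled centers as $c_i=(a_i,b_i)\in[1,R]^2$.

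First, the upper bound. I would take the explicit competitor with both disks tangent at the two top corners, $(a,b)=(1,1)$ at each. Proposition~\ref{prop:separated-corner-additivity} then gives the value $\pi^2/2+\frac{\pi^4}{8}\mathcal C(1,1)r^4+o(r^4)$. Hence
\[
 \min_{\mathcal C_r^{\rm cor}(R)}\Lambda_r\le \frac{\pi^2}{2}+\frac{\pi^4}{8}\mathcal C(1,1)r^4+o(r^4).
\]
For the matching lower bound on the distinct-corner branches, Proposition~\ref{prop:separated-corner-additivity} gives the coefficient $\frac{\pi^4}{16}(\mathcal C(a_p,b_p)+\mathcal C(a_q,b_q))$ uniformly. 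Coordinatewise monotonicity (Proposition~\ref{prop:C-coordinatewise-monotone}) shows that this is at least $\frac{\pi^4}{8}\mathcal C(1,1)$.

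The same-corner branch is the delicate one. Here the admissible set of scaled pairs, $\{(c_1,c_2)\in\mathcal A: c_i\in[1,R]^2\}$, is compact. Proposition~\ref{prop:same-corner-small-hole-asymptotic} together with Theorem~\ref{thm:same-corner-branch-complete} gives a uniform positive gap $\kappa r^4$ above the adjacent competitor. So for small $r$ no minimizer lies in the same-corner branch, and I would restrict attention to distinct corners.

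Next, localization of the minimizer. I would argue by contradiction along a sequence $r_n\to0$ and pass to a subsequence so that the occupied pair of corners is fixed (up to symmetry) and the scaled parameters converge. If the corners are diagonally opposite, Corollary~\ref{cor:polarized-corner-layers} gives an adjacent configuration in $\mathcal C_r^{\rm cor}(R)$ with strictly smaller eigenvalue, contradicting minimality. So the corners are adjacent, and after a symmetry they are the top-left and top-right corners. If some limit parameter pair is not $(1,1)$, strict monotonicity of $\mathcal C$ makes the leading coefficient exceed $2\mathcal C(1,1)$ by a positive amount. The uniform expansion of Proposition~\ref{prop:separated-corner-additivity} then contradicts the upper bound. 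Therefore $(a_i,b_i)\to(1,1)$, which is exactly $x_{1,r}=(-1+r,1-r)+o(r)$ and $x_{2,r}=(1-r,1-r)+o(r)$. The minimum asymptotic follows by combining the upper bound with the lower bounds on all branches.

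The main obstacle I expect is uniformity. The same-corner exclusion needs a uniform gap over the compact scaled set, including tangent pairs. In addition, the $o(r^4)$ remainders of the distinct-corner expansions must be uniform across the finitely many corner pairs. I would handle both by subsequence-and-contradiction arguments resting on the compact-uniform statements already proved.
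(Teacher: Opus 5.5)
Your proposal is correct and follows the paper's proof essentially step for step. Same-corner clusters are excluded by the uniform gap of Theorem~\ref{thm:same-corner-branch-complete} against the adjacent true-corner competitor, and diagonally opposite pairs by the polarization Corollary~\ref{cor:polarized-corner-layers}; the scaled parameters are then forced to $(1,1)$ by the uniform expansion of Proposition~\ref{prop:separated-corner-additivity} together with strict coordinatewise monotonicity of $\mathcal C$. The only cosmetic difference is that you get the minimum asymptotic from an explicit branch-by-branch lower bound rather than by evaluating the expansion along the minimizer, which amounts to the same argument.
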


\begin{proof}
Passing to a subsequence, the occupied corner or corners are fixed and the scaled parameters converge in the compact set $[1,R]^4$.

If both holes occupy the same corner layer, Theorem~\ref{thm:same-corner-branch-complete} shows that their leading coefficient is strictly larger than that of two independent true-corner cells, contradicting minimality against an adjacent true-corner competitor.  Hence the holes occupy distinct corner layers.  Theorem~\ref{thm:distinct-corner-branch} then forces both scaled parameter pairs to converge to $(1,1)$ and excludes a diagonally opposite pair by exact polarization.  After relabeling and applying a symmetry of the square,
\[
 x_{1,r}=(-1+r,1-r)+o(r),
 \qquad
 x_{2,r}=(1-r,1-r)+o(r).
\]
The expansion follows from Proposition~\ref{prop:separated-corner-additivity}, since the two limiting one-hole coefficients are both $\mathcal C(1,1)$.
\end{proof}

\begin{proof}[Proof of Theorem~\ref{thm:main-global}]
Let $(x_{1,r},x_{2,r})$ be a global minimizer over $\mathcal C_r$.  An adjacent true-corner pair is admissible and Proposition~\ref{prop:separated-corner-additivity} gives
\[
 \Lambda_r(x_{1,r},x_{2,r})
 \le
 \frac{\pi^2}{2}
 +\frac{\pi^4}{8}\mathcal C(1,1)r^4
 +o(r^4).
\]
Hence, for some fixed $C>0$ and all sufficiently small $r$,
\[
 \Lambda_r(x_{1,r},x_{2,r})-\frac{\pi^2}{2}\le Cr^4.
\]
Corollary~\ref{cor:two-hole-localization} provides a fixed $R>1$ such that every global minimizer belongs to $\mathcal C_r^{\rm cor}(R)$ for all sufficiently small $r$.  Since a global minimizer also minimizes over this subclass, Theorem~\ref{thm:compact-corner-minimization} applies and gives both the asserted locations and the expansion of the global minimum.
\end{proof}

\section{Conclusion}

We have proved the unrestricted small-radius minimization theorem for two equal hard obstacles in a square.  The global step is a capacitary localization principle: an individual disk whose eigenvalue cost is only $O(r^4)$ cannot remain in the interior, approach an open side, or escape along an intermediate boundary scale.  The full $u$-capacity contains the energy of the limiting eigenfunction inside the reflected disks, and near a corner this yields the coercive lower bound
\[
 \operatorname{Cap}\gtrsim r^2(s^2+t^2+r^2).
\]
Thus every obstacle in any two-hole configuration whose eigenvalue excess above $\pi^2/2$ is $O(r^4)$ lies in a fixed $O(r)$ corner layer.

Inside those layers, the corrected leading coefficient is the full $u$-capacity, including both the exterior harmonic corrector energy and the polynomial energy inside the reflected disks.  After normalization of the reflected eigenfunction, a single corner cell contributes $\pi^4\mathcal C(a,b)r^4/16$.  Coordinatewise monotonicity forces asymptotic true-corner tangency, the direct certified two-hole capacity gap excludes same-corner clusters, and separated-corner capacities add at leading order.  Exact polarization then rules out diagonally opposite pairs in favor of adjacent ones.  Consequently, every unrestricted global minimizer is asymptotic to true-corner tangency at two adjacent corners, and the global minimum has the expansion stated in Theorem~\ref{thm:main-global}.

Natural continuations include unequal radii, rectangles, more than two holes, and general polygonal corner angles.  The finite element computations in Section~\ref{sec:numerics} are consistent with the representative branch ordering and are reproducible through the archived dataset~\cite{Zhang2026ZenodoFEM}.

\appendix
\section{Exact-arithmetic verification of the same-corner gap}\label{app:certificate}

This appendix records the reproducibility conventions for Proposition~\ref{prop:same-corner-certified-gap}.  The source archive contains the standard-library Python program
\texttt{same\_corner\_certificate.py}.  It performs all computations with rational intervals; the only irrational constants are $\pi$ and square roots of rational numbers.  Machin's formula with alternating-series remainders encloses $\pi$, while integer square roots of scaled integers give outward rational enclosures of square roots.

The polynomial $\underline G_{10}$ is not stored as a list of decimal or precomputed coefficients.  After the substitution $y=x+v/2$, the program expands exactly
\[
 \underline G_{10}(v)
 =\int_{v-1}^{1}(1+y)^2
 \left[
 \frac{10(\sqrt{319}-20)}{81}v^2+q_{10}(y)+q_{10}(y-v)
 \right]dy
\]
in the field $\mathbb Q(\sqrt{319})$, obtaining a polynomial of degree $23$.  Interval Horner evaluation on rational subdivisions then proves the four bounds in~\eqref{eq:G10-piecewise}.  The six rows of Table~\ref{tab:same-corner-certificate} are evaluated from the displayed exact formulas, and the one-hole trial estimate is checked directly from~\eqref{eq:E11-trial-main}--\eqref{eq:C11-corrected-upper}.  The final rational conclusions are
\[
 \mathcal C_2(c_1,c_2)>\frac{1169}{25},
 \qquad
 2\mathcal C(1,1)<\frac{931}{20}.
\]

The certificate uses no third-party package and exits by assertion failure if any enclosure is not proved.  It was tested with Python~3.13.5.  The SHA-256 digest of the archived script is
\begin{center}
\ttfamily ddbc6ae11b31abdf80003ddb8611843624770b1a97dd94ee9f49317130bc031b.
\end{center}
Running
\begin{center}
\texttt{python same\_corner\_certificate.py}
\end{center}
reconstructs the polynomial, verifies every interval inequality, and prints the certified chain
\[
 \mathcal C_2>\frac{1169}{25}>\frac{931}{20}>2\mathcal C(1,1).
\]

\paragraph*{Data and code availability}
The finite element code, numerical data, and figures used in Section~\ref{sec:numerics} are archived in the Zenodo dataset~\cite{Zhang2026ZenodoFEM}.  The exact-arithmetic certificate used in the same-corner comparison is included in the manuscript source package together with execution instructions and its SHA-256 digest.

\paragraph*{Acknowledgments} This work was co-funded by the Czech Science Foundation (GAČR), Grant No.~25-16847S, and by the University of Ostrava, Grant No.~SGS05/PŘF/2026.

\bibliography{references}

\end{document}